\documentclass[11pt]{article}
\usepackage{fullpage}
\usepackage{amsmath, amsthm, amssymb}
\usepackage[textsize=small]{todonotes}
\setlength{\marginparwidth}{2.5cm}
\usepackage[hyperfootnotes=false]{hyperref}
\usepackage[capitalize]{cleveref}
\usepackage{natbib}

\usepackage{ulem}
\usepackage{xr}

\input{macro}

\crefrangeformat{assumption}{\textup{#3(#1)#4--#5(#2)#6}}
\crefmultiformat{assumption}{Assumptions~#2#1#3}{ and~#2#1#3}{, #2#1#3}{ and~#2#1#3}
\crefrangeformat{assumption}{Assumptions~#3#1#4--#5#2#6}
\Crefrangeformat{assumption}{Assumptions~#3#1#4--#5#2#6}

\theoremstyle{plain}

\newtheorem{theorem}{Theorem}[section]
\newtheorem{lemma}[theorem]{Lemma}
\newtheorem{proposition}[theorem]{Proposition}
\newtheorem{corollary}[theorem]{Corollary}

\theoremstyle{definition}

\newtheorem{assumption}{Assumption}
\newtheorem*{remark}{Remark}

\newcommand{\rev}[1]{#1}
\newcommand{\diag}{\mathrm{diag}}

\title{Hadamard--Langevin dynamics for sampling the $\ell_1$-prior}

\author{
  Ivan Cheltsov\thanks{University of Bath. \texttt{ic488@bath.ac.uk}} \and
  Federico Cornalba\thanks{University of Bath. \texttt{fc402@bath.ac.uk}} \and
  Clarice Poon\thanks{University of Warwick. \texttt{clarice.poon@warwick.ac.uk}} \and
  Tony Shardlow\thanks{University of Bath. \texttt{t.shardlow@bath.ac.uk}}
}

\date{}

\begin{document}

\maketitle

\begin{abstract}
Priors with non-smooth log-densities, such as the $\ell_1$-prior, are widely used in Bayesian
inverse problems for their sparsity-inducing properties. Existing Langevin-based sampling methods
typically rely on proximal mappings or smooth approximations, which alter the target distribution.
We propose an alternative approach based on a Hadamard product parameterization of the
$\ell_1$-norm, leading to a smooth but nonconvex and non--globally Lipschitz potential whose
marginal law exactly recovers the desired posterior. The resulting Hadamard--Langevin dynamics
(HLD) defines a diffusion process that is analytically distinct from proximal or mirror-type
Langevin schemes. Our main contribution is a rigorous well-posedness theory for both the
continuous and discrete HLD. We establish existence and uniqueness of strong solutions, geometric
ergodicity of the continuous dynamics, and convergence of the discretized scheme as the step size
tends to zero. These results provide the first theoretical foundation for sampling from nonconvex,
nonsmooth posteriors through overparameterized Langevin dynamics.
\end{abstract}

\noindent\textbf{Keywords:} Langevin equations, Lasso, Sampling algorithms, Sparsity


\section{Introduction}

Sampling from sparsity-promoting distributions is a central task in high-dimensional Bayesian inference and inverse problems. A prototypical example is
\begin{align}  
\rho(x)=
\frac 1 Z\exp\bigl(-\beta\bigl(\lambda \|x\|_1 + G(x)\bigr)\bigr),\qquad x\in \mathbb{R}^d,\label{target}
\end{align}
where  $Z$ is a normalization constant, $\beta>0$ is the inverse temperature, $\lambda$ is a regularization parameter, and $G\colon\real^d\to\real$ represents the data likelihood. Due to the non-smoothness of the $\ell_1$-term, existing Langevin-type methods typically rely on smoothing techniques such as the Moreau--Yosida envelope or proximal mappings.

In this work, we propose and theoretically analyse the Hadamard--Langevin dynamics (HLD) --- a novel smooth overparameterized dynamics whose marginal stationary distribution exactly recovers $\rho$ without smoothing and bias. 
The key idea is to exploit the Hadamard representation of the $\ell_1$-norm,
\begin{align}\label{hadamard}
\norm{x}_1 = \sum_i \abs{x_i} = \min_{u\odot v = x} \left\{\frac12 \norm{u}^2_2 + \frac12 \norm{v}^2_2\right\}.
\end{align}
\rev{By replacing the $\ell_1$ term, the log density in \cref{target} is recast as a smooth but non-concave function in $(u,v)$ due to the Hadamard product $u\odot v=x$.} This representation, which has recently garnered interest in optimization to study implicit regularization and to develop efficient optimization solvers, has not yet been explored from a sampling perspective. We demonstrate that applying Langevin dynamics in the lifted space $(u,v)$ leads to a well-defined diffusion process whose invariant measure projects exactly onto $\rho$.

\subsection{The Hadamard--Langevin dynamics}

Our starting point is the classical Gaussian-mixture identity underlying the hierarchical Bayesian formulation of the Laplace prior (\cite{west1987scale}),
\begin{equation}\label{eq:laplace}
   \exp\pa{-a\abs{z}} = \frac{a}{ \sqrt{2\pi}}\int_0^\infty \frac{1}{\sqrt{\eta}} \exp\pa{ -\frac{z^2}{2 \eta} - \frac{a^2 \eta}{2}} d\eta, \qquad a\in \real_+,\,\, z\in\real.
\end{equation}
Introducing latent variables $\eta_i>0$ for each coordinate of $x$, one can rewrite the target distribution~\eqref{target} as the marginal of a joint law over $(x,\eta)\in\mathbb{R}^d\times\mathbb{R}_+^d$:
\begin{equation}\label{eq:gibbs}
    \tilde \pi(x,\eta) \propto \frac{1}{\prod_{i=1}^d \sqrt{\eta_i}} \exp\Biggl( - \sum_{i=1}^d\left(\frac{x_i^2}{2\eta_i} + \frac{\beta^2 \lambda^2 \eta_i }{2 }\right) - \beta \,G(x)\Biggr).
\end{equation}

Since  sampling from $\tilde\pi$ yields the correct marginal $\rho(x)$,
\cite{park2008bayesian} proposed a Gibbs sampler where one alternates between sampling $x$ and $\eta$. However, such works typically restrict to the case where $G$ is quadratic, so sampling $x$ corresponds to sampling a Gaussian, and this in turn requires inverting a potentially high-dimensional matrix.
In contrast, as we explain next, we focus on a reparameterized formulation of the above, which will allow us to derive a Langevin algorithm, under general assumptions on the data term $G$.

\subsubsection{From hierarchical models to Hadamard overparameterization}\label{sec:intro_hadamard}
A key observation is that the Gaussian mixture representation can be viewed as the stochastic analogue of the quadratic variational form for the $\ell_1$-norm.
By applying  a change of variables of  $$x = u \odot v,\quad \eta = u \odot u /(\lambda \beta)$$ to \eqref{eq:gibbs}, we obtain  the Hadamard parametrized joint law 
\begin{align}
\pi(u,v)=\frac{1}{Z_\pi} \prod_{i=1}^d  u_i\, \exp\pp{-\beta\, \pp{\frac12\lambda \|(u,v)\|^2+G(u\odot v)} }\label{pi}
\end{align}
over $(u,v)\in \mathcal X\eqdef \real_+^d\times \mathbb {R}^d$ for normalization constant $Z_\pi$ and $\norm{(u,v)}^2=\norm{u}^2+\norm{v}^2$. The Hadamard  product $x=u\odot v$ follows the same distribution $\rho$ defined in \cref{target}. This is formalized in the following statement.

\begin{theorem}\label{thm:overparam}If $(u,v)\sim \pi$ as defined in~\cref{pi}, then $x=u \odot v\sim\rho$ as defined in~\cref{target}. In particular, for any smooth $\phi:\mathbb{R}^d \xrightarrow[]{} \mathbb{R}$,
    \[\int_{\mathcal X} \phi(u \odot v)\, \pi(u,v)\,du\,dv = \int_{\mathbb{R}^
    d} \phi(x) \,\rho(x)\,dx.\]
    Furthermore, the normalization factors $Z_\pi$ and $Z$ of $\pi$ and $\rho$, respectively, satisfy
 $\frac{1}{Z} = \frac{1}{Z_\pi} \left(\sqrt{\frac{\pi}{2\beta \lambda}}\right)^d.$
\end{theorem}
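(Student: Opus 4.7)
The plan is to verify the claimed identity directly by computing the pushforward of $\pi$ under $(u,v)\mapsto u\odot v$. First I would change variables from $(u,v)\in\real_+^d\times\real^d$ to $(u,x)$ via $x_i = u_iv_i$; at fixed $u$ this is linear in $v$ with $v_i = x_i/u_i$ and Jacobian factor $\prod_i u_i$, so $dv = \prod_i u_i^{-1}\,dx$. Crucially, this Jacobian cancels the explicit $\prod_i u_i$ appearing in the density~\cref{pi} --- which is the reason the parameterization was set up this way.

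After substitution, the remaining integrand factorizes across coordinates, and verification reduces to evaluating, for each $i$, the one-dimensional inner integral $\int_0^\infty \exp\bigl(-\tfrac{\beta\lambda}{2}(u_i^2+x_i^2/u_i^2)\bigr)\,du_i$. This is precisely the integral already identified in~\cref{eq:laplace}: via the substitution $\eta = u_i^2$, applied with $a=\beta\lambda$ and $z=x_i$, identity~\cref{eq:laplace} gives the closed form $\sqrt{\pi/(2\beta\lambda)}\,\exp(-\beta\lambda|x_i|)$.

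Multiplying the $d$ coordinate factors produces $(\pi/(2\beta\lambda))^{d/2}\exp(-\beta\lambda\|x\|_1)$, so the pushforward integral collapses to
$$\int_{\mathcal X}\phi(u\odot v)\pi(u,v)\,du\,dv = \frac{1}{Z_\pi}\left(\frac{\pi}{2\beta\lambda}\right)^{d/2}\int_{\real^d}\phi(x)\exp(-\beta(\lambda\|x\|_1+G(x)))\,dx,$$
which identifies the law of $u\odot v$ with $\rho$. Specializing to $\phi\equiv 1$ then reads off the claimed relation between $Z$ and $Z_\pi$.

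I do not anticipate a genuine obstacle: the argument is essentially a bookkeeping exercise built on the Gaussian-mixture identity~\cref{eq:laplace}. The only points warranting care are (i) confirming that the Jacobian of the change of variables cancels the $\prod_i u_i$ factor in $\pi$ exactly, and (ii) tracking the $\sqrt{2\pi}/a$ constants from~\cref{eq:laplace} through $d$ coordinates to recover the precise expression for $Z/Z_\pi$.
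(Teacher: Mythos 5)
Your proposal is correct and follows essentially the same route as the paper, which obtains $\pi$ from the augmented density \cref{eq:gibbs} via the change of variables $x=u\odot v$, $\eta\propto u\odot u$ and the Gaussian-mixture identity \cref{eq:laplace}; you simply run the same Jacobian cancellation and one-dimensional integral in the verification direction. One bookkeeping note: to match \cref{eq:laplace} to $\int_0^\infty\exp\bigl(-\tfrac{\beta\lambda}{2}(u_i^2+x_i^2/u_i^2)\bigr)\,du_i$ under $\eta=u_i^2$ you need $a=\sqrt{\beta\lambda}$ and $z=\sqrt{\beta\lambda}\,x_i$ (not $a=\beta\lambda$, $z=x_i$), but the closed form $\sqrt{\pi/(2\beta\lambda)}\,e^{-\beta\lambda|x_i|}$ you state is exactly right, so the conclusion and the constant relating $Z$ and $Z_\pi$ are unaffected.
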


Note that the log-density is smooth except for $\log(u_i)$ terms, which we show can be handled easily.   It is therefore natural to derive Langevin dynamics (see \eqref{main}) to sample $\pi$, and recover  samples of $\rho$ by evaluating $u\odot v$.

\subsection{Main contributions}\label{sec:contri}
By treating $\rho$ using the Hadamard parameterization to obtain a differentiable log-density, we demonstrate that we can apply Langevin sampling methods to good effect. We provide a complete theoretical analysis of the Hadamard--Langevin dynamics by establishing well-posedness and stability of the continuous and discretized dynamics. 
\begin{enumerate}
    \item We establish the well-posedness on $\mathcal{X} = \real_+^d\times \mathbb {R}^d$ of the Langevin system  associated with $\pi$ given in \eqref{pi}. This includes geometric convergence to the invariant measure (\cref{sec:cont}). We present two approaches: The first approach removes the singularity in the $\log(u)$-term via a Cartesian change of coordinates, to transform our drift into a locally bounded term.  The second approach  exploits connections of our Langevin system to the well-studied Cox--Ingersoll--Ross and Ornstein--Uhlenbeck  processes  via a Girsanov transformation. The Girsanov transformation also ensures that $u_t$ will be positive almost surely.
    \item We develop a numerical scheme to approximate the Langevin system and show the convergence of the numerical scheme to $\pi$ (\cref{sec:disc}). We establish strong convergence of the method as well as discuss convergence of the stationary distribution of the numerical method. One of the challenges in the analysis is that the negative log-density is neither convex nor globally Lipschitz; despite this, we are able to prove geometric ergodicity of the numerical scheme by relying on two tools, namely a coupling technique for different realisations of the numerical method inside  sufficiently large balls, and a rapidly-decaying bound for the escape probability from such balls.

\end{enumerate}
As mentioned,  the Gaussian mixture formula \cref{eq:laplace} can be seen as the equivalent of the quadratic variational form for sampling and 
our work can be seen as a first attempt to deploy this trick for Langevin dynamics. Even though we restrict to sampling the $\ell_1$-prior, as discussed in the conclusion, we believe that our work offers a new perspective on sampling sparse priors and future work would aim to generalize this to other structure-inducing priors.

\subsection{Related background}

\subsubsection*{Quadratic variational forms}

It is well-known that many of the sparsity-inducing regularizers, and in particular, the $\ell_1$-norm, have \textit{quadratic variational forms} \citep{black1996unification,bach2012optimization}. In particular, if the regulariser $R(x) = \phi(x\odot x)$ for $x\in\real^d$ and  a concave function $\phi$, one can rewrite  using quadratic variational forms 
\begin{equation}\label{eq:eta}
R(x) = \inf_{\eta\in\real^d_+} \sum_{i=1}^d \frac{x_i^2}{\eta_i} + \psi(\eta),
\end{equation}
for some function $\psi$. 
This includes the $\ell_q$ (semi)norms for $q\in (0,2]$ and various nonconvex regularizers such as the Lorentzian function \citep{black1996unification}; a similar formula also holds for functionals on matrices, such as the nuclear norm \citep{bach2012optimization}. 
In the case of $R(x) = \norm{x}_1$, one can choose $\psi(\eta) = \eta/4$. 

This idea has been exploited to good effect in optimization, and the computer-vision community found the new variable $\eta$  can help increase the curvature of a `flat' landscape and has led to graduated non-convexity algorithms \citep{black1996unification}.
This lifting of the $\ell_1$-norm to the $x$ and $\eta$ variables is the basis of  iteratively reweighted least-squares algorithms \citep{daubechies2008iteratively}, where one alternates between solving least squares problems on $x$ and exact solves on $\eta$. 

\subsubsection*{The Hadamard product parameterization}
More recently, there has been renewed interest in the Hadamard form of these quadratic variational formulas \eqref{eq:eta}, particularly in the context of convex structure-inducing regularizers.
For $\psi(\eta) = \eta/4$, by making a change of variable with $x = u\odot v$ and $\eta/2 = u\odot u$, one can equivalently write \eqref{eq:eta} as $ \norm{x}_1 = \inf_{x = u\odot v} \{\frac12\norm{u}^2 + \frac12\norm{v}^2$\} where $\norm{\cdot}$ is the Euclidean 2-norm.
While the formulation in \eqref{eq:eta} retained convexity in both variables $x$ and $\eta$,
this Hadamard product-like formula is no longer convex but is now smooth; see \cref{fignew}. Consequently, one can make use of   powerful smooth optimization solvers such as quasi-Newton methods. One of the first works that replace the $\ell_1$-norm with the Hadamard product is the work of \cite{hoff2017lasso}; see also \cite{Poon2023-av,poon2021smooth} for further discussions on exploiting these forms for optimization. In the case of the $\ell_1$-norm, one can show that this nonconvexity is in fact benign \citep{poon2021smooth}. Furthermore, it can be shown that gradient descent with the Hadamard parameterization is implicitly linked to mirror descent with a hyperbolic entropy potential \citep{Poon2023-av}. This points to a potential benefit of such parameterizations for modern optimization when working on high-dimensional and ill-conditioned problems, where \cite{chizat2022convergence} investigated the superior convergence behaviour of such mirror descent algorithms in the context of sparse measures.
Finally, over-parameterization and its implications for implicit bias in gradient-descent training of neural networks have also been of wide interest in the machine learning community. In particular, the Hadamard product parameterization, even without explicit regularization, causes gradient descent to bias towards sparse solutions \citep{vaskevicius2019implicit,chou2023more}.  

\subsubsection*{Unadjusted Langevin algorithms (ULA) and Moreau--Yosida envelopes}\label{mye}

 Langevin dynamics is a simple way to sample from the posterior distribution $\rho(x)$ \citep{Roberts1996-je}. 
Unadjusted Langevin Algorithm (ULA), in the case where $\rho(x) \propto\exp(-\beta H(x))$ for a \textit{smooth} potential $H\colon \real^d\to \real$, is the update rule 
$
x^{n+1} = x^n -  \nabla H(x^n)\tstep + \sqrt{2\tstep/\beta} \,\xi^{n},
$
for independent samples $\xi^{n} \sim\Nn(0,\Id_d)$ and  time step $\tstep>0$. This can be interpreted as a discretization $x^n\approx x_{n\tstep}$ of the overdamped Langevin stochastic differential equation (SDE):
$
dx_t = - \nabla H(x_t)\,dt + \sqrt{2/\beta}\,dW_t,
$
where 
$W_t$ is a standard $\real^d$-valued Wiener process. 

In the case where $H(x) = G(x) + R(x)$ and $R$ is non-smooth  (as in our case, see \eqref{target}), a canonical approach is to replace $R$ by a smooth approximation defined by the Moreau--Yosida envelope $R_\gamma$ \citep{pereyra2016proximal}:
$
R_\gamma(x)\eqdef \min_{z\in \RR^d} (R(z) + \frac{1}{2\gamma}\norm{x-z}^2).
$
The function $R_\gamma$ is then $1/\gamma$-smooth and converges pointwise to $R$ as $\gamma \to 0$. 
One can then directly apply ULA to $H_\gamma\eqdef G + R_\gamma$, leading to the Moreau--Yosida regularized Unadjusted Langevin Algorithm (MYULA). Convergence to the stationary distribution requires one to take $\gamma \to 0$ \citep{durmus2018efficient}. Under suitable conditions, such as convexity of $G$ and Lipschitz smoothness, convergence results like geometric ergodicity of the stationary distribution   $\rho_\gamma \propto \exp(-\beta H_\gamma)$ have been established, as well as the convergence of $\rho_\gamma$ to $\rho$ as $\gamma \to 0$. 
While early works replace $R$ by its Moreau envelope, \cite{durmus2019analysis} removes this approximation bias by directly incorporating the proximal operator of $R$.
 Another line of work that avoids the bias from the Moreau-envelope smoothing directly samples from $H$ via subgradient Langevin \citep{fruehwirth2024ergodicity} in the case where $H$ is strongly convex. In general, smoothing by the Moreau envelope allows one to leverage all existing theory  for smooth Langevin dynamics, while more recent works  derive cleaner algorithms and proofs by heavily relying on techniques from convex analysis.

\begin{figure} 
\includegraphics[width=\linewidth]{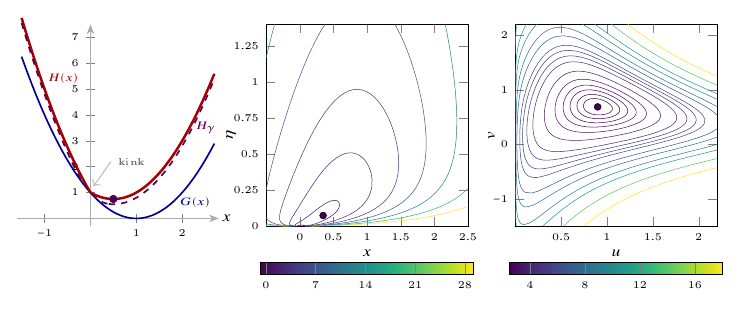}

\caption{Left: potential functions $G(x)$ (blue), $H(x)$ with non-smooth regulariser $R(x)=\lambda|x|$ (red with kink at $x=0$),  and $H_\gamma(x)$ with the smooth Moreau regulariser  (dashed).  Centre: contours of the potential $\frac1{2\beta} \log\eta +x^2/2\beta\eta+\beta \lambda^2 \eta/2+ G(x)$ corresponding to \cref{eq:gibbs}. Lifting to $(x,\eta)$-space via  the Gaussian mixture representation yields a smooth joint density, but with highly anisotropic contours. Right: contours of the potential $-(1/\beta)\log u+\frac12\lambda(u^2+v^2)+G(uv)$ after the Hadamard change of variable. 
The Hadamard parameterization produces a smooth, well-conditioned potential whose Langevin dynamics are  tractable.}\label{fignew}
\end{figure}

\section{Continuous dynamics}\label{sec:cont}
We work with  $G$ under the following assumptions.

\begin{assumption}[data term $G$]\label{reg1} Assume that (i) $G$ is bounded below, that (ii) $G\colon \real^d\to \real$ is continuously differentiable, that  (iii) $x^\top\nabla G(x)$ is bounded below (so $x^\top\nabla G(x) \geq -\clower$ for some $\clower$), and that (iv)  $\|\nabla G(x)\|_\infty$ is bounded uniformly for $x\in \real^d$ (so $\cinfty := \sup_x\|\nabla G(x)\|_\infty < \infty$).
\end{assumption}

The data term $G$ is bounded below so that $\rho$ in \eqref{target} is a probability measure for some normalization constant $Z>0$. Note that (iii) holds if $G$ is a convex function. Important examples that satisfy (i), (ii) and (iii) include the quadratic loss $G(x) = \norm{ A x - y}^2$ for an $m\times d$ matrix $A$ and $y\in \real^m$; the logistic loss $G(x)=\sum_{i=1}^m \log( 1+ \exp(-y_i A_i^\top x) )$ for $A_i\in\real^d$; and smooth approximations of the unit loss,  such as $G(x) = \sum_i \sigma(y_i A_i^\top x)$ for $\sigma(x)=\arctan(x)$ or $\sigma(x)=1/(1+\exp(x))$.
Note that  $G(x) = \sum_i \sigma(y_i A_i^\top x)$  satisfies condition (iv) for any function $\sigma$ with bounded derivatives, which includes the logistic loss and smooth versions of the unit loss.  The least-squares loss $G(x)=\norm{Ax-y}^2$ does not satisfy condition (iv) on the boundedness of $\nabla G$, and one might consider instead the Huber loss. However, in our numerical computations, the method behaves well with the quadratic loss, so assumption (iv) is likely a limitation of our theoretical analysis rather than the method itself.

\subsection{Langevin sampling}
We will use Langevin sampling to produce samples from $\pi$. Write $\pi\propto\exp(-\beta\, H)$ for $H(u,v)=\frac 12 \,\lambda\, \norm{(u,v)}^2 + G(u\odot v)- \frac{1}{\beta}\,\sum_i\log u_i$. The gradient is
$
\nabla H(u,v)=\lambda\begin{pmatrix}
    u\\v
\end{pmatrix}
+\begin{pmatrix}
    v\\ u
\end{pmatrix}\odot \nabla G(u\odot v)
-\begin{pmatrix}
    1/\beta u\\ 0
\end{pmatrix},
$
where we interpret $1/u\in\real^d$ as the vector with entries $1/u_i$. Therefore, the corresponding Langevin equations are 
\begin{equation}
\begin{split}
du_t&=-\lambda\,   u_t\,dt - v_t\odot \nabla G(u_t\odot v_t)\,dt
 +\frac{1}{\beta \,u_t}\,dt+ \sqrt{2/\beta}\,dW^1_t,\\
dv_t&=-\lambda\,  v_t\,dt -  \,u_t \odot \nabla G(u_t\odot v_t)\,dt+  \sqrt{2/\beta}\,dW^2_t,
\label{main}
\end{split}
\end{equation}
for independent $\real^d$-valued Brownian motions $W^1_t$ and $W^2_t$.  
It is well known that 
  $\pi$ given in \eqref{pi} is an invariant distribution of \cref{main} and we explore ergodicity in \cref{erg}. We will show that \cref{main} is well posed on $\mathcal{X}=\real_+^d\times \mathbb R^d$.

The variables $(u,v)$ in \cref{main} can be viewed as the cylindrical co-ordinates of an axially symmetric distribution $\rho(t,(u,v))$ with axial co-ordinate $v\in\real^d$ and radial co-ordinate $u\in[0,\infty)^d$. We reformulate the same equations in Cartesian coordinates $(y,z,v)\in \real^{3d}$, where $u=(y\odot y+z\odot z)^{1/2}$ (for the element-wise square root). For $\vec x=[y,z,v]\in \real^{3d}$, consider
\begin{equation}  
 \begin{aligned}
 d\vec x= \Bigl[-\lambda \vec x - \nabla G_{3}(\vec x)\Bigr]\,dt +\sqrt{2/\beta}\, dW^3(t),
  \end{aligned}\label{cyl}
\end{equation}
for
$G_{3}(\vec x)= G( (y\odot y + z\odot z)^{1/2} \odot v)$ and an $\mathbb{R}^{3d}$-valued Brownian motion $W^3(t)$. 
The function $G_{3}$ is locally bounded under \cref{reg1}, and solutions of \cref{cyl} are well defined. We use these as a tool to analyze \cref{main}.

Let $P_t(X_0,B)=\mathbb{P}(\vec x_t\in B\mid \vec x_0=X_0)$.
Recall that a process is irreducible if for any Borel set $B$ with positive Lebesgue measure and any $X_0$, there exists $t>0$ so that $P_t(X_0,B)>0$, and strong Feller if $P_t(X_0,B)
$ is continuous in $X_0$ for all Borel sets $B$.
We say a function $\mu(y,z,v)$ is cylindrical on $\real^{3d}$ if for some $\tilde \mu\colon \real^{2d}\to \real$ we have $\mu(y,z,v)=\tilde \mu(u,v)$  for all $u=((y\odot y)+(z\odot z))^{1/2}$.

\begin{theorem} \label{wp}  Suppose that \cref{reg1} holds. Subject to initial condition $(u,v)_0=(u_0,v_0)\in \mathcal{X}=\real_+^d\times \mathbb R^d$,
there is a unique solution $(u, v)_t$ to \cref{main} such that  $u_t>0$ almost surely and $\mean{\sup_{0\le t\le T} \norm{(u,v)_t}^q}<\infty$, for any $T>0$, $q\ge 2$. Further, $(u,v)_t$ is strong Feller and irreducible on $\mathcal X$.

The probability density function $p(t,\cdot)$ of $\vec x_t$ (see \cref{cyl}) is well-defined. 
If the initial distribution is cylindrical, the marginal distribution of $((y_t^2+z_t^2)^{1/2},v_t)$ equals the law of the solution $(u_t,v_t)$ of \cref{main}.
\end{theorem}
\begin{proof}
Under \cref{reg1}, $\nabla G_3$ is bounded on $\real^{3d}$: because $\nabla G$ is
bounded by \cref{reg1}(iv), and the chain-rule factors $y_i/u_i$ and $z_i/u_i$
(where $u_i=(y_i^2+z_i^2)^{1/2}$) are uniformly bounded by $1$. Hence, the drift
$-\lambda\vec{x}-\nabla G_3(\vec{x})$ of \cref{cyl} is bounded and measurable on
$\real^{3d}$. Further, the diffusion coefficient $a=(2/\beta)I_{3d}$ is constant and
uniformly elliptic, so conditions~(1.3) and~(1.4) of
\cite{Stroock2013-df} hold trivially.
The coercivity bound
$$
\vec{x}^\top\bigl(-\lambda\vec{x}-\nabla G_3(\vec{x})\bigr)
\leq -\lambda\|\vec{x}\|^2+K,
$$
which follows from \cref{reg1}(iii), together with It\^{o}'s formula applied to
$\|\vec{x}_t\|^2$ and Gronwall's inequality, gives
$\mathbb{E}[\sup_{s\le t}\|\vec{x}^{(n)}_s\|^2] \le C$ uniformly in the
localisation level $n$, for each $t>0$. Chebyshev's inequality then yields the
non-explosion condition~(1.7) of \cite[Theorem~10.1.3]{Stroock2013-df}.
Consequently, \cite[Corollary~10.1.4]{Stroock2013-df} gives that the martingale
problem for $(a, b)$ is well-posed, that $\vec x_t$ is strong Feller, and that
$\vec x_t$ admits a probability density function absolutely continuous with
respect to Lebesgue measure on $\real^{3d}$ for each $t>0$.
Higher $q$-moments $\mathbb{E}[\sup_{s\le t}\|\vec x_s\|^q] < \infty$ follow
analogously by applying It\^{o}'s formula to $(\|\vec x_t\|^2)^{q/2}$.
Irreducibility of $\vec{x}_t$ follows from the non-degenerate additive noise by
standard arguments~\cite[Section~6]{Bellet2006-pq}.

 The generator for \cref{main} has singular coefficients on a half space and regularity of the density does not follow directly from standard theories; we show that the $u$-equation in \cref{main} arises from the solution $\vec x_t$ of \cref{cyl} and that the process $(u,v)_t$ inherits properties from $\vec x_t$. Since the noise in \cref{cyl} is additive and non-degenerate, the process
$\mathbf{x}_t$ has a probability density function absolutely continuous with respect to
Lebesgue measure on $\mathbb{R}^{3d}$ for each $t>0$
\cite[Corollary~10.1.4]{Stroock2013-df}, so in particular $P(u_{i,t}=0) =
P((y_{i,t},z_{i,t})=(0,0)) = 0$ for all $t>0$.
We may therefore apply the It\^{o} formula to $u=(y\odot y+z\odot z)^{1/2}$,
following \cite[Proposition~3.21]{Karatzas1991-xr} (approximating the square root by
smooth functions and passing to the limit, justified since $u_{i,t}>0$ a.s.):
$$
\begin{aligned}
  du_t&=\frac{1}{u_t}\odot(y_t\odot(-\lambda\,y_t)+z_t\odot(-\lambda\, z_t))\,dt
  -\frac{1}{u_t}(y_t\odot (y_t/u_t)+z_t\odot(z_t/u_t))\odot v_t\odot \nabla G(u_t\odot v_t)\,dt \\
  &\quad\quad+\sqrt\frac{2}{\beta} \frac{1}{u_t} \odot\begin{pmatrix}y_t\\z_t\end{pmatrix}
  \cdot\begin{pmatrix} dW_t^{3,y}\\ dW_t^{3,z}\end{pmatrix}
  + \frac{1}{2} \frac{2}{\beta} \left(\frac{-1}{u^3}\odot(y_t\odot y_t+z_t\odot z_t)
  +\frac{2}{u_t}\right)\,dt\\
  &=-\lambda\, u_t\,dt - v_t\odot \nabla G(u_t\odot v_t)\,dt
  +\frac{1}{\beta\,u_t}\,dt+ \sqrt\frac{2}{\beta}\,dW_t,
\end{aligned}
$$
where the $\mathbb{R}^d$-valued process $W(t):= u_t^{-1} \odot(y_t,z_t)
\cdot (dW_t^{3,y}, dW_t^{3,z})$ is again a Brownian motion due to the identity
$u^2_t = y^2_t + z^2_t$ \cite[Theorem~3.3.16]{Karatzas1991-xr}.
This is the $u$-equation in \cref{main} as required. By the uniqueness of the solutions, we see that \cref{cyl} and \cref{main} agree  as long as the initial distributions agree.

The process $(u,v)_t$ inherits several properties from $\vec x_t$:  namely, $(u,v)_t$ is well posed in $\mathcal X$ and is strong Feller and irreducible, and the distribution $\rho(t,\cdot)$  of $(u,v)_t$ is given by $p(t,\cdot)$ if the initial distribution of $\vec x$ is cylindrical. Further, the moment bounds on $\vec x_t$  imply the required moment bounds for $(u,v)_t$.
 \end{proof}

 Let $L^q(\pi)$ denote the set of integrable functions $\phi\colon\mathcal X\to\real$ such that $\int \abs{\phi(x)}^q \pi(x)\,dx<\infty,$ where $\pi$ is the invariant measure \cref{pi}. In the following,  we show how to control expectations of $\phi((u,v)_t)$, which extends the moment bounds in \cref{wp} to $\phi\in L^q(\pi)$, and includes some $\phi$ that are singular near $u_i=0$.

\begin{proposition}\label{l1}Let $(u,v)_t$ denote the solution of \cref{main} with initial distribution $\rho_0$. Let $p\geq 1$. Assume that ($p,q)$ are conjugate and that  $h_0=\rho_0/\pi$ belongs to $L^p(\pi)$. If $\phi \in L^q(\pi)$, then $\mean{ \phi((u,v)_t)}\le \norm{\phi}_{L^q(\pi)}\, \norm{h_0}_{L^p(\pi)}$ for $t>0$.
\end{proposition}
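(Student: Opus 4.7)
The plan is to reduce the bound to the standard fact that the Markov semigroup of any process with invariant measure $\pi$ is an $L^q(\pi)$-contraction for every $q\in[1,\infty]$. Let $(P_t)_{t\ge 0}$ denote the transition semigroup of \cref{main}, acting on bounded measurable functions by $(P_t\phi)(x_0)=\mean{\phi((u,v)_t)\mid (u,v)_0=x_0}$. Conditioning on the initial distribution $\rho_0=h_0\,\pi$ gives
\[\mean{\phi((u,v)_t)}=\int_{\mathcal X}(P_t\phi)(x)\,h_0(x)\,\pi(dx),\]
and H\"older's inequality with the conjugate pair $(p,q)$ then yields
\[\abs{\mean{\phi((u,v)_t)}}\le \norm{P_t\phi}_{L^q(\pi)}\,\norm{h_0}_{L^p(\pi)}.\]
So the proof reduces to showing the contraction $\norm{P_t\phi}_{L^q(\pi)}\le \norm{\phi}_{L^q(\pi)}$.

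For $q=\infty$ this is a pointwise consequence of $\abs{(P_t\phi)(x_0)}\le \norm{\phi}_{L^\infty(\pi)}$, since $P_t$ is a conditional expectation. For $q\in[1,\infty)$, I would apply Jensen's inequality to the convex function $r\mapsto |r|^q$ under the conditional expectation defining $P_t$, giving the pointwise bound $\abs{P_t\phi}^q \le P_t(\abs{\phi}^q)$. Integrating against $\pi$ and using its invariance under \cref{main} (so that $\int P_t(\abs{\phi}^q)\,d\pi = \int \abs{\phi}^q\,d\pi$) yields the claimed contraction. The existence and absolute continuity of the law of $(u,v)_t$, secured by \cref{wp}, together with the moment bounds there, make the Fubini step and all integral manipulations above legitimate.

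The only delicate point, and what I expect to be the main technical step, is that $P_t\phi$ is initially defined only for bounded $\phi$, whereas $\phi\in L^q(\pi)$ may be unbounded. The natural remedy is a density argument: establish the bound for $\phi$ bounded (and, if convenient, compactly supported in the interior of $\mathcal X$), and then extend to arbitrary $\phi\in L^q(\pi)$ for $q<\infty$ by approximation in $L^q(\pi)$, using the uniform H\"older estimate already obtained to pass to the limit. The $L^q$-contractivity itself is elementary once invariance of $\pi$ under \cref{main} is granted; the substance of the proposition lies in interpreting the inequality through the dual action $P_t$ and confirming that the framework of \cref{wp} makes this duality available on $\mathcal X$.
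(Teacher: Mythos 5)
Your argument is correct, but it runs along the dual side of the one in the paper. The paper works on the density: it sets $h(t,\cdot)=\rho(t,\cdot)/\pi$, invokes the Fokker--Planck equation $\partial_t h=\Delta h+\nabla\log\pi\cdot\nabla h$ and the entropy-dissipation identity $\frac{d}{dt}\int\psi(h)\,\pi=-\int\psi''(h)\,|\nabla h|^2\,\pi\le 0$ with $\psi(h)=h^p$, concluding that $\norm{h(t,\cdot)}_{L^p(\pi)}$ is non-increasing, and then applies H\"older to $\int\phi\,h(t,\cdot)\,\pi$. You instead propagate the test function: you write $\mean{\phi((u,v)_t)}=\int (P_t\phi)\,h_0\,d\pi$, apply H\"older, and prove the $L^q(\pi)$-contractivity of $P_t$ by Jensen plus invariance of $\pi$. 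By the duality $\int (P_t\phi)h_0\,d\pi=\int\phi\,h_t\,d\pi$, the two contraction statements are equivalent, so the results match exactly. What each buys: your route is more elementary and purely probabilistic --- it needs only the Markov property and the invariance of $\pi$ (which the paper asserts), and it avoids having to justify that $h$ is a sufficiently regular solution of the PDE and that the integration by parts behind the dissipation identity is legitimate despite the singular drift near $u_i=0$; the cost is the truncation/density step you flag for unbounded $\phi$, which is routine (take $\phi\ge0$, approximate by $\phi\wedge n$, and use monotone convergence, noting that invariance extends from bounded $\psi$ to $\psi=|\phi|^q\in L^1(\pi)$ the same way). The paper's route, in exchange, delivers the quantitative monotone decay of $\norm{h(t,\cdot)}_{L^p(\pi)}$ itself, which is a slightly stronger piece of information about the law of the process, though only the resulting inequality is used downstream.
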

\begin{proof} Let $\rho(t,\cdot)$ denote the density of $(u,v)_t$. Denote $h=\rho/\pi$ and $h_0=\rho_0/\pi$. It is well known (e.g., \cite{Markowich2004-dw}) that $h$ satisfies the PDE
\[
\frac{\partial h}{\partial t}=\Delta h + \nabla \log \pi\cdot \nabla h,\qquad h(0,z)=h_0(z),
\]
on $t>0$ and $z\in(0,\infty)^d\times \real^d$. For any twice-differentiable convex function $\psi$, it follows~\cite[Eq.~9]{Markowich2004-dw}   that
\[
\frac{d}{dt} \int \psi(h(t,x))\, \pi(x)\,dx=-\int \psi''(h(t,x))\,|\nabla h(t,x)|^2\,\pi(x)\,dx\le 0.\]
Choosing $\psi(h)=h^p$ implies that $\|h(t,\cdot)\|_{L^p(\pi)}$ is a non-increasing function of time.
Thus, writing
 \begin{align*}
     \mean{\phi((u,v)_t)}
 =\int \phi(x)\,h(t,x)\, \pi(x)\, dx
 \le \norm{\phi}_{L^{q}(\pi)} \, \norm{h(t,\cdot)}_{L^p(\pi)} \le \norm{\phi}_{L^{q}(\pi)} \, \norm{h_0}_{L^p(\pi)}
 \end{align*}
 completes the proof.
\end{proof}

As a corollary, we prove the following moment bounds on $1/u_{1,t}^r$ for $r<2$. The density of the invariant measure $\pi$ behaves like $u_1$ near $u_1=0$, and we expect $\mathbb{E}\bp{1/u_{1,t}^2}$ to be infinite. 
\begin{corollary}\label{cor} Let the assumption of \cref{wp} hold and the initial distribution $\rho_0(u,v)$ be such that $\rho_0/\pi$ is uniformly bounded.
Then, for $i=1,\dots,d$,

1. $\displaystyle\sup_{t>0}\mathbb{E} \bp{\frac 1{u_{i,t}^r}}<\infty$ for $0<r<2$ (here $u_{i,t}$ denotes the $i$th component of $u_t$).\vspace{1ex}

2. For each $T>0$ and $\epsilon>0$, we have
 $\mean{\pp{\int_0^T \frac{1}{u_{i,s}^{2-\epsilon}}\,ds}^q} <\infty $ for all $1\le q<2$.
  
\end{corollary}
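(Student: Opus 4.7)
This is a direct application of \cref{l1}: take $\phi(u,v)=u_i^{-r}$, the conjugate pair $(p,q)=(\infty,1)$, and use that $h_0=\rho_0/\pi$ is uniformly bounded by hypothesis, so $\|h_0\|_{L^\infty(\pi)}<\infty$. It remains to verify $\phi\in L^1(\pi)$. Near $u_i=0$, the factor $\prod_j u_j$ present in $\pi$ (see \cref{pi}) renders $\pi/u_i^r \sim u_i^{1-r}$, which is integrable iff $r<2$. The tail integral is controlled by the Gaussian factor $\exp(-\beta\lambda\|(u,v)\|^2/2)$ together with the lower bound on $G$ from \cref{reg1}.

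\textbf{Part 2.} The naive Jensen estimate $(\int_0^T f\,ds)^q \leq T^{q-1}\int_0^T f^q\,ds$ combined with Part 1 only handles $q(2-\epsilon)<2$, so a different device is needed for large $q$. My plan is to apply Ito's formula to $f(u_i)=u_i^\epsilon$, which is $C^2$ on $(0,\infty)$, a range $u_{i,t}$ never leaves by \cref{wp}. After localizing with stopping times $\tau_n=\inf\{t: u_{i,t}\leq 1/n\}$ and letting $n\to\infty$ by monotone convergence, one obtains
\begin{equation*}
\tfrac{\epsilon^2}{\beta}\int_0^T u_{i,s}^{-(2-\epsilon)}\,ds = u_{i,T}^\epsilon - u_{i,0}^\epsilon + \epsilon\lambda\int_0^T u_{i,s}^\epsilon\,ds + \epsilon\int_0^T v_{i,s}(\partial_iG)(u_s\odot v_s)u_{i,s}^{\epsilon-1}\,ds - \epsilon\sqrt{2/\beta}\,M_T,
\end{equation*}
where the singular drift $(\epsilon^2/\beta)u_i^{\epsilon-2}$ produced by Ito has been isolated, and $M_T := \int_0^T u_{i,s}^{\epsilon-1}\,dW^{1}_{i,s}$ is a local martingale with $\langle M\rangle_T = \int_0^T u_{i,s}^{-(2-2\epsilon)}\,ds$.

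Taking $q$-th moments, the polynomial-in-$u$ terms have finite moments by \cref{wp}; applying Burkholder--Davis--Gundy to $M_T$ and Cauchy--Schwarz to the $v(\partial_iG)u^{\epsilon-1}$ integral (using $\|\nabla G\|_\infty\leq B$ from \cref{reg1}) yields the recursion
\begin{equation*}
\mathbb{E}\Bigl[\Bigl(\int_0^T u_{i,s}^{-(2-\epsilon)}ds\Bigr)^q\Bigr] \leq C + C\sqrt{\mathbb{E}\Bigl[\Bigl(\int_0^T u_{i,s}^{-(2-2\epsilon)}ds\Bigr)^q\Bigr]},
\end{equation*}
each iteration doubling the exponent $\epsilon\mapsto 2\epsilon$. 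After $K=\lceil\log_2(2/\epsilon)\rceil$ iterations, $2^K\epsilon\geq 2$, so $u_{i,s}^{2^K\epsilon-2}$ has non-negative exponent and the integral is dominated by $T\sup_{s\leq T}u_{i,s}^{2^K\epsilon-2}$, whose $q$-th moment is finite by \cref{wp}. Unrolling the recursion from step $K$ down to step $0$ gives Part 2.

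The main obstacle I anticipate is the rigorous justification of Ito's formula at points where $u^{\epsilon-1}$ is singular (for $\epsilon<1$); this I handle via the stopping-time localization above, exploiting that all terms in the identity are monotone in $n$ or controlled uniformly, so the $n\to\infty$ limit exists and inherits the same recursion. A secondary bookkeeping issue is tracking that the constants $C$ in the recursion remain finite at each step (depending on finite moments of $u,v,\partial G$), which follows from \cref{wp} and \cref{reg1}(iv).
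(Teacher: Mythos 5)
Your proposal is correct. Part 1 is essentially the paper's argument: both apply \cref{l1} with $\phi=u_i^{-r}$ and check integrability against the $u_i\,du$ weight in $\pi$; the only cosmetic difference is that you take $(p,q)=(\infty,1)$ while the paper picks finite conjugate exponents with $q\,r<2$ (your choice is fine since $\rho_0/\pi\in L^\infty(\pi)$, though strictly the proof of \cref{l1} uses $\psi(h)=h^p$ for finite $p$, so either take $p$ finite and large or note that the $L^\infty$ bound follows by letting $p\to\infty$). Part 2 starts from the identical It\^o computation for $u_i^\epsilon$ (your displayed identity is exactly the paper's \cref{bla}), but you close the estimate differently. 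The paper splits the drift term $\epsilon\int v\,[\nabla G]\,u^{\epsilon-1}$ by Young's inequality so that the piece proportional to $\int u^{\epsilon-2}$ is absorbed into the left-hand side, and then controls the BDG term using Part 1 with $r=2-2\epsilon$ in a single pass. You instead use Cauchy--Schwarz on both the drift term and the quadratic variation, producing the self-referential bound $A_q(\epsilon)\le C+C\sqrt{A_q(2\epsilon)}$ and unrolling the exponent-doubling recursion from the trivial level $2^K\epsilon\ge 2$ downward. Both routes are valid; your recursion is somewhat longer but has the merit of controlling the full $q$-th moment of $\int_0^T u^{2\epsilon-2}\,ds$ at every stage (the paper's BDG step, which moves the expectation inside the time integral before raising to the power $q/2$, is looser on this point), whereas the paper's Young-inequality absorption is a one-shot argument needing only the first-moment bound from Part 1. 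Your two flagged technicalities (localization at $\tau_n$ before passing $n\to\infty$, and finiteness of the base case of the recursion so the inequalities are non-vacuous) are real but standard, and your sketch of how to resolve them is adequate.
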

\begin{proof}  
Let $\phi(u,v)=u_1^{-r}$ for example. Since $r<2$, we can choose conjugate exponents  $(p,q)$ such that $q\,r<2$. Due to the boundedness assumption on $\rho_0/\pi$ and $\pi$ being a probability measure, $\rho_0/\pi\in L^p(\pi)$. Further, $\phi\in L^q(\pi)$  as \[
\int \phi^q\, \pi = \frac{1}{Z_{\pi}} \int\frac{1}{u_1^{q\,r-1}} \,u_2\cdots u_d\, \exp\pp{-\beta\,\pp{\frac 12 \lambda\,\norm{(u,v)}^2 + G(u\odot v)}
}\,du\,dv\]
is finite for $q\,r-1<1$ as the exponential term decays rapidly at infinity due to the assumptions on $G$. Consequently, we may apply \cref{l1} to deduce $\mean{\phi((u,v)_t)}<\infty$ as required.

2.  By It\^o's formula applied to $(u,v)\mapsto u_1^\epsilon$ for $\epsilon>0$,
 \begin{gather}
     \begin{split}
\frac{1}{\beta}\,\epsilon^2\int_0^T {u_{1,t}^{\epsilon-2}}\,dt
&=
{u_{1,T}^\epsilon}-{u_{1,0}^\epsilon}+\lambda\,\epsilon\,\int_0^T {u_{1,t}^\epsilon}\,dt\\
&\quad+ \epsilon\int_0^T v_{1,t} \,[\nabla G(u_t\odot v_t)]_1 \,u_{1,t}^{\epsilon-1}\,dt-\sqrt\frac{2}{\beta} \,\epsilon\int_0^T u_{1,t}^{\epsilon-1}\,dW^{1,1}_t.
\end{split}\label{bla}\end{gather}
By Young's inequality with conjugate exponents $((2-\epsilon)/(1-\epsilon),(2-\epsilon))$, we have for any $\alpha>0$ that
\begin{align*} |v_1 \,[\nabla G(u\odot v)]_1 \,u_1^{\epsilon-1}|\le
\frac{1}{2-\epsilon} \alpha^{-(2-\epsilon)} |v_1 \,[\nabla G(u\odot v)]_1 |^{(2-\epsilon)}+
\frac{1-\epsilon}{2-\epsilon}\alpha^{(2-\epsilon)/(1-\epsilon)}\,u_1^{\epsilon-2}.
\end{align*}
We may set $\epsilon\,\alpha^{(2-\epsilon)/(1-\epsilon)}(1-\epsilon)/(2-\epsilon)=\epsilon^2/2\beta$ by choice of $\alpha$. Hence,  for a constant $c(\epsilon)$,
\begin{align*}
\epsilon\,\abs{\int_0^T v_{1,t} \,[\nabla G(u_t\odot v_t)]_1\,u_{1,t}^{\epsilon-1}\,dt} \le& 
c(\epsilon)\int_0^T  |v_{1,t} \,[\nabla G(u_t\odot v_t)]_1 |^{(2-\epsilon)}\,dt + \frac{1}{2\beta}\,\epsilon^2\int_0^T {u_{1,t}^{\epsilon-2}}\,dt.
\end{align*}
Substituting into \cref{bla}, we obtain
 \begin{align*}
\frac{1}{2\beta}\,\epsilon^2\int_0^T {u_{1,t}^{\epsilon-2}}\,dt
&\le
{u_{1,T}^\epsilon}-{u_{1,0}^\epsilon}+\lambda\,\epsilon\,\int_0^T {u_{1,t}^\epsilon}\,dt\\
&\quad+ c(\epsilon)\int_0^T |v_{1,t} \,[\nabla G(u_t\odot v_t)]_1|^{(2-\epsilon)}\,dt+\sqrt\frac{2}{\beta} \,\epsilon\,\abs{\int_0^T u_{1,t}^{\epsilon-1}\,dW^{1,1}_t}.
\end{align*}

On the right-hand side, $q$th moments are finite, which we now demonstrate for the last two terms. By the Burkholder--Gundy--Davis inequality \cite[Theorem 3.28]{Karatzas1991-xr} and Jensen's inequality for $1<q\le 2$, for a constant $c$,
\[
\mean{\abs{\int_0^T u_1^{\epsilon-1}\,dW_1(t)}^q}
\le c\,
\mean{\bp{\int_0^T {u_1^{2\epsilon-2}}\,dt}^{q/2}}
\le c\,
\bp{\mean{\int_0^T \frac{1}{u_1^{r}}\,dt}}^{q/2}
\]
with $r=2-2\epsilon$.
The integrand on the right-hand side is bounded for $\epsilon>0$ by part 1 with $r=2-2\epsilon<2$. 
Under the assumption on $G$, we have that $v_1 [\nabla G (u\odot v)]_1$ is polynomially growing and has bounded $q$th moments by \cref{wp}.  Hence, $q$th moments of the integral term $c(\epsilon)\int \cdots\,dt$ are finite.
   
   We conclude that $q$-moments of $\int_0^T {u_1^{\epsilon-2}}\,dt$ are finite for any $\epsilon>0$.
\end{proof}
We will also require the following time-regularity bounds. 
\begin{proposition} \label{timeregbnd} Let $(u,v)_t$ denote the solution of \cref{main} with initial distribution $\rho_0$. Suppose that the assumptions of \cref{cor} hold. For $1\le q< 2$, $T>0$, and $\epsilon\in(0,1/2)$,  there exists $c>0$ such that, for $0\le s,t\le T$,
\begin{enumerate}
    \item $\mathbb{E} \bp{ {\|(u,v)_t-(u,v)_s\|^q}\strutb}<c\, |t-s|^{q/2}$.
    \item $\mathbb{E} \bp{\frac {|u_{i,t}-u_{i,s}|}{u_{i,s}\,u_{i,t}}}\le c\, |t-s|^{1/2-\epsilon},\qquad i=1,\dots,d$.
\end{enumerate}
\end{proposition}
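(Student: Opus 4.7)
For part~(1), the cleanest path is to lift to the Cartesian process $\vec{x}=(y,z,v)$ from~\cref{cyl}. Two structural facts drive the argument: since $u_i=\sqrt{y_i^2+z_i^2}$ is the Euclidean norm of $(y_i,z_i)$, it is $1$-Lipschitz and hence $\|(u,v)_t-(u,v)_s\|\le\|\vec{x}_t-\vec{x}_s\|$ pathwise; and under \cref{reg1}(iv), the chain rule gives $\|\nabla G_{3}(\vec{x})\|\le C\|\vec{x}\|$ because the factors $y_i/u_i$ and $z_i/u_i$ appearing in the derivatives are bounded in magnitude by $1$. Writing the integral form of~\cref{cyl}, the drift term contributes an $O(|t-s|^q)$ bound via Jensen's/H\"older's inequality together with the uniform moment bounds on $\|\vec{x}_r\|$ from~\cref{wp}, while the Gaussian increment contributes $O(|t-s|^{q/2})$. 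For $|t-s|\le T$ the latter controls the bound, yielding part~(1).

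For part~(2), I would start from the identity $|u_{i,t}-u_{i,s}|/(u_{i,s}u_{i,t})=|1/u_{i,t}-1/u_{i,s}|$ and combine two elementary pointwise bounds,
\[
\frac{|u_{i,t}-u_{i,s}|}{u_{i,s}u_{i,t}}\le\frac{|u_{i,t}-u_{i,s}|}{\min(u_{i,s},u_{i,t})^2},\qquad \frac{|u_{i,t}-u_{i,s}|}{u_{i,s}u_{i,t}}\le\frac{1}{\min(u_{i,s},u_{i,t})},
\]
where the second follows from $|u_{i,t}-u_{i,s}|\le\max(u_{i,s},u_{i,t})$. Geometric interpolation with parameter $\alpha\in(0,1)$ (via $X=X^\alpha X^{1-\alpha}$ on both bounds) gives
\[
\frac{|u_{i,t}-u_{i,s}|}{u_{i,s}u_{i,t}}\le\frac{|u_{i,t}-u_{i,s}|^\alpha}{\min(u_{i,s},u_{i,t})^{1+\alpha}}.
\]
I then apply H\"older's inequality with conjugate exponents $(p,p')$. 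Using $1/\min\le 1/u_{i,t}+1/u_{i,s}$ together with \cref{cor}(1), the denominator factor $\mathbb{E}[\min(u_{i,t},u_{i,s})^{-(1+\alpha)p'}]$ is uniformly bounded provided $(1+\alpha)p'<2$, while part~(1) gives $\mathbb{E}[|u_{i,t}-u_{i,s}|^{\alpha p}]^{1/p}\le c|t-s|^{\alpha/2}$. Choosing $\alpha=1-2\epsilon$ and $p'$ slightly above $1$ so that $(1+\alpha)p'<2$ yields the stated exponent $1/2-\epsilon$.

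The main obstacle is the tight interplay in part~(2): to extract a time exponent close to $1/2$ one must push $\alpha$ close to $1$, which in turn forces the power on $1/\min(u_{i,s},u_{i,t})$ close to $2$, right at the edge of the integrability threshold of~\cref{cor}(1). Accessing part~(1) for \emph{every} $q\ge 1$ is therefore essential, because as $\alpha\to 1$ the conjugate exponent $p$ diverges, and arbitrarily high moments of $|u_{i,t}-u_{i,s}|$ are needed to absorb the near-singular moment of $1/\min(u_{i,s},u_{i,t})$.
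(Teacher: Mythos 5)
Your proposal is correct, and both parts take a genuinely different route from the paper. For part (1), the paper works directly with \cref{main} and must confront the singular drift $1/(\beta u)$: it applies H\"older's inequality to $\int_s^t u_{1,r}^{-1}\,dr$ with exponents $(2-\epsilon,(2-\epsilon)/(1-\epsilon))$ and then invokes the integrated inverse-moment bound of \cref{cor}(2), which costs an $\epsilon$ in the exponent. Your Cartesian lift via \cref{cyl} sidesteps the singularity entirely: the reverse triangle inequality gives the pathwise domination $\|(u,v)_t-(u,v)_s\|\le\|\vec x_t-\vec x_s\|$ (legitimate, since \cref{wp} realizes $(u,v)$ in law as the cylindrical projection of $\vec x$ once $\rho_0$ is lifted to a cylindrical initial law), and the drift of \cref{cyl} grows at most linearly, so the clean $|t-s|^{q/2}$ rate claimed in the statement actually comes out of your argument, whereas the paper's own computation only yields $|t-s|^{q(1/2-\epsilon)}$. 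For part (2), the paper splits on the event $\mathcal R=\{u_{i,t}\wedge u_{i,s}<\Delta\}$, estimates the two pieces separately, and optimizes over $\Delta=|t-s|^\alpha$; your geometric interpolation between the two pointwise bounds $|u_{i,t}-u_{i,s}|/\min^2$ and $1/\min$ reaches the same exponent in one H\"older step, using exactly the same inputs (part (1) with large $q$, and \cref{cor}(1) just below the integrability threshold $r=2$). Your closing remark correctly identifies the tight trade-off that both proofs must navigate. The only detail worth spelling out in a final write-up is the chain-rule bound $\|\nabla G_3(\vec x)\|\le \cinfty\,\|\vec x\|$ (the factors $y_i/u_i$, $z_i/u_i$ are bounded by one and the remaining factors are $|v_i|$ or $u_i$), and the observation that $\alpha p\ge 1$ can always be arranged so that part (1) applies to the moment $\mathbb{E}|u_{i,t}-u_{i,s}|^{\alpha p}$.
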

\begin{proof}   
 1. We treat the $u_1$ component only ($v$ is handled similarly). From \cref{main},
\[
u_{1,t}=u_{1,s}+\frac 1\beta\int_s^t \frac{1}{ u_{1,r}}\,dr + \cdots+ \sqrt\frac{2}\beta\int_s^t dW^1_r,
\]
where we omit several integrals that are straightforward to estimate under the moment bound in \cref{wp}. 

By Holder's inequality with conjugate exponents $(2-\epsilon, (2-\epsilon)/(1-\epsilon))$,
\[
\int_s^t \frac{1}{ u_{1,r}}\,dr 
\le |t-s|^{(1-\epsilon)/(2-\epsilon)}\,  {\bp{\int_s^t\frac{1}{|u_{1,r}|^{2-\epsilon}} \,dr}^{1/(2-\epsilon)}}.
\]
Now evaluate the $q$th moments and use moment bounds on Gaussian increments to find, for a constant $c_q>0$, that
\begin{align*}
(\mathbb E |u_{1,t}-u_{1,s}|^q)^{1/q}
&\le |t-s|^{(1-\epsilon)/(2-\epsilon)}\,\frac 1\beta\,  \pp{\mathbb{E} \bp{\int_s^t\frac{1}{|u_{1,r}|^{2-\epsilon}} \,dr}^{q/(2-\epsilon)}}^{1/q} + \cdots+\sqrt{\frac 2\beta}\,c_q\,|t-s|^{1/2}.
\end{align*}
By \cref{cor}, the $\mathbb{E} \bp{\pp{\int_s^t\frac{1}{|u_r|^{2-\epsilon}} \,dr}^q}$ is finite for $\epsilon>0$ and $1\le q< 2$. Hence, we have shown, for some constant $c$ depending on $\epsilon$ and $q$, that
$(\mathbb E \|u_t-u_s\|^q)^{1/q}\le c\,\abs{t-s}^{1/2-\epsilon}.
$

2. Let $\mathcal R=\{\min\{u_i(t),u_i(s)\}<\Delta\}$. Then, $\prob(\mathcal R) \le  \prob\pp{u_i(t)<\Delta} +  \prob\pp{u_i(s)<\Delta}$.  By Chebyshev's inequality, $\prob\pp{u_i(t)<\Delta}\le  \Delta^{2-\epsilon}\, \mean{u_i(t)^{\epsilon-2}}$ for $\epsilon>0$. Hence, using \cref{cor}, for a constant $c$, $\prob\pp{\mathcal R}\le c \,\Delta^{2-\epsilon}$. Now, split the expectation with respect to $\mathcal R$ as 
 \begin{align*}
     \mathbb{E} \bp{\frac {|u_i(t)-u_i(s)|}{u_i(s)\,u_i(t)}} 
     =&     \mathbb{E} \bp{\frac {|u_i(t)-u_i(s)|}{u_i(s)\,u_i(t)}1_{\mathcal R}}
     +    \mathbb{E} \bp{\frac {|u_i(t)-u_i(s)|}{u_i(s)\,u_i(t)} (1-1_{\mathcal R})}.
 \end{align*}
 We estimate each term separately.
 
 For conjugate exponents $(p,p')$, Holder's inequality applied to the first term provides
 \begin{align*}
     \mathbb{E} \bp{\frac {|u_i(t)-u_i(s)|}{u_i(s)\,u_i(t)}1_{\mathcal R}}
  \le&     \pp{\mathbb{E} \bp{\abs{\frac 1 {u_i(t)}-\frac1{u_i(s)} }^p}
  }^{1/p}\;\prob\pp{\mathcal R}^{ 1/{p'}}. 
 \end{align*}
 Choose $p'=2+\epsilon$. Then its conjugate exponent $p$ is less than $2$, so that $\mean{u_i^{-p}}<\infty$ as a result of Corollary \ref{cor}. As $\prob(\mathcal{R})\le c\, \Delta^{2-\epsilon}$ for some $c$, 
  the first term is bounded by $c\,\Delta^{
  (2-\epsilon)/(2+\epsilon)}= c\,\Delta^{1-2\epsilon/(2+\epsilon)}$. 

  For the second term, write
  \[
     \frac {|u_i(t)-u_i(s)|}{u_i(s)\,u_i(t)} (1-1_{\mathcal R})
   =    \frac {|u_i(t)-u_i(s)|^{1-\epsilon}}{|u_i(s)\,u_i(t)|^{1-\epsilon}}\,\frac {|u_i(t)-u_i(s)|^{\epsilon}}{|u_i(s)\,u_i(t)|^{\epsilon}}\,
   (1-1_{\mathcal R}).
   \]
For $(p,p')$ conjugate (different to the ones above),
 \begin{align*}
&   \mathbb{E} \bp{\frac {|u_i(t)-u_i(s)|}{u_i(s)\,u_i(t)} (1-1_{\mathcal R})}\\
 &\qquad  \le \pp{  \mathbb{E} \bp{\frac {|u_i(t)-u_i(s)|^{(1-\epsilon)p}}{|u_i(s)\,u_i(t)|^{(1-\epsilon)p}}}}^{1/p}\,\pp{ \mathbb{E} \bp{\frac {|u_i(t)-u_i(s)|^{\epsilon\,p'}}{|u_i(s)\,u_i(t)|^{\epsilon\,p'}}(1-1_{\mathcal R})}}^{1/p'}\\
&\qquad     \le  \pp{\mathbb{E} \bp{\frac {|u_i(t)-u_i(s)|^{(1-\epsilon)\,p}}{|u_i(s)\,u_i(t)|^{(1-\epsilon)\,p}}}}^{1/p}\, \Delta^{-2\epsilon
     }\,\pp{\mathbb{E} \bp{ {|u_i(t)-u_i(s)|^{\epsilon\, p'}}}}^{1/p'}.
   \end{align*}
In the last line, we exploit  $u_i(s),u_i(t)>\Delta$ outside $\mathcal R$.
The third factor is $O(|t-s|^{\epsilon/2})$ by part 1 if $1<\epsilon p'<2$. Choose $p'=3/2\epsilon$. Then $p=1/(1-1/p')=1/(1-(2/3)\epsilon)$.

Focus now on the first factor: for $(r,r')$ conjugate,
\begin{align*}
&\pp{\mathbb{E} \bp{\frac {|u_i(t)-u_i(s)|^{(1-\epsilon)\,p}}{|u_i(s)\,u_i(t)|^{(1-\epsilon)\,p}}}}\\
&\le \pp{\mathbb{E} \bp{ {|u_i(t)-u_i(s)|^{(1-\epsilon)\,p\,r'}}}}^{1/r'}\, \pp{\mathbb{E} \bp{\frac {1}{|u_i(s)\,u_i(t)|^{(1-\epsilon)\,p \,r}}}}^{1/r}.
\end{align*}
This is a product of two terms. The first term is $O(|t-s|^{(1-\epsilon)\,p/2})$ by part 1 with  $q=(1-\epsilon)\,p\, r'$.
   The second term is finite if $(1-\epsilon) \,p\, r<1$ (by \cref{cor} and $2\,a\,b\le a^2+b^2$).
We can achieve $(1-\epsilon)\,p\,r<1$ for any $\epsilon>0$, by choosing $1<r<1/(1-\epsilon)p=(1-(2/3)\epsilon)/(1-\epsilon)$. The resulting estimate is
 \begin{align*}
  \mathbb{E} \bp{\frac {|u_i(t)-u_i(s)|}{u_i(s)\,u_i(t)} (1-1_{\mathcal R})}
 &\le c\, \abs{t-s}^{(1-\epsilon)/2}\, \Delta ^{-\epsilon}\, \abs{t-s}^{\epsilon/2}
 = c\,|t-s|^{1/2}\, \Delta^{-\epsilon}.
 \end{align*}

    We have shown that
       \[\mathbb{E} \bp{\frac {|u_i(t)-u_i(s)|}{u_i(s)\,u_i(t)}} 
     \le  c \,\Delta^{1-\epsilon} +c\,|t-s|^{1/2} \,\Delta^{-\epsilon}.
     \]
     The right-hand side behaves like $\Delta^{1-2\epsilon/(2+\epsilon)} + \abs{t-s}^{1/2}\Delta^{-2\epsilon}$. Taking $\Delta = \abs{t-s}^\alpha$ and equating powers of $\abs{t-s}$ leads to $\alpha\pa{1-2\epsilon/(2+\epsilon)} = \frac12 - 2\alpha \epsilon$. This implies an upper bound of $c \abs{t-s}^{\frac12 - \frac{\epsilon}{2}}$ provided that $\epsilon \leq \frac12$. 
\end{proof}

\subsection{Links to the OU process and CIR process}
We highlight a link to some well-known SDEs, namely, the \emph{Cox--Ingersoll--Ross (CIR) process} and \emph{Ornstein--Uhlenbeck process} \citep{feller1951two}. Girsanov's theorem allows us to transfer properties of these SDEs to our dynamics and therefore provides an alternative way of establishing well-posedness and strong Feller properties. This link will also serve as inspiration for the development of numerical schemes in \cref{sec:disc}.

When the data term $G\equiv 0$, our Langevin dynamics decouples into some well-known SDEs for modeling interest rates. In particular, the \emph{Cox--Ingersoll--Ross (CIR) process} is
\begin{equation}\label{eq:cir1}
    dy_t = \kappa\,(\mu-y_t) \,dt + \theta \sqrt{y_t} \,dW_t.
\end{equation}
This is known to have  unique solutions with $ \mathbb{P}(y_t>0, \forall t\geq 0) = 1$ if and only if the Feller test holds, which occurs for $2\kappa\mu\geq \theta^2$.
In this case, by defining $\hat u_t = \sqrt{y_t}$ (this is the so-called Lamperti transformation), It\^o's formula implies that
\begin{equation}\label{eq:cir_2}
    d\hat u_t = \frac{(4\kappa \mu-\theta^2)/8}{\hat u_t}\,dt -\frac{\kappa}{2}\,\hat u_t \,dt + \frac{\theta}{2} \,dW_t.
\end{equation}
This is precisely our $u$ process when $G\equiv 0$, with $\theta = 2\sqrt{2/\beta}$, $\kappa = 2\lambda$ and $\mu = 2/(\beta\lambda)$. Note that this corresponds to the edge case of the Feller test as $2\kappa\mu = \theta^2$ \citep{feller1951two,carlo2001monte}.

The \emph{Ornstein–-Uhlenbeck process}  is given by 
$
d\hat v_t =-\lambda\, \hat v_t\, dt + \sqrt{\frac{2}{\beta}}\,dW_t^2.
$
This is precisely our $v$ process with $G\equiv 0$.

Given the connections mentioned above,
the general idea for establishing well-posedness of our SDE is to interpret \eqref{main} as  `perturbed' CIR and OU processes. Girsanov transformations will allow us to absorb the data term into the Brownian motion and hence inherit the required properties from CIR and OU processes.  The results shown here can also be derived using the (simpler) arguments involving the cylindrical transformation \cref{cyl}; however, as a bonus, the arguments here additionally show that $u(t)$ is positive almost surely.

In the following, let $\hat z_t \eqdef (\hat u,\hat v)_t$ satisfy
\begin{equation}\label{potential_free_sde_z}
d \hat z_t = \biggl[-\lambda z_t + \binom{1/(\beta \hat u_t)}{0} \biggr]\,dt + \sqrt{\frac{2}{\beta}} \,d  W_t,
\end{equation}
with given initial data $\hat z_0$. The first component $\hat u_t$ of $\hat z_t$ is a CIR process. The CIR process is positive almost surely as, following \cite{Dereich2011-tp}, each component has the form $d\hat u_t = ((4\kappa \mu - \theta^2)/(8\hat u_t))\,dt - \frac12 \kappa \hat u_t\, dt + (\theta/2)\,d\hat W_t$ for $\theta^2 = 8/\beta$, $\kappa=2\lambda$, and $\mu=2/(\beta \lambda)$, which satisfies the Feller positivity condition $2\kappa\mu\geq \theta^2$ with equality.

In order to apply  Girsanov transformations, we first establish the Novikov condition.
\begin{lemma}[Novikov condition]\label{lem:novikov}
    Let \cref{reg1} hold and $b(u,v) = -\binom{v\odot \nabla G(u\odot v)}{u\odot \nabla G(u\odot v)}$. For all $T>0$ and $\gamma>0$, we have
    $$
    \EE\biggl[ \exp\biggl(\gamma \int_0^T \norm{b(\hat z_s)}^2 \,ds\biggr) \biggr] <\infty.
    $$
    There exists a weak solution $(u_t,v_t)$ of \cref{main} that is unique in law. Furthermore, $u_t>0$ almost surely.
\end{lemma}
\begin{proof}
By \cite[Corollary 5.14]{Karatzas1991-xr}, it is enough to find an $\epsilon>0$ such that, for all $t\in [0,T]$, 
$
\EE\left[ \exp\pa{\int_t^{t+\epsilon}  \gamma \norm{b(\hat z_s)}^2} ds \right] <\infty.
$
Since $\cinfty = \norm{\nabla G}_\infty<\infty$, we obtain
\begin{align*}
    \EE\left[ \exp\pa{\gamma \int_t^{t+\epsilon} \norm{b(\hat z_s)}^2 \,ds} \right] \leq  \EE\left[\exp\pa{ \gamma \,\cinfty^2\int_t^{t+\epsilon} \norm{\hat z_s}^2 \,ds }\right].
\end{align*}
By Jensen's inequality,
\begin{equation}\label{eq:novikov}
    \EE\left[\exp\pa{ \int_t^{t+\epsilon} \gamma\, \cinfty^2\, \norm{\hat z_s}^2 \,ds} \right] \leq  \frac{1}{\epsilon}  \int_t^{t+\epsilon} \EE\Bigl[ \exp\pa{\epsilon\,\gamma\, \cinfty^2\,   \norm{\hat z_s}^2 } \Bigr] \,ds.
\end{equation}

We now apply It\^o's lemma to bound $\EE \bigl[ \exp(\epsilon\, \gamma\, \cinfty^2 \norm{\hat z_s}^2 ) \bigr]$. 
Given  $\alpha>0$ and  $\hat y_t \eqdef\exp\left(\frac{\alpha}{2}\norm{( \hat u_t,\hat v_t)}^2 \right)$, 
we get
\begin{align*}
    d\hat y_t &= \pa{-\lambda \alpha  + \frac{\alpha^2}{\beta}} \,\norm{\hat z_t}^2y_t\, dt + \pa{\frac{3\alpha d}{\beta}}\, \hat y_t\, dt + \sqrt{\frac{2}{\beta}} \alpha \hat y_t  \pa{\hat u_t^\top d W^1_t + v_t^\top d W^2_t}.
\end{align*}
Taking expectation,
$$
\frac{d}{dt}\EE\bigl[\hat y_t\bigr] \leq \EE\left[\pa{-\lambda \alpha + \frac{\alpha^2}{\beta}}\norm{\hat z_t}^2 \hat y_t + \pa{\frac{3\alpha d}{\beta}} \hat y_t\right].
$$
Provided that $ \alpha<\beta \lambda $, we have $\frac{d}{dt}\EE\bigl[\hat y_t\bigr] \leq  \pa{\frac{3\alpha d}{\beta}} \EE\bigl[\hat y_t\bigr]$ and therefore
$
\EE\bigl[y_t\bigr]  \leq \hat y_0 \exp\biggl(  \pa{\frac{3 \alpha d}{\beta}} t\biggr),
$
where $\hat y_0=\exp(\alpha \norm{z_0}^2/2)<\infty$. 
Letting $\alpha = 2\epsilon \gamma \cinfty^2 $, the condition on $\alpha$ is satisfied if $\epsilon  <\beta\lambda/(2\gamma \cinfty^2)$. 
Substituting for $\alpha$,
$
\EE\bigl[\hat y_t\bigr]  \leq y_0 \exp\biggl(2\epsilon \gamma \cinfty^2  \pa{\frac{3 d}{\beta}} t\biggr).
$
Returning to \eqref{eq:novikov}, for all $t\leq T-\epsilon$, we see
$$
\EE\left[ \exp\pa{\int_t^{t+\epsilon} \gamma \cinfty^2 \norm{\hat z_s}^2} ds \right] \leq \hat y_0\exp\biggl( 2\epsilon \gamma \cinfty^2  \pa{\frac{3 d}{\beta}} T\biggr).
$$
Thus, the Novikov condition is satisfied.

We can therefore apply Girsanov's theorem. Then, a solution of \cref{main} exists and the solution is unique in law~\citep{Karatzas1991-xr}. Further, for all $T>0$, with the martingale defined by $M_T = \exp(\int_0^T b(\hat z_t) \cdot dW_t - \frac12 \int_0^T b(\hat z_t)^2 dt)$, we see $
\PP(\exists t\in [0,T], \; u_t =0)=
\EE\Bigl[1_{\{\exists t\in [0,T], \; \hat u_t=0\}} M_T\Bigr]$.
Since the CIR process $\hat u_t$ is strictly positive almost surely,  $\PP(\exists t\in [0,T], \;  u_t=0) = 0$.
By the Borel--Cantelli lemma, there are only finitely many $T$ for which the event $\{\exists t\in [0,T],\; u_t=0\}$ occurs so $\PP(u_t>0, t>0) = 1$. 
\end{proof}

\begin{proposition}
Let \cref{reg1} hold.  Then,  $(u,v)_t$ is  strong Feller.
\end{proposition}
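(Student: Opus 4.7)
I would leverage the Girsanov setup already developed via \cref{lem:novikov} to realise $(u,v)_t$ as a drift-perturbation of a decoupled CIR--OU system whose strong Feller property is classical, and then transfer the property back to the original dynamics.

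\textbf{Step 1: change of measure.} Recall that under \cref{reg1} we have $\|b(z)\|\leq \cinfty\|z\|$, and by \cref{lem:novikov} the exponential local martingale
\[
Z_t \;=\; \exp\!\Bigl(-\sqrt{\beta/2}\,{\textstyle\int_0^t} b(z_s)^\top dW_s \;-\; (\beta/4)\,{\textstyle\int_0^t} \|b(z_s)\|^2\,ds\Bigr)
\]
is a genuine $\mathbb{P}$-martingale on $[0,T]$. Defining $\mathbb{Q}$ on $\mathcal{F}_T$ by $d\mathbb{Q}/d\mathbb{P}=Z_T$, the process $\tilde W_t = W_t + \sqrt{\beta/2}\int_0^t b(z_s)\,ds$ is a $\mathbb{Q}$-Brownian motion, and under $\mathbb{Q}$ the process $z_t=(u_t,v_t)$ decouples into
\[
du_t = \Bigl(-\lambda u_t + \tfrac{1}{\beta u_t}\Bigr)dt + \sqrt{2/\beta}\,d\tilde W_t^1,
\qquad
dv_t = -\lambda v_t\,dt + \sqrt{2/\beta}\,d\tilde W_t^2,
\]
i.e.\ a componentwise Lamperti--CIR process (in the edge Feller case identified in the discussion around \cref{eq:cir1}--\cref{eq:cir_2}) together with an OU process.

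\textbf{Step 2: strong Feller of the reference system.} Each coordinate of $u_t$ has an explicit non-central chi-squared transition density (jointly continuous in $(x,y)$ for $y>0$, even at the edge of the Feller condition), and $v_t$ has a Gaussian transition density. Their product is therefore jointly continuous, so under $\mathbb{Q}$ the process $(u,v)_t$ is strong Feller.

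\textbf{Step 3: transfer.} For bounded measurable $\phi$, Girsanov gives
\[
(P_t\phi)(x) = \mathbb{E}^{\mathbb{P},x}[\phi(z_t)] = \mathbb{E}^{\mathbb{Q},x}\bigl[\phi(z_t)/Z_t\bigr],
\]
and I would conclude continuity in $x$ by combining (i) the $\mathbb{Q}$-moment bound $\mathbb{E}^{\mathbb{Q},x}[Z_t^{-p}]<\infty$ uniformly on bounded sets, which follows by repeating the It\^o/Jensen computation inside the proof of \cref{lem:novikov} but with $b$ replaced by its $\mathbb{Q}$-adjoint and $W$ by $\tilde W$; and (ii) total-variation continuity of the CIR$\times$OU law in its initial datum, as a consequence of step 2. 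A standard $L^p$--$L^q$ H\"older argument, applied to the product $\phi(z_t)\cdot Z_t^{-1}$ on the path space, then yields continuity of $x\mapsto(P_t\phi)(x)$.

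\textbf{Main obstacle.} The Girsanov density $Z_t^{-1}$ is a path functional, so strong Feller cannot be read off from the marginal density of $z_t$ alone. The crux of the argument is the uniform-in-$x$ moment bound on $Z_t^{-1}$ under $\mathbb{Q}$, which upgrades $\mathbb{Q}$-continuity of the reference law into $\mathbb{P}$-continuity of $(P_t\phi)(x)$; this is where one must exploit the linear growth of $b$ together with the non-explosion and moment estimates already used in \cref{lem:novikov}.
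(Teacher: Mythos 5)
Your overall strategy is the same as the paper's: use the Novikov condition from \cref{lem:novikov} to perform a Girsanov change of measure that reduces $(u,v)_t$ to a decoupled Lamperti--CIR times OU reference system with explicit, jointly continuous transition densities, and then transfer the strong Feller property back through the density $Z_t$. The paper, however, does not carry out the transfer by hand; it invokes \cite[Theorem 2.1]{maslowski2000probabilistic}, checking its three hypotheses (reference process strong Feller; Novikov; a continuity/uniform-integrability condition on the Girsanov density handled via their Remark 2.3 together with \cref{lem:novikov}).

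Your Step 3 is where the gap lies. Writing $(P_t\phi)(x)=\mathbb{E}^{\mathbb{Q},x}[\phi(z_t)Z_t^{-1}]$ is fine, but the two ingredients you propose --- uniform moment bounds on $Z_t^{-1}$ and total-variation continuity of the time-$t$ marginal of the reference law --- do not combine via H\"older to give continuity in $x$. The reason is the one you yourself flag: $Z_t^{-1}$ is a path functional whose law also depends on $x$, and the path-space laws started from distinct points are not total-variation close (for additive noise they are mutually singular on path space), so marginal TV continuity gives you no handle on $\mathbb{E}^{\mathbb{Q},x}[\phi(z_t)Z_t^{-1}]-\mathbb{E}^{\mathbb{Q},x'}[\phi(z_t)Z_t^{-1}]$. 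The missing ingredient is continuity in probability of $x\mapsto Z_t(x)$ (equivalently of the stochastic integral $\int_0^t b(z_s)^\top dW_s$ with respect to the initial datum), combined with uniform integrability supplied by the Novikov-type exponential moment bounds; this is precisely condition (iii) of Maslowski--Seidler's theorem, which the paper verifies through their Remark 2.3. If you want to avoid citing that result, you would need to prove this continuity-in-probability statement directly (e.g.\ via a coupling of the solutions from nearby initial data and the moment estimates of \cref{wp}), and then conclude by a standard uniform-integrability argument rather than by H\"older against a TV bound. A minor further point: it is $u_{i,t}^2$, not $u_{i,t}$, that has the noncentral chi-squared law; the density of $u_{i,t}$ follows by a change of variables and remains jointly continuous, so Step 2 is fine after this correction.
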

\begin{proof}
If $b(x_s) \equiv 0$, then the processes $(u,v)_t$ decouple component-wise into CIR and OU processes.   Sufficient conditions are provided by  \cite[Theorem 2.1]{maslowski2000probabilistic} to guarantee that Girsanov transformed processes are strong Feller.  We check  the conditions of their theorem: (i) is satisfied since each of these components is strong Feller with  known densities. Condition (ii) is due to the Novikov condition proven above. Condition (iii) holds due to   \cite[Remark 2.3]{maslowski2000probabilistic} along with Lemma \ref{lem:novikov}. 
\end{proof}

\subsection{Link to Riemannian Langevin diffusion}

When sampling distributions on non-Euclidean spaces,
Riemannian Langevin dynamics \citep{girolami2011riemann} are typically more efficient than standard Langevin dynamics since they scale the gradient to account for the geometry of the manifold. In general, if sampling from $\pi(\mathbf z) \propto \exp(-H(\mathbf z))$ on some manifold $\mathcal{M}$ with metric $\mathfrak{g}$, the Riemannian Langevin dynamics  are \cite{xifara2014langevin}
\begin{equation}\label{eq:riemannian}
    d\mathbf z_t = -\mathfrak{g}(\mathbf z_t)^{-1} \nabla H(\mathbf z_t)\, dt + \Gamma(\mathbf z_t)\,dt + \sqrt{2} \mathfrak{g}(\mathbf z_t)^{-1/2} dW_t,
\end{equation}
where $\Gamma_j(\mathbf z) = \sum_k \partial_k [ (\mathfrak{g}(\mathbf z)^{-1})_{jk} ]$.
The last two terms correspond to Brownian motion on the manifold $(\mathcal{M},\mathfrak{g})$ with the correction drift term taking into account the curvature of the manifold. The special setting where the metric is the Hessian of some convex Legendre-type function is called `Mirror-Langevin'. For example,  in the case of sampling probability densities restricted to the positive real plane $\RR^d_+$ or the simplex, a natural choice is $\mathfrak{g}(\mathbf{z}) = \nabla^2 \mathcal H(z)$ where $\mathcal H$ is the Boltzmann--Shannon entropy $\mathcal H(\mathbf z) = \sum_i z_i\log(z_i) - z_i$ \citep{zhang2020wasserstein}.

Recall from \eqref{eq:gibbs} for $\beta=1$ that $\rho(x)$ is the marginal distribution to the following distribution on $\mathbf{z}= (\eta,x) \in \mathcal X=  \real_+^d\times \real^d$:  $\tilde \rho(\eta, x) \propto \exp(-H(\eta, x))$ where  $$
H(\eta,x) \eqdef G(x) + \lambda \sum_{i=1}^d \frac{x_i^2}{8\eta_i} + 2\lambda \eta_i + \frac12 \log(\eta_i).
$$
In the following proposition, we see that our Langevin dynamics \eqref{main} on $(u,v)$ corresponds to Riemannian Langevin dynamics on $\vec z$ with metric $\mathfrak{g}(\vec z)\eqdef M(\vec z)^{-1}$, with $M(\vec z)$ as given in the proposition below. The proof is a direct consequence of It\^o's formula and given in Appendix \ref{app-sec-riemann-geom}.

\begin{proposition}\label{prop-riemann-geom}
    Let $(u,v)_t$ follow the SDE \eqref{main} with $\beta =1$. Then, letting $\vec z_t=(\eta, x)_t$ with $\eta = \frac14 u^2 $ and $x =  u\odot v$, we have that $\vec z_t$ follows \eqref{eq:riemannian} with inverse metric
    $$
    \mathfrak{g}(\vec z)^{-1} = M(\vec z)\eqdef \begin{pmatrix}
 \diag(\eta)&\diag( x)/2 \\
\diag(x)/2&\diag( x^2/(4\eta)) + 4\diag(\eta)  
\end{pmatrix}.
    $$
The distribution $\rho(t,\cdot)$ of $\mathbf{z}(t)$ solves the Fokker--Planck equation
\begin{equation}
\frac{\partial}{\partial t} \rho
= \mathrm{div}\pa{{\strut}M(\mathbf{z}) \nabla
{
\pa{{\strut}H(\mathbf{z})+ \log(\rho)}
} 
\rho }.
\end{equation}
\end{proposition}
\begin{remark}
 By direct computations, one can check that 
for $\phi(\mathbf{z}) = \frac18 \sum_{i=1}^d\frac{\abs{x_i}^2 }{\eta_i}+2\eta_i +4 \sum_{i=1}^d \mathcal H(\eta_i),$
where $\mathcal H(\eta) =
\pa{\eta\log(\eta)-\eta}$, the corresponding metric is 
$$
\mathfrak{g}(\mathbf{z}) = M(\mathbf{z})^{-1} = \begin{pmatrix}
   \frac12 \Id_d &0_{d\times d}\\
    0_{d\times d} &  \Id_d
\end{pmatrix} \nabla^2 \phi(z)  \begin{pmatrix}
   \frac12\Id_d &0_{d\times d}\\
    0_{d\times d} &  \Id_d
\end{pmatrix}.
$$
\end{remark}

\subsection{Ergodicity}\label{erg}
Let $\mathcal L$ denote the infinitesimal generator of \cref{main} and $\rho(t,\cdot)$ denote the distribution of $(u,v)_t$.
Then $(e^{\mathcal L  t}\phi)(X)=\mean{\phi((u,v)
_t) \mid (u,v)_0=X}$ for measurable functions $\phi\colon \mathcal X\to\real$. We seek to show 
that $e^{\mathcal L \,t }\phi$ converges to $\pi(\phi)=\int \phi(x)\,d\pi(x)$ or, alternatively, $\rho_t$ converges to $\pi$. 

If the data term $G$ is uniformly bounded, then one could invoke standard theories (Bakry-Emery criterion) to deduce ergodicity; see Appendix \ref{app-sec-ergod}. This is, however, a strong assumption. In the following, we prove a different ergodicity result that assumes $x^\top \nabla G(x)$ is bounded below and which holds for many unbounded examples, including $G(x)=\norm{Ax-y}^2$.
The proof depends on the theory of Lyapunov--Foster drift conditions~\citep{Meyn2012-oq,Lelievre2016-hx}. 

For the Lyapunov function $V(u,v)=1+\norm{(u,v)}^2$, let $L^\infty_V$ denote the Banach space  of  measurable functions $\phi\colon \mathcal X\to\real$ such that the norm $\norm{\phi}^\infty_
V\coloneq\sup_x \abs{\phi(x)/V(x)}<\infty.$
\begin{theorem} \label{erg2} Under \cref{reg1}, the solution $(u,v)_t$ of \cref{main} converges to the invariant $\pi$ geometrically. In particular, for $V(u,v)=1+\norm{(u,v)}^2$ and some $C,\rho>0$,
    \[\snorm{e^{\mathcal L\, t}\phi   - \pi(\phi)}^\infty_V\le  C \exp(-\rho\, t) \norm{\phi-\pi(\phi)}^\infty_V,\qquad t>0,\,\phi\in L^\infty_V.
    \]
\end{theorem}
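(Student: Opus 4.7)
The plan is to apply a Harris-type ergodicity theorem (as in \cite{Hairer2011-of,Meyn2012-oq}), which reduces geometric ergodicity in $L^\infty_V$ to (i) a Lyapunov drift inequality $\mathcal{L} V \le -a V + b$ with $a>0$ and (ii) a minorization condition on some sub-level set $\{V \le R\}$ with $R>b/a$. The drift condition is the easy step: applying $\mathcal{L}$ to $V((u,v)) = 1 + \|(u,v)\|^2$ using the Ito computations for $\|u\|^2$ and $\|v\|^2$ already carried out in the proof of \cref{wp} --- note the singular $1/(\beta u_i)$ drift contributes only a harmless $2/\beta$ per component since $\partial_{u_i} V = 2 u_i$ cancels the singularity --- and invoking \cref{reg1}(iii) to bound $-4(u \odot v)^\top \nabla G(u \odot v) \le 4K$, one finds
\begin{equation*}
\mathcal{L} V = -2\lambda \|(u,v)\|^2 - 4 (u\odot v)^\top \nabla G(u\odot v) + 6d/\beta \le -2\lambda V + (2\lambda + 4K + 6d/\beta),
\end{equation*}
so the drift holds with $a = 2\lambda$ and $b = 2\lambda + 4K + 6d/\beta$.

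The main obstacle is the minorization: sub-level sets $\{V \le R\}$ are \emph{not} compact subsets of $\mathcal{X} = \real_+^d \times \real^d$, since they extend to the singular boundary $\{u_i = 0\}$, so the standard strong Feller plus irreducibility argument does not apply directly on $\mathcal{X}$. I would circumvent this by working with the Cartesian reformulation \cref{cyl} on $\real^{3d}$. The lifted Lyapunov function $\tilde V(y,z,v) \eqdef 1 + \|(y,z,v)\|^2$ coincides with $V$ on cylindrical fibres, satisfies the analogous drift inequality for the generator of \cref{cyl} (using $\vec{x}^\top \nabla G_3(\vec{x}) = 2 x^\top \nabla G(x) \ge -2K$ from \cref{wp}), and now has compact sub-level sets in $\real^{3d}$. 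Combined with the strong Feller property and irreducibility of $\vec{x}_t$ established in \cref{wp}, this yields minorization by classical arguments \cite{Meyn2012-oq} and hence geometric ergodicity of $\vec{x}_t$ in $L^\infty_{\tilde V}(\real^{3d})$ toward the Gibbs measure $\tilde \pi \propto \exp(-\beta(\lambda \|\vec{x}\|^2/2 + G_3(\vec{x})))$.

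Finally, I would transfer the ergodicity bound back to $(u,v)_t$. Integrating out the angular variable in each $(y_i, z_i)$-plane in polar coordinates shows that the marginal of $\tilde \pi$ under the cylindrical projection $(y,z,v) \mapsto ((y^2 + z^2)^{1/2}, v)$ is exactly $\pi$. For any $\phi \in L^\infty_V(\mathcal X)$, the cylindrical lift $\tilde\phi(y,z,v) \eqdef \phi((y^2+z^2)^{1/2}, v)$ then satisfies $\tilde \pi(\tilde\phi) = \pi(\phi)$ and $\|\tilde\phi\|^\infty_{\tilde V} = \|\phi\|^\infty_V$; moreover by \cref{agree}, the cylindrical projection of $\vec{x}_t$ started from $\vec{x}_0 = (U_0, 0, V_0)$ is equal in law to $(u,v)_t$ started from $(U_0, V_0)$, so $\mathbb{E}[\tilde\phi(\vec{x}_t) \mid \vec{x}_0] = (e^{\mathcal{L} t} \phi)((U_0, V_0))$. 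Applying the $\vec{x}_t$-ergodicity bound to $\tilde\phi$ at $\vec{x}_0 = (U_0, 0, V_0)$ then delivers the stated estimate with the same rate $\rho$ and constant $C$.
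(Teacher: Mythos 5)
Your proof is correct and follows the same Lyapunov--minorization (Harris) skeleton as the paper's, which invokes \cite[Theorem 2.25]{Lelievre2016-hx} with essentially the identical drift computation (your constant $6d/\beta$ is in fact the correct tally of the It\^o corrections from the proof of \cref{wp}; the paper writes $2d+4d/\beta$ at this point, a harmless bookkeeping slip). The genuine difference is in how the minorization is justified. The paper asserts that strong Feller plus irreducibility of $(u,v)_t$ yields minorization ``for any compact set'' via \cite[Lemma 2.3]{mattingly2002ergodicity}, leaving implicit that the relevant sub-level sets $\{V\le R\}$ are \emph{not} compact in $\mathcal X=\real_+^d\times\real^d$, since they accumulate at the singular boundary $\{u_i=0\}$. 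You flag this explicitly and resolve it by running the entire Harris argument for the Cartesian lift \cref{cyl} on $\real^{3d}$ --- where $\{\tilde V\le R\}$ is genuinely compact, the drift inequality holds with the same constants via $\vec x^\top\nabla G_3(\vec x)=2x^\top\nabla G(x)\ge -2\clower$, and strong Feller and irreducibility were already established in \cref{wp} --- and then transporting the $L^\infty_{\tilde V}$ estimate back through the cylindrical projection, using that the radial marginal of the Gibbs measure of \cref{cyl} is exactly $\pi$ (the Jacobian $u_i\,du_i\,d\theta_i$ produces the $\prod_i u_i$ factor in \cref{pi}) and that \cref{agree} identifies the two semigroups on cylindrical observables. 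This buys a self-contained justification of the minorization that the paper's one-line citation does not quite deliver (and is, in effect, what the paper's reliance on the lift in \cref{wp} implicitly presupposes), at the cost of routine projection bookkeeping; the rate and constants are unaffected.
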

\begin{proof} In discrete time, this follows \cite[Theorem 2.25]{Lelievre2016-hx} if  a Lyapunov condition holds for $V$  and a minorization condition holds for the transition density.  The extension to continuous time  \cite[Equation 2.77]{Lelievre2016-hx} is achieved by writing $t=k \tstep +s$ to create a  Markov chain on discrete time $k$ and correction factor depending on $s\in(0,\tstep)$.

The process $(u,v)_t$ is strong Feller and irreducible by \cref{wp}. Hence,  the minorization conditions hold for any compact set~\cite[Lemma 2.3]{mattingly2002ergodicity}. 

 Under \cref{reg1},  $x^\top \nabla G(x) \ge -\ccc $ for some constant $\ccc \in \mathbb{R}$.   We have, from \cref{main},
    \begin{align*}
    \mathcal L V(u,v) 
    &= -2\,\lambda\,\|(u,v)\|^2  - 4(u \odot v)^\top \nabla G(u \odot v) + 2d+4\,d/\beta
    \leq -2\,\lambda\,\|(u,v)\|^2  +4\ccc+ 2d+4\,d/\beta.\end{align*}
    Let $c$ be a constant, such that $0 < c < 2\lambda$ and $b=4\ccc+2d+4d/\beta  + c > 0$. We  have $\mathcal L V(u,v) \leq -c\,V(u,v) + b$ as required. 
    \end{proof}

\section{Discretization}\label{sec:disc}

\rev{We consider the following implicit-explicit time-stepping approximation: for time step $\deltat$, we seek an approximation $(u,v)^k$ to $(u,v)_{k \tstep}$ via
\begin{gather}
\begin{split}
u^{k+1}-u^k&=-\lambda\,   u^{k+1}\,\deltat - v^k \odot\nabla G(u^k \odot v^k)\,
\deltat
 +\frac{1}{\beta\, u^{k+1}}\,\deltat+ \sqrt{\tfrac{2}{\beta}} \,\deltaw^1_{k},\\
v^{k+1}-v^k&=-\lambda\,  v^{k+1}\,\deltat -  u^k\odot \nabla G(u^k \odot v^k)\,\deltat+  \sqrt{\tfrac{2}{\beta}}\,\deltaw^2_k,
\label{discrete_scheme}
\end{split}
\end{gather}
where $\deltaw^i_k=W^i_{t_{k+1}}-W^i_{t_k}$. This scheme is inspired by the numerical scheme of  ~\cite{Dereich2011-tp} for the CIR process (corresponding to the $u$ update with  $G\equiv 0$). It is explicit in the potential term $G$; the implicitness in the regularisation terms ensures positivity in $u$
and is straightforward to implement 
 since   $$ u^{k+1} = \frac{\tilde{u}^k + \sqrt{\tilde{u}^k\odot \tilde{u}^k + 4\deltat(1+\deltat\lambda)/\beta}}{2(1+\deltat\lambda)}$$ with $\tilde u^k =u^k - \deltat\, v^k \odot \nabla G(u^k\odot v^k) + \sqrt{\tfrac{2}{\beta}}\,\deltaw^1_k$.


For convenience, define the explicit part of the drift term as $f$, so that 
\begin{equation}\label{eq:f}
    f(u,v) \eqdef \binom{f_1(u,v)}{f_2(u,v)} \eqdef \binom{v\odot \nabla G(u\odot v)}{u\odot \nabla G(u\odot v)}.
\end{equation}

\begin{remark}[Relation to proximal gradient Langevin]\label{rem:prox}
By defining $\Psi(u,v) \eqdef \frac{\lambda}{2} \norm{(u,v)}^2 -\beta \log(u) $,  $(u, v)^{k+1} = \Pi_{\deltat}(z^{k+1})$,
one can write \eqref{discrete_scheme}  as 
    \begin{align}\label{PIPI}
    z^{k+1} = \Pi_{\deltat}(z^{k}) - \deltat\,  f(\Pi_{\deltat}(z^{k})) + \sqrt{{2/\beta}}\,\Delta W_k,
\end{align}
where $\Pi_{\deltat}(z) = \mathrm{Prox}_{\deltat \Psi}(z) = \mathrm{argmin}_{x} (\deltat \,\Psi(x) + \frac12 \norm{x-z})$ is a non-expansive operator.
This corresponds to an application of the proximal gradient Langevin (PGLD) algorithm introduced in \cite{durmus2019analysis}. Note however that their analysis does not extend to our setting since $f$ is non-convex and lacks global Lipschitz properties. 
\end{remark}
We  establish convergence  of our discretized scheme on a fixed time interval $[0,T]$ under the following assumptions. 
The assumption on the initial distribution $\rho_0$ is satisfied for example if $\rho_0$ has a continuous density with compact support.}
\rev{\begin{assumption}\label{reg3} 
\begin{itemize}
    \item[(i)] $G$ satisfies \cref{reg1} and $\nabla G$ is $\clip$-Lipschitz.
    \item[(ii)] $(u^0,v^0)$ and $(u(0),v(0))$ follow an initial distribution $\rho_0\ll \pi $ such that $\frac{\rho_0}{\pi}$ is uniformly bounded.
\end{itemize}
\end{assumption}
}

For our analysis, we will frequently use the local Lipschitz property of $f$.
\begin{lemma}\label[lemma]{Lip_f}
      Under \cref{reg3}, 
      the function $f$ defined in \eqref{eq:f} is locally Lipschitz on the ball of radius $R$ with Lipschitz constant $C_R = 2\,L\,R^2  + \cinfty$:
$
\norm{f(u,v) - f(u',v')} \leq  C_R \norm{(u-u', v-v')}
$
for all $(u,v),(u',v')\in \mathcal X$ such that $\norm{(u,v)}, \norm{(u',v')} \leq R$.
  \end{lemma}
  \begin{proof}  Under \cref{reg3}, $\nabla G$ is $L$-Lipschitz, 
   $B \eqdef \sup_{x} \norm{\nabla G(x)}<\infty$, 
      Hence, for $v,\tilde{v}$ such that $\|v\|\leq R,\|\tilde{v}\|\leq R$,  $\norm{v \odot\nabla G(u\odot v) -\tilde v \odot\nabla G(\tilde u\odot \tilde v)}$ is upper bounded by
\begin{align*}
 \cinfty \norm{\tilde v - v} + \norm{v} \norm{\nabla G(u\odot v) - \nabla G(\tilde u\odot \tilde v)}& \leq \pa{\cinfty + 2 \clip R^2} \Bigl(\norm{v-\tilde v} \vee\norm{u-\tilde u}\Bigr).\qedhere
\end{align*}
    \end{proof}

\subsection{Convergence to stationary distribution}

We show in \cref{NGE} that  the numerical approximation $(u,v)^k$ indeed has an invariant distribution. 

We first introduce some notation.
For $n>0$, $x\in \mathcal X$, and Borel sets $A\subset \mathcal X$, let $P^n(\mu_0,A) \eqdef \PP(X^n \in A \mid X^0 \sim \mu_0)$ for initial distribution $\mu_0$. For $\phi\colon \mathcal X\to \real$, 
 $$(P^n\phi)(x) \eqdef \int_{\mathcal X} \phi(y)P^n(\delta_x,dy) = \EE[\phi(X^n) \mid X^0 = x].$$
Given $\BETA>0$,  define $\norm{\phi}_\BETA \eqdef \sup_{x\in\mathcal X} \frac{\abs{\phi(x)}}{1+\BETA \,V(x)}$ and
$$
d_\BETA(x,y) \eqdef \begin{cases}
    0, &x=y,\\
    2+\BETA\, V(x)  + \BETA\, V(y), &x\neq y.
\end{cases}
$$
The argument is based on showing  $P^n$ is a contraction with respect to
$$
d_\BETA(\mu_1,\mu_2)\coloneq \sup_{\threenorm{\phi}_\BETA=1} \int_\mathcal X\phi(x)\,d(\mu_1-\mu_2)(x), \quad\text{for measures $\mu_1,\mu_2$}
$$
with $\threenorm{\phi}_\BETA\eqdef\sup_{x\ne y}\abs{\phi(x)-\phi(y)}/d_\BETA(x,y)$. Following \cite{hairer2011harris}, $d_\BETA$ is equivalent to the weighted TV norm  $\norm{\mu}_\BETA \eqdef \sup_{\norm{\phi}_\BETA\leq 1}\abs{\int_{\mathcal X} \phi(x) \,d\mu(x)}$.

\begin{theorem}\label{NGE}
 
There exists $\bar \alpha \in(0,1)$, $t_0>0$ and $\BETA>0$ such that, for any $\phi\colon \mathcal X\to\real$ with $\norm{\phi}_\BETA \leq 1$,
 \begin{equation}\label{contraction}     
 \abs{P^{\lceil t_0/\deltat\rceil}\phi(x)-P^{\lceil t_0/\deltat\rceil}\phi(y)}\leq \bar \alpha \,d_\BETA(x,y),
 \end{equation}
 holds for all $\deltat$ sufficiently small.
 As a consequence,  there exists an invariant measure $\pi_{\tstep}$ such that,  for any initial distribution $\mu_0$ and any $T>0$,
 \begin{equation}\label{finale}
 \norm{  P^{\lceil T/\deltat\rceil}(\mu_0,\cdot)- \pi_{\deltat}}_\BETA \leq \bar \rho^T \norm{\mu_0-\pi_\tstep}_\BETA,
 \end{equation}
 where  $\rho = \bar \alpha^{1/t_0}$ is independent of $\deltat$.
\end{theorem}

This is a consequence of minorisation and Lyapunov drift conditions, given  in the following lemmas.

\begin{lemma}[drift condition]\label{lem:2ndmoments_discrete}
Let $V(u,v)=1+\|(u,v)\|^2$.
Assume that $\deltat  < \min\{\lambda/\cinfty^2,1/4\lambda\}$, where $\cinfty=\|\nabla G\|_\infty$. Then, the numerical approximation $(u^k,v^k)$ defined by \cref{discrete_scheme} satisfies
    $$
   \EE \bp{ V((u,v)^{k+1})  \mid (u,v)^k}\leq \pp{1-\frac12 \lambda\,\tstep}\,  V((u,v)^k)  +C \,\tstep,
    $$
    where $C\eqdef\frac 12 \lambda +4(8+K\beta+d)/\beta$.

    Let $V_4(u,v) = 1+ \norm{(u,v)}^4$. There exists $C,c,\nu>0$ such that for all $\deltat<c\lambda$,
    $$
    \EE[V_4((u,v)^{k+1})\mid (u,v)^k] \leq (1-\nu\,\tstep) \,V_4((u,v)^k) + C\, \deltat.
    $$
\end{lemma}

\begin{proof} We prove the drift condition for the quadratic functional $V$; the calculation for the quartic $V_4$ is provided in the Appendix \ref{supp_drift}. We have
    \begin{align}
        &(1+ \lambda \deltat) u^{k+1} - \frac{\deltat}{\beta u^{k+1}} = u^k -  v^k \odot \nabla G(u^k\odot v^k)\, \deltat + \sqrt{\frac{2}{\beta}} \,\deltaw^1_k, \label{eq:drift_comp1}\\
         &(1+ \lambda \deltat) v^{k+1} = v^k -  u^k \odot \nabla G(u^k \odot v^k)\,\deltat + \sqrt{\frac{2}{\beta}} \,\deltaw^2_k.
    \end{align}
    We square the norm and expectation of the above, use the independence between the increments $\deltaw^i_k$ and the information at time $t_k$, and rely on the trivial fact that \rev{$\|(u^{k+1})^{-1} \odot (u^{k+1})\|^2 = d$}. 
    As a result, conditional on step $k$, we obtain 
    \begin{align*}
        &(1+ \lambda \,\deltat)^2 \EE \bp{\snorm{u^{k+1}}^2\mid (u,v)^k} - 2\deltat (1+\lambda\, \deltat)\beta^{-1} d  \leq 
        {\frac{2}{\beta}}\, \EE \snorm{\deltaw^1_k}^2 
          + \snorm{u^k - \deltat \,v^k \odot \nabla G(u^k \odot v^k) }^2, \\
         &(1+ \lambda \,\deltat)^2 \EE\bp{\snorm{v^{k+1}}^2\mid (u,v)^k}  = \frac{2}{\beta} \,\EE\snorm{ \deltaw^2_k}^2  + \snorm{v^k - \deltat\, u^k \odot \nabla G(u^k\odot v^k)}^2.
    \end{align*}
Using the assumption $x^\top \nabla G(x) \geq -\ccc$ and $\cinfty=\norm{\nabla G}_\infty$, we obtain 
    \begin{align*}
        \snorm{u^k-\deltat \,v^k \odot \nabla G(u^k\odot  v^k)}^2 &\leq \snorm{u^k}^2  + \deltat^2 \cinfty^2 \snorm{v^k}^2 - 2 \,\Delta t\,(u^k \odot v^k)^\top \nabla G(u^k \odot v^k)\\
        &\leq \snorm{u^k}^2  + \deltat^2 \cinfty^2 \snorm{v^k}^2 +2\,\ccc\,\deltat.
    \end{align*}
   A similar statement holds for $\norm{v^k-\deltat\, u^k \odot \nabla G(u^k \odot v^k)}^2$.
    Combining this with $ \EE\norm{ \deltaw^i_k}^2= d\,\Delta t$ , we obtain  
    \begin{align*}
    (1+\lambda \,\deltat)^2 \EE\Bigl[\snorm{(u,v)^{k+1}}^2 \bigm\lvert (u,v)^k\Bigr] \leq (1+\deltat^2 \cinfty^2)\snorm{(u,v)^{k}}^2  + 
    4{ \frac{d\,( 1 + \deltat \,\lambda) + \ccc\,\beta + d}{\beta}}\,\deltat.
    \end{align*}
    Note that
    $
    \frac{(1+\deltat^2 \cinfty^2)}{(1+\lambda \deltat)^2} \leq \frac{(1+\deltat^2 \cinfty^2)}{1+2\lambda \deltat} <1-\frac12\lambda\, \tstep
    $
    provided that $0<\deltat  < \min\{\lambda/\cinfty^2,1/4\lambda\} $. Rewrite this in terms of $V(u,v)=1+\norm{(u,v)}^2$:
        \begin{align*}
        \EE\Bigl[V((u,v)^{k+1}) \Bigm
        \lvert (u,v)^k\Bigr] 
        \leq \pp{1-\frac 12 \lambda \tstep} V((u,v)^{k}) + 
   \pp{\frac 12 \lambda+ 4\frac{4\,(1+\deltat \lambda)+K\,\beta+ d}{\beta (1+\lambda \deltat)^2  }}\,\tstep.
    \end{align*}
Using the property $\tstep\,\lambda\le 1$, we can replace the last term by $C\tstep$ with $C\eqdef\frac 12 \lambda +4(8+K\beta+d)/\beta$.
\end{proof}
By first establishing \cref{escape,coupling}, we prove the coupling condition in \cref{minorisation} for the numerical method.
\begin{lemma}[escape probability]\label{escape}
Fix $0<r<R$. Let  $k_R = \inf\enscond{k}{\snorm{(u,v)^{k}}>R}$ and assume that $\snorm{(u,v)^0} \leq  r$. For some constants $c,C>0$, we have, for  $n\deltat = T$ \rev{ and $\deltat \leq c$},  
\begin{align*}
    \PP(k_R\leq n)
    \leq
    \exp\pa{-\frac{R^2}{\rev{2CT}}}\exp\pa{ \frac{r^2}{\rev{CT}} +\rev{2} }. 
    \end{align*}
    The constant $C$ depends only on $ \beta$ and the coercivity constant of $G$.
\end{lemma}
\begin{proof}
The start of this proof is similar to the proof of Lemma \ref{lem:2ndmoments_discrete} but we retain the Brownian increments: taking the square-norm of \eqref{eq:drift_comp1}
and using $
    \norm{u^k - v^k \odot \nabla G(u^k \odot v^k) \deltat}^2 \leq \norm{u^k}^2 + \deltat^2 B^2 \norm{v^k}^2 + 2K \deltat$,
    we obtain
$$
(1+\lambda \deltat)^2\snorm{u^{k+1}}^2 \leq \snorm{u^k}^2 + \deltat^2 B^2 \snorm{v^k}^2 + 2K \deltat + Z^1_k + 2d\deltat/\beta + 2d\deltat (1+\lambda \deltat) /\beta,
$$
where 
$Z^1_k\eqdef \frac{2}{\beta}
(\norm{\Delta W^1_k}^2 - d\deltat)+ \sqrt{\frac{8}{\beta}}\dotp{\Delta W^1_k}{u^k - v^k \odot \nabla G(u^k \odot v^k) \deltat}$.

A similar computation for $v^k$ with $Z^2_k\eqdef \frac{2}{\beta}(\norm{\Delta W^2_k}^2-d\deltat)+ \sqrt{\frac{8}{\beta}}\dotp{\Delta W^2_k}{v^k - u^k \odot \nabla G(u^k \odot v^k) \deltat}$ yields
$$
(1+\lambda \deltat)^2\snorm{v^{k+1}}^2 \leq \snorm{v^k}^2 + \deltat^2 B^2 \snorm{u^k}^2 + 2K \deltat + Z^2_k + 2d\deltat/\beta.
$$
We therefore have constants $c,C>0$ such that for all $\deltat\leq c$,
$$
(1+\lambda \deltat)^2 \snorm{(u,v)^{k+1}}^2 \leq (1+B^2 \deltat^2)\snorm{(u,v)^{k}}^2 + Z_k^1+Z_k^2 + C\deltat.
$$
Let $\xi_k \eqdef Z^1_k + Z^2_k$ and let $\mathcal{F}_n$  be the $\sigma$-algebra generated by $\Delta W^j_k$ for $j=1,2$ and $k=1,\dots,n-1$.
Note that $\EE[\xi_k\mid\mathcal{F}_k] = 0$ and $\EE[\xi_k^2\mid \mathcal{F}_k]\leq C ( \deltat^2 +(\snorm{u^{k}}^2 + \snorm{v^{k}}^2)\deltat)$ for some $C>0$.
Let $M_n\eqdef \sum_{k< n} \xi_k$. This is a martingale with quadratic variation $\langle M\rangle_n$ satisfying
\begin{equation}\label{eq:quad_var_bound}
    \langle M \rangle_n = \sum_{k<n} \EE\bp{\xi_{k}^2 \mid \mathcal{F}_k} \leq C \sum_{k<n} \pp{ \deltat^2 +(\norm{\smash{u^{k}
}}^2 + \norm{\smash{v^{k}}}^2)\,\deltat}.
\end{equation}
Using this notation, we can rewrite the above inequality as
\begin{equation}\label{eq:bis1}
          (1+\lambda\deltat)^2  \,\snorm{(u,v)^{k+1}}^2 \leq \pp{1+B^2 \deltat^2}\snorm{(u,v)^{k}}^2 + C\deltat + \xi_k - \frac{\alpha}{2}\EE[\xi_k^2\mid \mathcal{F}_k]  + \frac{\alpha}{2}\EE[\xi_k^2\mid\mathcal{F}_k].
\end{equation}

\textbf{Absorbing the variation term.}
 Equation \eqref{eq:bis1} can be written as
\begin{equation*}
           \snorm{(u,v)^{k+1}}^2 \leq \snorm{(u,v)^{k}}^2 + C\deltat + \xi_k - \frac{\alpha}{2}\,\EE[\xi_k^2\mid\mathcal{F}_k]  + \frac{\alpha}{2} \pp{\langle M\rangle_{k+1} - \langle M\rangle_{k}}.
\end{equation*}
Summing this over the $k$'s implies that
$$
\snorm{(u,v)^{k}}^2 - \frac{\alpha}{2} \langle M\rangle_{k} \leq \snorm{(u,v)^{0}}^2  + C\,k\,\deltat + \sum_{j< k}( \xi_j - \frac{\alpha}{2}\,\EE[\xi_j^2\mid\mathcal{F}_j]).
$$
\rev{
By moving $\frac{\alpha}{2} \langle M\rangle_{k} $ to the RHS and applying \eqref{eq:quad_var_bound}, it follows that,
\begin{align*}
    \snorm{(u,v)^{k}}^2 
    &\leq \snorm{(u,v)^{0}}^2  + C\,k\,\deltat + \pa{M_k - \frac{\alpha}{2}\,\langle M\rangle_k}  +\frac{\alpha C}{2} k \deltat^2  + \frac{\alpha C}{2} \sup_{j<k}\snorm{(u,v)^j}^2 k \deltat.
\end{align*}
So, for all $n\deltat \leq T$ and all $k\leq n$,
$$
\left(1-\frac{\alpha C T}{2}\right) \sup_{k\leq n}\snorm{(u,v)^{k}}^2  \leq \snorm{(u,v)^{0}}^2  + C(1+ \alpha \deltat/2)\,T + \sup_{k\leq n} \pa{M_k - \frac{\alpha}{2}\,\langle M\rangle_k}.
$$
}

\textbf{Estimate on escape probability.}
We now apply \cite[Lemma~A.1]{Lamba2007AdaptiveEulerMaruyama}:
Let \rev{$\alpha = 1/(C\,T)$}.
For $n\deltat= T$, we have
\begin{equation}\label{eq:prob_escap_pre}
\PP\pa{\frac12\sup_{k\leq n} \snorm{(u,v)^{k}}^2  \geq \snorm{(u,v)^{0}}^2  + \rev{2}CT + R} \leq \PP\pa{\sup_{k\leq n}\pp{M_k -  \frac{\alpha}{2}\langle M\rangle_k} \geq   R} \leq  \rev{e^{- R/(C\,T)}}.
\end{equation}

So, for $n\deltat = T$, as $k_R$ is the first escape time from the ball of radius $R$,
\begin{align*}
    \PP(k_R\leq n)
    &= \PP\pa{\sup_{k\leq n} \frac12 \snorm{(u,v)^k}^2  \geq \frac{R^2}{2}  }
    \leq 
    \exp\pa{-\frac{R^2}{\rev{2CT}} + \frac{\snorm{(u,v)^0}^2}{\rev{CT}} +\rev{2} }.\qedhere
\end{align*}
\end{proof}

The first coupling lemma considers the  coupling  event (i.e., $X^n=Y^n$) of two realisations $X^n, Y^n$ of the numerical method  for paths that do not leave the ball (i.e., $X^k, Y^k \in B(0,R)$ for $k=0,\dots,n-1$).
\begin{lemma}[coupling in the ball] \label{coupling} Consider initial data $x,y\in \mathcal X$ with $\norm{x},\norm{y}\le r$, some $r>0$. Consider  two solutions $(u,v)^n$ of the numerical method driven by different Brownian motions, denoted $X^n, Y^n$ with initial condition $X^0=x$, $Y^0=y$. For $R>0$ and $t_0>0$, let 
$E=\{X^1,\dots,X^{n-1},Y^1,\dots,Y^{n-1}\in B(0,R)\}$ and $\tilde E=E\cap \{X^n\ne Y^n\}$ where $n\,\Delta t=t_0$.

We may couple $X^n, Y^n$ such that the following property holds: if $t_0=1/C_R$ and $\Delta t\in(0,t_0]$ and $r\sqrt{ 3\,C_R}\ge 1$, then $\PP(\tilde E)\le 1-\tilde\delta$ with
 $\displaystyle
\tilde\delta
=  \frac{2}{\sqrt{2\pi}(1+r\sqrt{3\,C_R})}  \exp\left(-\frac{3\,r^2}{2} \, C_R\right)$.

\end{lemma}

\begin{proof}
The assumptions for  \cite[Theorem~4]{debortoli2020convergence} hold:  Assumption A1 holds due to the non-expansivity of $\Pi_\tstep$ in \cref{PIPI}; and Assumption A2($A$) holds as the drift  is  Lipschitz  on $B(0,R)$ with Lipschitz constant $L=C_R$ (\cref{Lip_f}). Hence, we may couple $X^n,Y^n$ so that $\PP(\tilde E)\le 1-\delta$ with 
\begin{align}\label{express_delta}
\delta= 2\inf_{x,y\in B(0,r)}\Phi\pp{-\|x-y\|/2\,\Xi_n^{1/2}(C_R)},
\end{align}
where $\Phi$ is the cumulative distribution function for the standard Gaussian, $C_R$ is the Lipschitz constant of the drift  on the set $B(0,R)$, and
$\displaystyle
\Xi_n(L)=\Delta t\sum_{k=1}^n (1+\Delta t\,L)^{-k}$.

For $\mu= 1/(1+\Delta t\,C_R)$, then $1-\mu=\Delta t\,C_R\, \mu$ and 
$$\Xi_n(C_R)= {\Delta t}\,\mu \frac{1-\mu^n}{1-\mu}= \frac{1-\mu^n}{C_R}\ge \frac{1-\exp(- t_0\, C_R/(1+\Delta t\,C_R))}{C_R},\qquad n\Delta t=t_0,
$$
(as $1-1/x\le \log x$ and so $1/(1+x)\le \exp(-x/(1+x))$ for $x>0$). For $t_0=1/C_R$,
$
\Xi_n(C_R)\ge \frac{1}{C_R}(1-\exp(-1/2)) \ge \frac{1}{3\,C_R}.
$
It is well known (\cite{gordon1941values}) that  $\Phi(-x)\ge \frac{1}{\sqrt{2\pi}}\exp\pp{-x^2/2} \frac{1}{x+1/x}$ for $x>0$, and so  
$$ 
\delta
=2\,\Phi\left(-r\, \sqrt{3 \, C_R}\right) 
\ge \frac{2}{\sqrt{2\pi}(1+r\sqrt{3\,C_R})}  \exp\left(-\frac{3\,r^2}{2} \, C_R\right)\eqcolon \tilde \delta,
$$
for $r\sqrt {3 C_R}\ge 1$. We conclude that $\PP(\tilde E)\le 1-\tilde \delta$.
\end{proof}

\begin{corollary}[coupling] \label{minorisation} Under the assumptions of \cref{coupling}, for all $r>0$, there exists $t_0>0$, so that
$\PP(X^n=Y^n) 
\ge \alpha>0$,
 uniformly for $x,y\in B(0,r)$ and $0<\Delta t\le t_0$ with $n\Delta t=t_0$. 
 \end{corollary}
\begin{proof}
Let 
$E=\{X^1,\dots,X^{n-1},Y^1,\dots,Y^{n-1}\in B(0,R)\}$ and $\tilde E=E\cap \{X^n\ne Y^n\}$ where $n\,\Delta t=t_0$. Then, if $C_R$ is sufficiently large, $t_0=1/C_R$ and $\Delta t\in(0,t_0]$, we have $\PP( E)\ge 1-\delta$ and $\PP(\tilde E)\le 1-\tilde\delta$ by \cref{coupling,escape}, with
\begin{align*}
\delta = \rev{2\exp\pp{-\frac{C_R R^2}{2C}}
\exp\pp{ \frac{ C_R \,r^2}{C} +2}},\qquad 
\tilde\delta= \frac{2}{\sqrt{2\pi}(1+r\sqrt{3\,C_R})}  \exp\left(-\frac{3\,r^2}{2} \, C_R\right).
\end{align*}
Notice that $\tilde E^c$ is the union of $\{X^n=Y^n\}$ and $E^c$. Thus,
$\PP(X^n=Y^n) \ge \PP(\tilde E^c)-\PP(E^c)
\ge \tilde\delta- \delta\eqcolon\alpha$. 
Then,
\begin{align*}
\alpha
&= \frac{2}{\sqrt{2\pi}(1+r\sqrt{3\,C_R})}  \exp\left(-\frac{3\,r^2}{2} \, C_R\right)
  -\rev{  2\exp\pp{-\frac{C_R R^2}{2C}}
\exp\pp{ \frac{ C_R \,r^2}{C} +2}}.
  \end{align*}
  \rev{  Neglecting terms in $r$, and using the scaling $R^2 \lesssim C_R \lesssim R^2$ (see Lemma \ref{Lip_f}), we may choose $R$ large enough so $\alpha>0$.}
\end{proof}

We now show the drift and coupling  conditions imply geometric convergence to the invariant measure. 
\begin{proof}[Proof of \cref{NGE}]
From Lemma \ref{lem:2ndmoments_discrete}, 
there exists $\nu>0$ and $C>0$ such that, for $V(x) = \|x\|^2 + 1$ and $V_4(x) = \|x\|^4+1$,
$$P^{n}(V)(x) \leq (1-\nu\,\tstep)^n V(x) + C\,n\,\tstep,\qquad P^{n}(V_4)(x) \leq (1-\nu\,\tstep)^n V_4(x) + C\,n\,\tstep.$$
 The constants $\nu$ and $C$ are independent of $\tstep$ sufficiently small. 
Choose $r>0$ large enough that  $-\frac12\nu + 2C /r^2 <0$. Corresponding to this $r$, choose $t_0$ according to \cref{minorisation}. Then, for $n=\lceil t_0/\deltat\rceil$,
\begin{align}\label{consequence_drift_cond}
P^{n}(V)(x) \leq \gamma \,V(x) + C_1,\qquad P^{n}(V_4)(x) \leq \gamma\, V_4(x) + C_1,
\end{align}
with $\gamma=\exp(-\nu \,t_0)$ and $C_1=C\,t_0$ and $\tstep\le 1/2\nu$ (as $(1-\nu\, \tstep)^n\le \exp(-\nu\,t_0)$). Later, we will exploit the inequality $\gamma+2C_1/r^2<1$ (which holds as $\exp(-x)\le 1-x/2$ for $0<x\le 1$).

We closely follow the proof of \cite{hairer2011harris}: the first part A), where we consider the setting of $V(x) + V(y)\geq r^2$, uses only the drift condition and is identical but repeated here verbatim only for completeness; the second part, for the case $V(x) + V(y)> r^2$, makes a small adaptation of the proof to exploit our coupling probability.

\textbf{A) Using the drift condition.}
    To show the contraction \cref{contraction} when $V(x) + V(y)\geq r^2$, we simply make use of the drift condition and choose $\BETA$ accordingly:
    \begin{align*}
        P^{n}(\phi)(x)-P^{n}(\phi)(y) &\leq 2+\BETA\, P^n(V)(x) + \BETA\, P^n(V)(y)
        \leq 2 + \BETA \,\gamma\,(V(x)+V(y)) + 2\,\BETA\, C_1,
    \end{align*}
    where we make use of $\norm{\phi}_\BETA\leq 1$ for the first inequality and the drift condition for the second inequality. Since $V(x) + V(y)\geq r^2$, by choosing $\gamma_0 = \gamma + 2C_1/r^2 <1$, we have
    $$
    2 + \BETA \gamma(V(x)+V(y)) + 2\BETA C_1 \leq 2 + \BETA\gamma_0(V(x)+V(y)) \leq 2\gamma_1 + \BETA\gamma_1 (V(x)+V(y)),
    $$
    where the last inequality is due to $ 2(1-\gamma_1 )\leq \BETA (\gamma_1-\gamma_0) r^2 \leq   \BETA (\gamma_1-\gamma_0) (V(x)+V(y))$ by choice of $\gamma_1\eqdef\frac{2 + \BETA \gamma_0 r^2}{(2+\BETA  r^2 )}  <1 $.

\textbf{B) Using the coupling  probability.}
    We now consider the setting where $V(x)+V(y)\leq r^2$, so $x,y\in B(0,r)$ and \cref{minorisation} yields $\PP(X^n\ne Y^n) \le 1-\alpha$.   
Given a test function $\phi$ such that $\norm{\phi}_\BETA \leq 1$,
    \begin{align*}
     \EE[(\phi(X^{n}) - \phi(Y^{n})) 1_{\{X^n\neq Y^n\}}]
     &
    \leq  2\,\PP(X^n\neq Y^n) + \BETA\,\EE\bp{(V(X^n) + V(Y^n)) 1_{\{X^n\neq Y^n\}}
    }\\
    &\leq 2 (1-\alpha) +\BETA \sqrt{(1-\alpha)}  \sqrt{\EE[V(X^n)^2 + V(Y^n)^2]}.
    \end{align*}
Using the drift condition on $V_4$, we deduce that
    \begin{align*}
    \EE[\phi(X^{n}) - \phi(Y^{n}) ]&\leq 2(1-\alpha) + \BETA \sqrt{(1-\alpha)} \sqrt{\gamma V(x)^2 +\gamma  V(y)^2 + 2C_1}\\
    &\leq 2(1-\alpha) + \BETA \sqrt{(1-\alpha)\gamma} \pa{V(x) + V(y) + \sqrt{2C_1/\gamma}}.
    \end{align*}
    Choose $\BETA$ such that $2(1-\alpha) +\BETA \sqrt{2C_1 (1-\alpha)} <2(1-\tilde \alpha)$ for some $0<\tilde \alpha<\alpha$; that is,
    $   \BETA  < 2(\alpha - \tilde \alpha)/\sqrt{2C_1 (1-\alpha)}$.
    With $\gamma_2 \eqdef \max((1-\tilde \alpha), \sqrt{(1-\alpha)\gamma})$, we now have
    $$
    \abs{P^{n}(\phi)(x)-P^{n}(\phi)(y) }\leq 2(1-\tilde \alpha) + \BETA \sqrt{(1-\alpha)\gamma}(V(x) + V(y)) \leq \gamma_2 d_\BETA(x,y).
    $$
    
     Putting A) and B) together, we see $d_\BETA(P^n(\mu_1,\cdot),P^n(\mu_2,\cdot))\le \bar\alpha\, d_\BETA(\mu_1,\mu_2)$, and $P^n$ is a contraction  with respect to $\norm{\cdot}_\BETA$ with contraction constant $\bar \alpha \eqdef \min(\gamma_1,\gamma_2)$. This implies geometric convergence to the invariant measure in the equivalent weighted TV norm as stated in \cref{finale}.
\end{proof}

\subsection{Finite-time convergence of discretized scheme}\label{sec:disc_conv_inv}

 For $t_k=k\,\tstep$, let  $e_1(t_k) \eqdef u_{t_k} - u^k$ and $e_2(t_k) \eqdef v_{t_k} - v^k$, and $e(t_k) \eqdef (e_1(t_k), e_2(t_k)) $. Our main convergence result is as follows.

\begin{theorem}\label{thm:error_discrete}
    Suppose that \cref{reg3} holds. Then, for any $R>0$,
      $$
\EE\bp{ \sup_{0\leq t_k\leq T} \norm{e(t_k)} }\leq  c_R\,d \,\exp\pa{2\,T\, C_R}  \deltat^{\frac12-\epsilon} +  \rev{ C \,\sqrt{T}\, e^{-C\,(R^2/T)}},
$$
where $c_R$ and $C_R$ depends on $R$.  We can choose $R$ and $\tstep$ to make the right-hand side arbitrarily small, and hence $\EE\bp{ \sup_{0\leq t_k\leq T} \norm{e(t_k)} }\to 0$ as $\tstep\to 0$.
\end{theorem}

The rest of this subsection is dedicated to proving \cref{thm:error_discrete}. 
Before proceeding with the proof, we first point out the two main technical challenges that we address.

\textit{Limited inverse moments.} As mentioned, our implicit-explicit scheme with $G\equiv 0$ is  the same as the scheme proposed in \cite{Dereich2011-tp}, who studied the discretization scheme for the Lamperti-transformed version of the CIR process \eqref{eq:cir1} under the parameter choice $2\kappa\mu>\theta^2$. Their main result is the strong error bound $\EE\biggl[\sup_{0\leq t_k \leq T} \norm{ u_{t_k} - u^k}\biggr] = \Oo(\deltat^{\frac12}).$ Their analysis required inverse-moment bounds of the form $\EE[u_t^{-2}]<\infty$: this is \textit{false} when $2\kappa\mu = \theta^2$, which is precisely the case for our $u$ process when $G\equiv 0$. 

    To overcome this, we derive inverse-moment bounds of order up to but excluding 2 (we will make use of \cref{cor} and \cref{timeregbnd} in our proof). As a result, in the case where $G\equiv 0$, our proof shows that the discretized CIR process with parameters $2\kappa\mu =\theta^2$  satisfies for any $\epsilon>0$, the strong error bound $\EE\biggl[\sup_{0\leq t_k \leq T} \norm{ u_{t_k} - u^k}\biggr] = \Oo(\deltat^{\frac12-\epsilon})$. We remark that similar ideas and error bounds were applied in \cite{hutzenthaler2014strong}.

\textit{Lack of global Lipschitzness.} The drift in \cref{discrete_scheme} comprises 
the explicit part $f$ and implicit part $S_\lambda(u) \eqdef \frac{1}{\beta\, u} - \lambda u$. The second difficulty is that  neither $S_\lambda$ nor $f$ are globally Lipschitz.  In particular, $f$ is at best only locally Lipschitz. To handle this, we will follow the analysis of \cite{higham2002strong} to stop our process whenever $\norm{(u,v)}\geq R$ and upper bound $\PP(\norm{(u,v)_t}\geq R)$. Intuitively, this can be done since our stationary distribution has exponentially decaying tails.

The proof of \cref{thm:error_discrete}  is a consequence of \cref{error_under_bounded_ball} below, which shows convergence for a stopped process confined to a ball of radius $R$. While we manage to show $\tstep^{1/2-\epsilon}$ rate for a stopped process (restricted to the ball of radius $R$), our main convergence result, \cref{thm:error_discrete}, does not provide a rate because our error bound has an exponential dependence on $C_R$ and hence $R$.

Define the following stopping times
\begin{align}\label{st}
    k_R \eqdef \inf\enscond{k\geq 0}{\snorm{( u, v)^k} \geq R}, \quad \gamma_R \eqdef \inf\enscond{t\geq 0}{\norm{(u,v)_t} \geq R}
\end{align}
and $\theta_R \eqdef k_R \deltat \wedge \gamma_R$.
We have estimates on the tail behaviour of $k_R$ in \cref{escape};  similar estimates holds for $\gamma_R$.
\rev{
\begin{lemma}\label{cts-escape} For some $c_1,c_2,c_3>0$ independent of $T$,$R$ and $r$, the following holds for $x\in B(0,r)$,
    $$
    \PP\pp{\gamma_R<T}\le \exp(-c_1\,R^2)\,\exp(c_2 (r^2+c_3\,T)).
    $$
\end{lemma}
}
\begin{proof}
    A calculation with It\^o's formula for $V(u,v)=1+\|(u,v)\|^2$ shows that, for some $C>0$
    $
    dV(x_t)\le - 2\,\lambda\, V(x_t)\,dt+C\,dt+ d M_t,$
    where $M_t=2\sqrt{2/\beta}\int_0^ t\langle{x^t, dW_t}\rangle$. This is a martingale with  quadratic variation
    $
    \langle M\rangle_t
    =\frac{8}{\beta}\int_0^t \norm{x_s}^2 \,ds
    $.
    For $\alpha=\lambda\,\beta/4$,
    \begin{align*}
    V(x_t)&\le V(x_0)-2\,\lambda \int_0^t V(x_s)\,ds+\frac{1}{2}\alpha\frac{8}{\beta} \int_0^t \norm{x_s}^2\,ds+C\, t+M_t-\frac{1}{2}\alpha\langle M\rangle_t\\
    &\le V(x_0) +C\,t +M_t -\frac 12\alpha \langle M\rangle_t.
    \end{align*}
    Then, by Doob's martingale inequality,
     $$
    \PP\pp{\sup_{0\le t\le T}\pp{V(x_t)-V(x_0)- C\,t}  \ge  R} \le
    \PP\pp{\sup_{0\le t\le T}\pp{M_t-\frac 12 \alpha\langle M\rangle_t}  \ge  R} \le
    \exp(-R\,\alpha).
    $$
    As $\gamma_R$ is the time of first escape from $B(0,R)$,  $V(x_{\gamma_R})=R^2+1$ and 
   \begin{align*}
    \PP\pp{\gamma_R<T}&\le
    \PP\pp{\sup_{0\le t\le T}\pp{V(x_t)-V(x_0)- C\,t}  \ge  R^2-\norm{x_0}^2-c\,T}\le \exp(-(R^2-r^2-cT)\lambda\,\beta/4)\\
    &\le \exp(-R^2\,\lambda\,\beta/4)\,\exp((r^2+C\,T)\lambda\,\beta/4).\qedhere
    \end{align*}
\end{proof}
Up to time $\theta_R$, we have the following error.
\begin{proposition}\label{error_under_bounded_ball}
  Let the assumptions of \cref{thm:error_discrete} hold and $\deltat \leq \frac{1}{2C_R}$.  
    For all $\epsilon>0$, 
    \begin{equation}
        \EE\bp{\sup_{0\leq t_k\leq T} \norm{e(t_k\wedge \theta_R)}} \leq  c  \,\exp(2\,T\, C_R)\, \deltat^{\frac12-\epsilon},
    \end{equation}
    where $c$ that depends on $\epsilon$, $C_R$, $d$, and $\lambda$.
\end{proposition}
\begin{proof}
In the following, assume that $t_k \leq \theta_R$. 

We first subtract the discretized equations with  the continuous counterparts,
\begin{align*}
&u_{t_{k+1} } = u_{t_k} +  \int_{t_k}^{t_{k+1}}  \left( -\lambda u_t + v_t\odot \nabla G(u_t\odot v_t) + \frac{1}{u_t}\right) dt + \sqrt{2/\beta}\int_{t_k}^{t_{k+1}}  dW^1_t,\\
&v_{t_{k+1} } = v_{t_k} +  \int_{t_k}^{t_{k+1}}  \left( -\lambda v_t + u_t\odot G(u_t \odot v_t) \right) dt + \sqrt{2/\beta}\int_{t_k}^{t_{k+1}} dW^2_t,
\end{align*}
to obtain
\begin{align}
    \begin{split}    
    e_1(t_{k+1}) &= e_1(t_{k}) - \deltat \,\pa{f_1((u,v)_{t_k}) - f_1((u,v)^k)  }+ \deltat \,\pa{S_\lambda(u_{t_{k+1}}) - S_\lambda(u^{k+1})} + r_{1,k},
    \end{split}    \label{eq:err1}\\
    \begin{split}
    e_2(t_{k+1}) &= e_2(t_{k}) - \lambda\,\deltat\, e_2(t_{k+1})  - \deltat \,\pa{f_2((u,v)_{t_k}) - f_2((u,v)^k)  }  + r_{2,k},
   \end{split}\label{eq:err2}
\end{align}
where
\begin{align*}
r_{1,k} &\eqdef  \int_{t_k}^{t_{k+1}}  \pa{ f_1((u,v)_{t_k}) - f_1((u,v)_t) } dt + \int_{t_k}^{t_{k+1}} \left( S_\lambda(u_t) - S_\lambda(u_{t_{k+1}})\right) \,dt,\\
r_{2,k} &\eqdef \int_{t_k}^{t_{k+1}}  \pa{ f_2((u,v)_{t_k}) - f_2((u,v)_t) } \,dt - \lambda \int_{t_k}^{t_{k+1}} \left( v_t - v_{t_{k+1}} \right)\, dt.
\end{align*}
Taking inner product of \eqref{eq:err1} with $e_1(t_{k+1})$ and \eqref{eq:err2} with $e_2(t_{k+1})$, we have 
\begin{align*}
    \pa{\frac12 + \lambda \,\deltat - \frac{ C_R\,\tstep}{2}} \norm{e_1(t_{k+1})}^2 & \leq \frac12\norm{e_1(t_{k})}^2+ \frac{ C_R\,\tstep }{2} \norm{(e_1(t_{k}),e_2(t_{k}))}^2   + \norm{r_{1,k}}\,\norm{e_1(t_{k+1})}, \\
   \left(\frac12+\lambda \,\deltat - \frac{C_R\,\tstep}{2} \right) \norm{e_2(t_{k+1})}^2 & \leq \frac12 \norm{e_2(t_{k})}^2+   \frac{C_R\,\tstep }{2} \norm{(e_1(t_{k}),e_2(t_{k}))}^2   + \norm{r_{2,k}}\,\norm{e_2(t_{k+1})},
\end{align*}
where we used the Cauchy--Schwarz inequality, that $f_i$ is $C_R$-Lipschitz, and that  $$\bigl(S_\lambda(u_{t_{k+1}}) - S_\lambda(u^{k+1})\bigr)^\top e_1(t_{k+1}) \leq -\lambda \norm{e_1(t_{k+1})}^2$$
 due to monotonicity arguments. Denote $E_k = \norm{(e_1(t_{k}),e_2(t_k)) }$ and  $R_k= \norm{(r_{1,k},r_{2,k})}$ and sum the two equations to find 
\begin{align*}
    \pa{1 + 2\,\lambda \,\deltat -  C_R\,\tstep} E_{k+1}^2 \leq \pa{1+ 2\,C_R \,\deltat } E_k^2 +   2\,E_{k+1} R_k .
\end{align*}
Define $c_1 = \pa{1+ 2\,C_R\, \deltat }/ \pa{1 + 2\lambda\, \deltat - \,C_R\,\tstep}$ and $c_2 = 2/ \pa{1 + 2\,\lambda\, \deltat -  C_R\,\tstep}$. Then, for $t_{k+1}\le \theta_R\wedge M$,
\begin{align*}
    E_{k+1}^2 &\leq c_1 E_k^2 + c_2 E_{k+1} R_k \leq c_1^{k+1} E_{0}^2 + c_2 \sum_{j=0}^{k} c_1^j  E_{k+1-j} R_{k-j}\leq c_2 \sup_{k\leq M\wedge \theta_R} E_k \sum_{j=0}^{k} c_1^j   R_{k-j}.
 \end{align*}
 We therefore have
 $
 \EE\biggl[\sup_{k\leq M \wedge\theta_R} E_k\biggr]\leq c_2 \sum_{j=0}^{M-1} c_1^j \,\EE[R_{k-j}].
 $
 Note that for $\deltat \leq 1/(2C_R)$, we have $c_1\leq (1+2\,\deltat\, C_R)^2$: this is a consequence of the simple inequality $(1-a)^{-1}\leq 1 + 2a$ applied to $a = \deltat\, C_R \leq 1/2$.
Also using the geometric sum formula and the inequality $e^z \geq 1 + z$ for $z\geq 0$, we get
  $$\sum_{j=0}^{M-1} c_1^j \leq \sum_{j=0}^{M-1}  (1+ 2\,\tstep \,C_R)^{2j} \le\frac{\exp(2\, C_R\,M\,\tstep)}{4\, C_R\,\tstep +  4\,C_R^2\,\tstep^2}.$$ 
 On the other hand, $ \EE[R_k] \leq \EE[\norm{r_{1,k}}] + \EE[\norm{r_{2,k}}]$.
 Observe that by \cref{timeregbnd} and the definition $S_\lambda$ in \eqref{eq:f},
 \begin{align*}
   \EE \norm{r_{1,k}}
   &\leq \int_{t_k}^{t_{k+1}} \EE\norm{f_1((u,v)_{t_k}) - f_1((u,v)_t)} + \lambda \EE \norm{u_t - u_{t_{k+1}}}\, dt
    +\frac{1}{\beta} \sum_{i=1}^d  \int_{t_k}^{t_{k+1}}
    \EE\bp{\frac{\abs{u_{i,t}- u_{i,t_{k+1}}}}{ u_{i,t} u_{i,t_{k+1}}}}\, dt\\
    &\leq c\pa{ (C_R+\lambda)\,\deltat^{3/2} + d\,\deltat^{\frac32 -\epsilon}}.
\end{align*} 
 Similarly,
$
\EE[\norm{r_{2,k}}] \leq c\, (C_R+\lambda) \,\deltat^{3/2}.
$
It follows that
$
\EE\biggl[{\sup_{0\leq t_k\leq T\wedge \theta_R} E_k}\biggr]
\leq c_3\,\exp( 2\,C_R\,T) \,\deltat^{\frac12-\epsilon},
$ for a constant $c_3$ that depends on $\lambda$, $C_R$, and $d$, as required.
\end{proof}

We are now ready to prove the main result.
\begin{proof}[Proof of \cref{thm:error_discrete}]
Define $\tau_R\eqdef k_R\deltat$ (see \cref{st}). 
 First note that $ \EE\bp{ \sup_{0\leq t_k\leq T} \norm{e(t_k)}}\leq A+B$ where $A\eqdef  \EE\bp{\sup_{0\leq t_k\leq T} \norm{e(t_k)} 1_{\ens{\tau_R>T, \gamma_R>T}}}$ and $B\eqdef  \EE\bp{ \sup_{0\leq t_k\leq T} \norm{e(t_k)} 1_{\ens{\tau_R< T \text{ or } \gamma_R<T}}}$. By \cref{error_under_bounded_ball}, $A\leq c \,\exp\pa{2\,T\, C_R} \,d \,\deltat^{\frac12-\epsilon} $. Using the Cauchy--Schwarz inequality, we bound $B$ as $ B\leq  \sqrt{\EE\bp{ \sup_{0\leq t_k\leq T} \norm{e(t_k)}^2}} \sqrt{\PP(\gamma_R<T) + \PP(\tau_R<T)}$.
          From \eqref{eq:prob_escap_pre} with \rev{$\alpha=1/(CT)$},
    $$
    \EE\bp{\sup_{0\le t_k\le T}\snorm{(u,v)^k}^2} \leq \snorm{(u,v)^0}^2 + \rev{2}C\,T + \int_0^\infty e^{-\alpha \,r} \,dr,
    $$
    which is uniformly bounded with respect to $\tstep$.  Finally, applying \cref{escape,cts-escape}, \rev{$B\leq C \sqrt{T} e^{-C(R^2/T) }$ for some constant $C>0$}
    which completes the proof. 
    
    \end{proof}

\subsection{Invariant measure}
To  conclude this section, we combine \cref{NGE}  and \cref{main} to show that the stationary distribution $\pi_\tstep$ of $(u,v)^k$ converges to $\pi$. Here $W_1$ denotes the usual Wasserstein 1-norm.

\begin{theorem}[convergence to invariant measure]\label{thm_convergence_to_invariant3}
  Let \cref{reg3} hold. Then, $W_1(\pi_{\tstep}, \pi) \to 0$ as $\tstep\to 0$.
  \end{theorem}
  \begin{proof} 

  By geometric ergodicity of the numerical approximation in \cref{NGE} with $t_k=T$, we have $|\int \phi(x) \,P^k(\pi,dx)-\int\phi(x) \pi_{\tstep}(dx)|\le\bar \rho^T(\norm{\pi}_\BETA+\norm{\pi_\tstep}_\BETA)$ for any $1$-Lipschitz function $\phi$. As $\pi(V)$ and $\pi_{\tstep}(V)$ are bounded uniformly in $\deltat$,  we may choose $c_3$ independent of $\tstep$ such that $\norm{\pi}_\BETA+\norm{\pi_\tstep}_\BETA\le c_3$ and
\begin{equation}
    W_1( P^k(\pi,\cdot),\pi_{\tstep})\le c_3 \bar\rho^T.\label{rain}
\end{equation}

 Denote by $ P^t$ the transition density of the solution $x^t$ of \cref{main}. The initial distribution $\rho_0=\pi$ satisfies Assumption~\ref{reg3}(ii) and \cref{thm:error_discrete} gives
  \begin{equation}
\sup_{0\le t_k\le T}  W_1( P^k(\pi,\cdot), P^{t_k}(\pi,\cdot))
  \to 0,\qquad \text{as $\tstep\to 0$.}\label{thunder}
  \end{equation}
  Divide the error into finite-time numerical error and distance to the long-time average:
\begin{equation*}
W_1(\pi,\pi_{\tstep})
=  W_1( P^k(\pi,\cdot), P^{t_k}(\pi,\cdot))+W_1( P^k(\pi,\cdot),\pi_{\tstep}).
\end{equation*}
We now choose $T=\abs{\log(\delta/2\,c_3)}/\abs{\log\bar\rho}$, in \cref{rain} with $t_k=T$ to ensure 
  $W_1( P^k(\pi,\cdot),\pi_{\tstep})\le  \frac12\delta$; and the timestep $\tstep$ in \cref{thunder} so that  $W_1( P^k(\pi,\cdot), P^{t_k}(\pi,\cdot))
  \le  \frac 12\delta$. We see now that $
W_1(\pi,\pi_{\tstep})\le \delta$.
  \end{proof}

\begin{figure}
    \centering
    \includegraphics[width=0.9\linewidth]{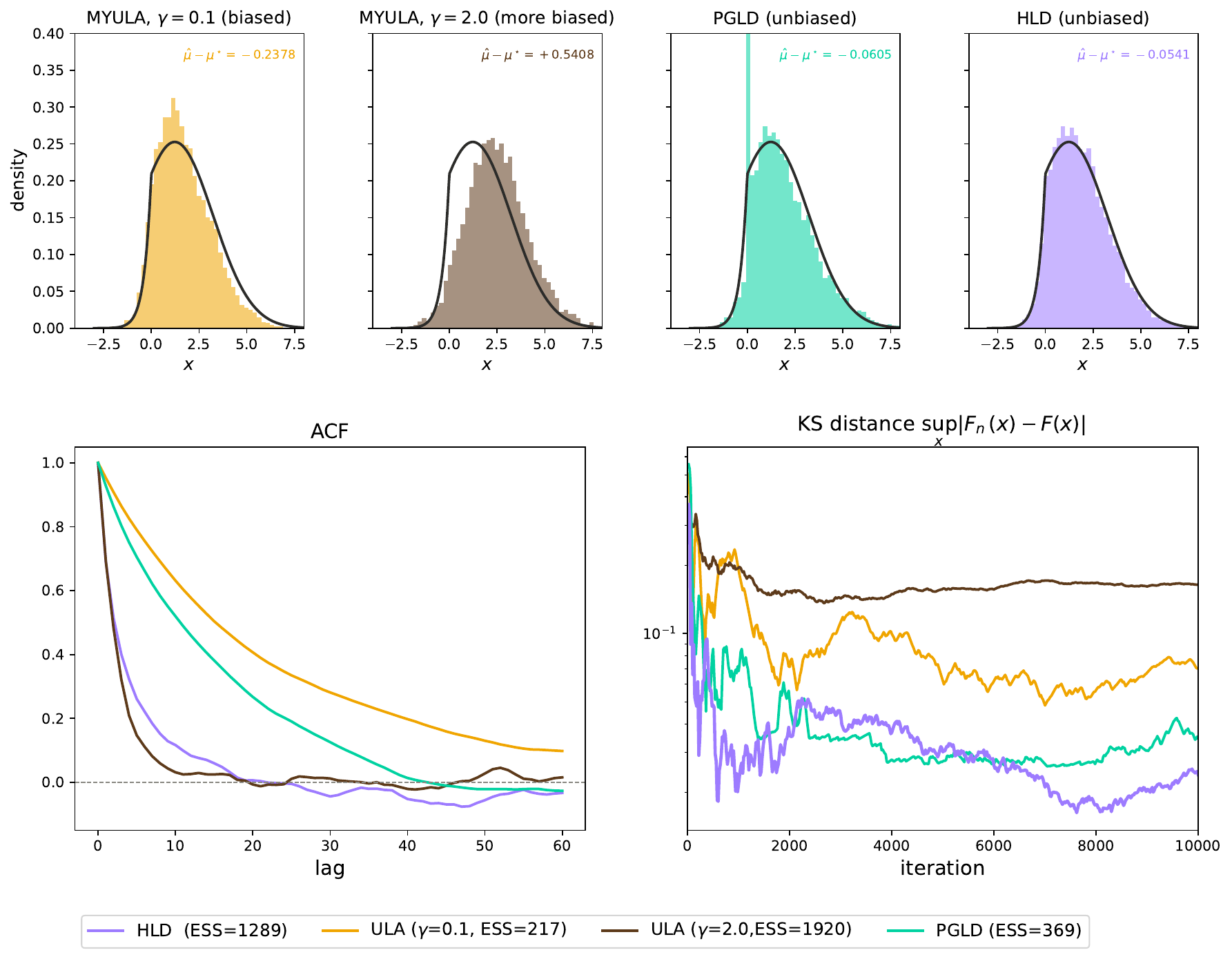}
    \caption{Top row: histograms of MYULA  \citep{pereyra2016proximal} with $\gamma=0.1$ and $\gamma = 2$,  proximal gradient Langevin dynamics (PGLD) \citep{durmus2019analysis} and Hadamard--Langevin dynamics (HLD) after $10^3$ iterations against the true density (in black). For MYULA, the stepsize is chosen to be $\gamma/(1+4L\gamma)$, while SPGLD and HLD use stepsize $0.1/L$ where $L$ is the Lipschitz constant of $G$. Bottom row: plot of the Autocorrelation Function (ACF) against lag and  the maximum vertical distance between the empirical and true cumulative distribution function. The regularization $\gamma$ for MYULA ensures a smoother invariant distributions and has faster mixing but is biased (as demonstrated by the higher effective sample size (ESS) score but larger KS distance). While PGLD has no smoothing bias, under finite stepsize, the soft thresholding operator means that  the stationary distribution of PGLD has a
point mass at zero mixed with a continuous density, hence the observed spike at 0 in the histogram. In contrast, Hadamard has no bias, and appears to have faster mixing compared to PGLD. A theoretical investigation of this phenomenon would be an interesting venue for future work. }
    \label{fig:mixing}
\end{figure}

\section{Conclusion and outlook}
We proposed a new approach for sampling distributions with the Laplace prior via a Hadamard parameterization. Our resulting Hadamard--Langevin Dynamics can be seen as an implicit form of Riemannian Langevin. The analysis in this work covered well-posedness of the continuous-time dynamics, strong convergence of the discretized dynamics, as well as geometric ergodicity results in both continuous and discrete time. These results provide a theoretical foundation for developing sampling schemes based on overparameterized Langevin dynamics. In the following, we detail the main remaining missing piece in our analysis of the Hadamard parametrization, and the challenges in extending this to other regularizers. 

\textbf{Theoretical limitations.}
On a theoretical level, for our numerical scheme, we established a strong convergence result (i.e., convergence as stepsize diminishes over a fixed time interval), and convergence of the  invariant distribution $\pi_\tstep$ for the discretized process. However, we are not able to establish quantitative rates of convergence. As mentioned in \cite[Section 3]{Lelievre2016-hx}, such convergence rates are typically obtained by understanding the Taylor expansion of the semigroup operator $P_{\deltat}\phi = \EE[\phi(x_{t_{k+1}})\mid x_{t_k} = \cdot]$. The difficulty of a theoretical proof is that these Taylor expansions would include singular $1/u$ terms and this lack of differentiability prevent the direct use of results such as \cite[Theorem 3.3]{Lelievre2016-hx}.  One important direction of future work is to quantify the bias of $\pi_{\deltat}$ and to provide quantitative strong convergence rates.  

\textit{One-dimensional experiment.}
In the absence of a  theoretical characterization, we provide  some numerical evidence on a simple one-dimensional problem in Figure \ref{fig:rate}  below that this bias is $\mathcal{O}(\deltat)$.

We consider the case where $d=1$, and choose $A=1$, $y=3$, and $\lambda = 2.7$.  We let $G(x) = \|x-y\|^2/2$.
We carry out two experiments with $\phi(x) = x^2$.
To estimate the impact of $\Delta t$ for the stationary distribution, for different stepsizes, we run our Hadamard--Langevin (HLD) algorithm for $T=800$ before recording $10^6$ samples. Figure \ref{fig:rate}(a) shows the error between the mean of  $\phi(u\odot v)$  evaluated via these samples compared with the integral of the true density $\rho$ (i.e., \eqref{target}) against $\phi$ (we compute this numerically  using scipy.integrate). This procedure is also repeated for MYULA, where the Moreau-envelope regularization choice  are taken to be $\gamma=1/(KL)$ for some $K\geq 1$ and $\deltat = \gamma/(5(\gamma L +1))$, where $L= \norm{A}^2$ is the Lipschitz constant of the gradient of $G$. This is consistent with the parameter choices recommended in \cite{durmus2018efficient}. We observe $\Oo(\deltat)$ errors for MYULA, and just slightly slower than $\Oo(\deltat)$ for HLD, although the error is typically lower for moderate stepsizes.

 To assess the convergence to the stationary distribution with fixed stepsize, fix $\Delta t=5\times 10^{-4}$, and generate $5\times 10^4$  paths of HLD and record the means $\phi(u\odot v)$ over these paths  for $10^4$ iterations. Figure \ref{fig:rate}(b) shows the error between the mean of $\phi(u\odot v)$ over the first $3000$ iterations against the  mean of  $\phi(u\odot v)$ over the last $1000$ iterations (which we expect to estimate the integral of $\phi$ against the  stationary distribution). The same procedure is repeated for Proximal Langevin. In both cases, we observe exponential decay of the error, which corroborates the exponential ergodicity result of \cref{NGE}.

\begin{figure}
    \centering
    \begin{tabular}{cc}
         (a)&(b)\\\includegraphics[width=0.35\linewidth]{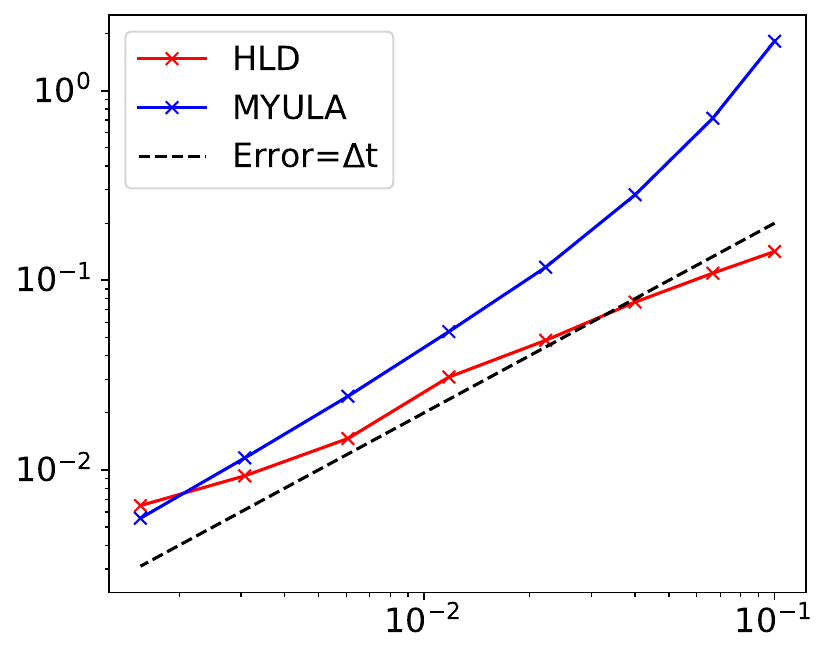}  &\includegraphics[width=0.35\linewidth]{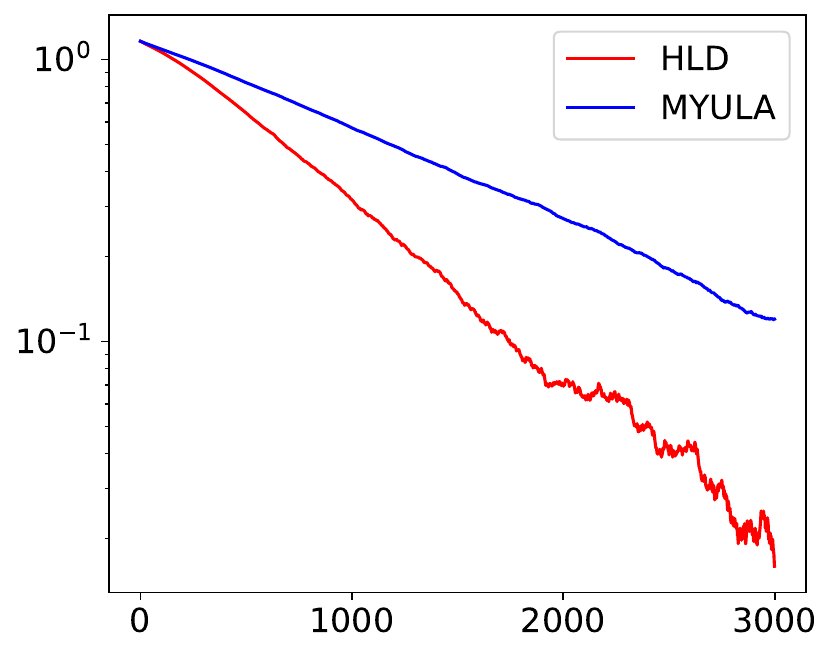}
         \\[-.5em]
         {\small \hspace{2em} $\deltat$}& 
         {\small \hspace{2em} Iteration}
    \end{tabular}
    \caption{Estimation of the convergence rate for a 1D problem.  (a) Error of the `stationary distribution' for different stepsizes. (b) Error decay with increasing iterations  for fixed stepsize.}
    \label{fig:rate}
\end{figure}

The code for reproducing this numerical experiment and also further higher-dimensional implementations of our method can be found online\footnote{ \url{https://hadamardlangevin.readthedocs.io/}} and \cite[Supplementary D]{supple_poon}.

\textbf{Other sparse priors.}
Although this  study is specific to the Laplace prior, one central idea is the development of Langevin dynamics based on Gaussian mixture representation of this prior. We first remark that the extension to the group-$\ell_1$ prior is trivial.
The group-$\ell_1$ prior is $\norm{x}_{1,2}\eqdef \sum_{j=1}^K \norm{x_{\mathbf b_j}}$, where $\cup_{j=1}^K \mathbf b_j$ is a partition of $\ens{1,\ldots, d}$. In this case, one can consider for $(u,v)\in\mathcal{X} = \real_+^K \times \RR^d$ 
$$
    \pi(u,v) \propto \prod_{i=1}^K u_i \exp\pa{-\beta\pa{\frac{\lambda}{2}\norm{(u,v)}^2 + G(u\odot v)  }}, 
$$
where $u\odot v \eqdef (u_i \,v_{\mathbf b_i})_{i=1}^K$. Samples of $\pi(x) \propto \exp\pa{-\beta \norm{x}_{1,2} - \beta G(x) }$ can be retrieved by sampling from $\pi$ and forming the product $u\odot v$ as before.  For this setting, $u\in\RR^K_+$ while $v\in\RR^d$ with $K\leq d$, and the only difference here is the different definition of $\odot$. Likewise,
the Langevin dynamics are identical to the $\ell_1$ setting, except $\odot$ and the different dimensions of $u$ and $v$.

A natural question is how to handle priors corresponding to  $\ell_q$ semi-norms with $q\in (0,1)$. These priors also have a Gaussian mixture representation \citep{west1987scale} $$
    e^{-|z|^q} \propto \int_0^\infty \frac{1}{\sqrt{2\pi \eta}}\, e^{-z^2/(2\eta)} \,\frac{1}{\eta^{3/2}} \,g_{q/2}\pa{\frac{1}{2\eta}}\, d\eta,$$
    where $g_{q/2}$ is the density of a positive stable random variable with index $q/2$. Unfortunately, this has no closed-form expression in general so it is unclear how to perform Langevin sampling for such priors. Nonetheless, given the developments of our present work, it is perhaps conceivable that one could  construct new sparsity-inducing priors and efficient Langevin sampling schemes via Gaussian-mixture representations, and we believe that this is an exciting direction for future work. In particular, recent works such as \citep{calvetti2024computationally} derived generalizations of the hierarchical samplers for the Bayesian lasso to  new sparsity inducing Gaussian mixture models. Our article  can be seen as a first attempt at deriving Langevin dynamics for such priors.

\bibliographystyle{plainnat}
\bibliography{references}

\appendix

\renewcommand{\theequation}{SM\arabic{equation}}
\section{Ergodicity under bounded assumptions}\label{app-sec-ergod}

Consider two measures $\pi_1,\pi_2$ such that $\pi_1$ is absolutely continuous with respect to $\pi_2$ and denote the Radon--Nikodym derivative by $d\pi_1/d\pi_2$. Define the entropy of $\pi_1$ with respect to $\pi_2$ (also called the Kullback--Leibler or KL divergence) by $\mathcal H(\pi_1\mid \pi_2)=\mathbb{E}_{\pi_1}(\log(d\pi_1/d\pi_2))$ and the Fisher information $\mathcal I(\pi_1\mid \pi_2)=\mathbb{E}_{\pi_1}\bp{\norm{\nabla \log(d\pi_1/d\pi_2)}^2}$. We say $\pi_2$ satisfies a log-Sobolev inequality LSI$(R)$ if, for a constant $R>0$,
\[\mathcal{H}(\pi_1\mid \pi_2)\le\frac{1}{2\,R} \mathcal{I}(\pi_1\mid \pi_2),\qquad \forall \pi_1\ll \pi_2. 
\]
The theory of log Sobolev inequalities (e.g., \cite{Lelievre2016-hx}) gives the following geometric convergence of $\rho(t,\cdot)$ to the invariant measure $\pi$. 
\begin{theorem} \label{thm_bndd_G} Assume that $G$ is bounded.
    The solution $(u,v)_t$ of \cref{main} with initial distribution $\rho_0$ converges to the invariant measure $\pi$ geometrically. In particular, 
    \begin{equation}\mathcal H(\rho(t,\cdot) \mid \pi)\le \exp(-2\,R\,t/\beta)\, \mathcal H(\rho_0 \mid \pi),\quad t>0,
    \label{hhh}\end{equation}
    for $R=\beta\, \lambda\,e^{\inf G-\sup G}$.
\end{theorem}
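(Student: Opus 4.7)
The plan is to combine a log-Sobolev inequality (LSI) for $\pi$ with the standard entropy-dissipation identity for the Fokker--Planck equation associated to \cref{main}. To avoid the singularity of $\log\pi$ at the boundary $\{u_i=0\}$, I would pass to the Cartesian lift \cref{cyl} rather than work directly on $\mathcal X$; this also makes Bakry--Emery applicable in the cleanest possible form on $\mathbb R^{3d}$.

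The Cartesian process \cref{cyl} has invariant measure $\pi_3 \propto \exp(-\beta[\tfrac12\lambda\|(y,z,v)\|^2 + G_3(y,z,v)])$ on $\mathbb R^{3d}$, which is the Cartesian analogue of $\pi$ on $\mathcal X$; the cylindrical correspondence \cref{agree} makes this precise, the $\prod u_i$ factor in \cref{pi} being exactly the Jacobian of $u_i=(y_i^2+z_i^2)^{1/2}$. The Gaussian part of $\pi_3$, namely $\exp(-\beta\lambda\|(y,z,v)\|^2/2)$, has Hessian $\beta\lambda I_{3d}$, so by the Bakry--Emery criterion it satisfies LSI$(\beta\lambda)$. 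Because $G_3$ inherits the two-sided bounds of $G$, Holley--Stroock perturbation of LSI under a bounded potential change yields that $\pi_3$ satisfies LSI$(R)$ with the constant $R$ stated in the theorem.

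Let $p_t$ denote the density of the solution of \cref{cyl}. Since the drift in \cref{cyl} is locally bounded and the diffusion is additive and nondegenerate, no boundary contributions arise and the standard de Bruijn identity
$$\frac{d}{dt}\mathcal H(p_t\mid \pi_3) = -\frac{1}{\beta}\,\mathcal I(p_t\mid \pi_3)$$
holds by integration by parts against the Fokker--Planck equation $\partial_t p_t = \frac{1}{\beta}\nabla\cdot(p_t\nabla\log(p_t/\pi_3))$. Combining with LSI$(R)$ gives $\frac{d}{dt}\mathcal H(p_t\mid\pi_3) \le -\tfrac{2R}{\beta}\mathcal H(p_t\mid\pi_3)$, and Gronwall yields exponential decay at the claimed rate. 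To transfer back, given $\rho_0$ on $\mathcal X$ take its canonical cylindrical lift $p_0$ on $\mathbb R^{3d}$ (uniform in the angular variables); by \cref{wp}, $p_t$ remains cylindrical and \cref{agree} yields $\mathcal H(\rho_t\mid\pi) = \mathcal H(p_t\mid\pi_3)$ since the polar Jacobians cancel in the ratio $p_t/\pi_3$. This gives \cref{hhh}.

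The main obstacle is the first step: one must confirm that the LSI established for $\pi_3$ on $\mathbb R^{3d}$ descends to an LSI on $\mathcal X$ for $\pi$ with the \emph{same} constant, which amounts to a coarea/projection argument showing that the Dirichlet form on $\mathcal X$ equals the Dirichlet form on $\mathbb R^{3d}$ evaluated on cylindrical functions. This is where the cylindrical structure of \cref{cyl} is essential. A direct argument on $\mathcal X$ is also possible, but would force one to handle Bakry--Emery and integration by parts near $\{u_i=0\}$ using the strict positivity from \cref{wp} and the inverse-moment bounds of \cref{cor}; the lifted argument sidesteps these boundary issues entirely.
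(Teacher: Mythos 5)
Your proposal is correct and follows essentially the same route as the paper: pass to the Cartesian lift \cref{cyl}, apply Bakry--Emery plus Holley--Stroock to the potential $\tfrac12\lambda\|\vec x\|^2+G_3(\vec x)$ as a bounded perturbation of a convex potential, and transfer the entropy decay back to $\mathcal X$ via a cylindrical initial distribution and \cref{agree}. You spell out the de Bruijn identity and the descent of the entropy through the cylindrical projection, which the paper leaves implicit, but the argument is the same.
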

The assumption that $G$ is bounded holds for  $G(x)=\sum \sigma(y_i a_i^\top x)$ when $\sigma$ is bounded such as the smooth approximations of the unit loss that are mentioned in the introduction.
\begin{proof}
 In terms of the density $p(t,\cdot)$ for \cref{cyl} and the Cartesian co-ordinates $\vec x_t\in \real^{3d}$, standard theory (Bakry--Emery criterion, Holley--Stroock theorem) applies to the potential $\frac 12 \lambda \|\vec x \|^2+G_3(\vec x)$, as it is a bounded perturbation (given by $G$) of a convex potential (given by $\frac12\lambda\|\vec x\|^2$).
 See~\cite[Proposition 2.9]{Lelievre2016-hx}. This gives $\mathcal H(p(t,\cdot) \mid \pi)\le \exp(-2\,R\,t/\beta)$ for $R$ defined above. Taking a cylindrical initial distribution, $p(t,\cdot)=\rho(t,\cdot)$, we have \cref{hhh}.
\end{proof}

\section{Link to Riemannian Langevin}\label{app-sec-riemann-geom}
\begin{proof}[Proof of Proposition \ref{prop-riemann-geom}]
By It\^o's formula,
$$
d\mathbf{z}_t  = \underbrace{ \begin{bmatrix}
 -2 \lambda \eta_t - \frac12 x_t \odot \nabla G(x_t) + 1 \\[1.05em]
- \pp{ 4\eta_t +\frac{ x_t^2}{4\eta_t }} \odot \nabla G(x_t) -  2  \lambda x_t +  \frac{x_t}{4 \eta_t} 
\end{bmatrix}}_{\mu(\mathbf{z}_t)} dt +  \sqrt{2}   \underbrace{ \begin{bmatrix}
\frac12 \diag(u_t)&0 \\
\diag(v_t)	& \diag(u_t)
\end{bmatrix}  }_{S(\mathbf{z}_t)} \binom{dW^1_t}{dW^2_t} .
$$
A direct computation gives
$$
-M(\mathbf{z})\nabla H(\mathbf{z}) = \begin{bmatrix}
    -2\lambda \eta - \frac12 x \odot \nabla G(x) - \frac12\\
    - \pa{4\eta + \frac{x^2}{4\eta} }\odot \nabla G(x) -2\lambda x - \frac{x}{4\eta}
\end{bmatrix}\quad \text{and}\quad  \Gamma(\mathbf{z}) =\frac12 \begin{bmatrix}
    3 \cdot 1_d\\ x/\eta
\end{bmatrix},
$$
where $\mathfrak{g}(\mathbf{z}) = M(\mathbf{z})^{-1}$ is the inverse metric and $\Gamma$ is the correction drift defined in \eqref{eq:riemannian}. Therefore $\mu(\mathbf{z}) = -\mathfrak{g}(\mathbf{z})^{-1}\nabla H(\mathbf{z}) + \Gamma(\mathbf{z})$, and $M(\mathbf{z}) = S(\mathbf{z}) S(\mathbf{z})^\top$.

Letting $\rho(t,\cdot)$ denote the distribution of $\mathbf{z}_t$, the Fokker--Planck equation reads
$$
\frac{\partial}{\partial t} \rho = - \mathrm{div}\pa{ \mu^\top(\mathbf{z}) \rho }  + \sum_{i=1}^{2d} \sum_{j=1}^{2d} \frac{\partial^2}{\partial \mathbf{z}_i\, \partial \mathbf{z}_j} \pa{ M_{i,j}(\mathbf{z}) \rho  }.
$$
Expanding the second term gives
\begin{align*}
\sum_{i=1}^{2d} \sum_{j=1}^{2d} \frac{\partial^2}{\partial \mathbf{z}_i\, \partial \mathbf{z}_j} \pa{ M_{i,j}(\mathbf{z}) \rho  } 
&= \mathrm{div}\pa{ M \nabla \rho } + \mathrm{div}\pa{ \Bigl[ \sum_{j} \frac{\partial}{\partial \mathbf{z}_j} M_{i,j}(\mathbf{z})\, \rho\Bigr]_{i=1,\ldots,2d} }\\
&= \mathrm{div}\pa{ M \nabla \rho } + \mathrm{div}\pa{ \binom{ 3/2 }{ \tfrac{ x}{2\eta}  } \rho   }.
\end{align*}
Combining $-\mu^\top$ with this correction, we obtain
\begin{align*}
\frac{\partial}{\partial t} \rho &= \mathrm{div}\pa{   \begin{bmatrix}
 2\lambda\eta + \tfrac12 x\odot \nabla G(x) + \tfrac12 \\[0.3em]
 \pa{ 4\eta + \tfrac{ x^2}{4\eta} }\odot \nabla G(x) +  2\lambda x +  \tfrac{x}{4\eta}  
\end{bmatrix} \rho  } + \mathrm{div}\pa{ M \nabla \rho } \\
&=\mathrm{div} \pa{ \begin{pmatrix}
\eta &\tfrac12 x   \\
\tfrac 12 x &\tfrac{ x^2}{4\eta} +4 \eta 
\end{pmatrix}
\pa{ \binom
{  2\lambda - \lambda\tfrac{ x^2}{8\eta^2} + \tfrac{1}{2\eta} }
{ \nabla G(x) + \lambda  \tfrac{ x}{4\eta}  }
+ \nabla \log \rho } \rho} \\
&=  \mathrm{div}\pa{M(\mathbf{z})\, \nabla\pa{H(\mathbf{z})+ \log(\rho)}\, \rho }.\qedhere
\end{align*}
\end{proof}

\section{Additional proofs for the discretized scheme}\label{sec-add-proofs-discr-scheme}

We first remark that our numerical scheme \eqref{discrete_scheme} can be written explicitly as
\begin{gather}\begin{split}
    \binom{u^{k+\frac12}}{v^{k+\frac12}} &= \binom{u^k}{v^k} - \deltat \,\binom{v^k \odot \nabla G(u^k\odot v^k)}{u^k \odot \nabla G(u^k\odot v^k)}  + \sqrt{\frac{2}{\beta}}
    \binom{\deltaw^1_k}{\deltaw^2_k},\\
    u^{k+1} &=  \frac{ u^{k+\frac12} + \sqrt{ u^{k+\frac12}\odot u^{k+\frac 12} + 4(\tstep/\beta) (1+\tstep\lambda) }}{2(1+\tstep\lambda)},\\
    v^{k+1} &= \frac{1}{1+\tstep\lambda} v^{k+\frac12}.\end{split}\label{themethod}
\end{gather}
 where the square root is interpreted element-wise and we have used the fact that, for $u>0$,
 \begin{align*}
 u = \tstep \,\frac{1}{\beta\,u} - \tstep\, \lambda\, u + w &\iff (1+\tstep \lambda) u^2  - w\, u -\tstep/\beta  = 0\\ &\iff u =\frac{ w + \sqrt{ w^2 + 4(\tstep/\beta) (1+\tstep \lambda) }}{2(1+\tstep \lambda)}.
 \end{align*}
 It is straightforward to see from this that positivity of $u^k$ is preserved.


\subsection{Explicit form of the transition density}
\begin{lemma}\label{lem:transition_densities}
Up to constants, the log transition kernel $P$ for  moving from $x = (u,v)^k$  to $\hat x= (u,v)^{k+1}$ under \cref{discrete_scheme} is
\begin{align*}
    \log(P)(x, \hat x) &= -\frac{\beta}{4\tstep} \norm{   (1+\tstep\,\lambda) u^{k+1} - \frac{\tstep}{\beta u^{k+1}} - u^k  + \tstep\, v^k\odot \nabla G(u^k\odot v^k) ) }^2\\
    &\quad -\frac{\beta}{4\tstep} \norm{   (1+\tstep\,\lambda) v^{k}  - v^{k+1}  + \tstep\, u^k\odot \nabla G(u^k\odot v^k)) ) }^2\\
    &\quad + \sum_{i=1}^d\log\pa{ 1+\tstep\,\lambda + \frac{\tstep}{\beta  (u^k_i)^2}}.
\end{align*}
\end{lemma}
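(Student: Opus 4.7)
The strategy is to read off the transition density by inverting the discrete scheme~\eqref{discrete_scheme} for the Gaussian increments $\deltaw^1_k,\deltaw^2_k$ and then applying the change-of-variables formula, since conditional on $(u^k,v^k)$ the only randomness entering $(u^{k+1},v^{k+1})$ is through these increments, which are independent Gaussians $\Nn(0,\tstep\,\Id_d)$.

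First I would rearrange~\eqref{discrete_scheme} to isolate the noise, defining
\[
F_1(\hat u;u,v)\eqdef (1+\tstep\,\lambda)\,\hat u-\tfrac{\tstep}{\beta\,\hat u}-u+\tstep\,v\odot\nabla G(u\odot v),
\qquad
F_2(\hat v;u,v)\eqdef (1+\tstep\,\lambda)\,\hat v-v+\tstep\,u\odot\nabla G(u\odot v),
\]
so that $\sqrt{2/\beta}\,\deltaw^1_k=F_1(u^{k+1};u^k,v^k)$ and $\sqrt{2/\beta}\,\deltaw^2_k=F_2(v^{k+1};u^k,v^k)$. The map $\hat v\mapsto F_2(\hat v;u^k,v^k)$ is affine on $\real^d$ with Jacobian $(1+\tstep\,\lambda)\,\Id_d$, hence is a bijection. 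The map $\hat u\mapsto F_1(\hat u;u^k,v^k)$ acts componentwise, and each component $\hat u_i\mapsto(1+\tstep\,\lambda)\hat u_i-\tstep/(\beta\,\hat u_i)+\text{const}$ is strictly increasing from $-\infty$ to $+\infty$ on $\real_+$ (its derivative is $(1+\tstep\,\lambda)+\tstep/(\beta\,\hat u_i^2)>0$), so it is a bijection from $\real_+^d$ onto $\real^d$.

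Next I would compute the Jacobian determinant. Both Jacobians are diagonal, giving
\[
\det\tfrac{\partial F_1}{\partial \hat u}=\prod_{i=1}^d\left(1+\tstep\,\lambda+\tfrac{\tstep}{\beta\,\hat u_i^{2}}\right),
\qquad
\det\tfrac{\partial F_2}{\partial \hat v}=(1+\tstep\,\lambda)^d.
\]
Since $(\deltaw^1_k,\deltaw^2_k)$ has density proportional to $\exp\bigl(-\|\deltaw^1_k\|^2/(2\tstep)-\|\deltaw^2_k\|^2/(2\tstep)\bigr)$, substituting $\deltaw^i_k=\sqrt{\beta/2}\,F_i$ turns the Gaussian exponent into $-(\beta/(4\tstep))(\|F_1\|^2+\|F_2\|^2)$, and the Jacobian contributes the multiplicative factor $(1+\tstep\,\lambda)^d\prod_i(1+\tstep\,\lambda+\tstep/(\beta\,\hat u_i^2))$ (together with the $(\beta/2)^d$ factor from the scaling, which is constant). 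Taking logarithms and absorbing all factors independent of $\hat x$ into the additive constant yields exactly the stated expression for $\log P(x,\hat x)$.

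The only step that requires any care is the invertibility claim for $F_1$, which is why I'd spell out monotonicity explicitly; everything else is a mechanical application of the change-of-variables formula. (The $(u^k_i)^2$ appearing in the displayed Jacobian term in the lemma statement should in fact be $(u^{k+1}_i)^2$, reflecting that the Jacobian is taken with respect to the new variable; this is a minor typographical matter that becomes clear from the derivation.)
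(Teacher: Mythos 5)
Your proposal is correct and follows essentially the same route as the paper: the paper factors the update as an explicit Gaussian step followed by the deterministic implicit resolvent and differentiates the resulting CDF, which is exactly your change-of-variables computation with the componentwise Jacobian $(1+\tstep\lambda)+\tstep/(\beta\,\hat u_i^2)$. Your observation that the Jacobian term in the lemma statement should involve $(u^{k+1}_i)^2$ rather than $(u^k_i)^2$ is also borne out by the paper's own derivation, where the factor is evaluated at the target variable $t=u^{k+1}_i$.
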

\begin{proof}
Let
$x = (u,v)$  and $g(x) = (v\odot \nabla G(u\odot v), u\odot \nabla G(u\odot v))$. From \cref{discrete_scheme}, we have $P(x) = P_1(P_0(x))$ for
$$
P_0(x) = x - \tstep \,g(x) + \sqrt{\frac{2\tstep}{\beta} }\Nn(0,\Id_{2d})
$$
and $P_1(x)  = (P_{1,u}(x), P_{1,v}(x))$ with 
$$
P_{1,u}(x) = \frac{u + \sqrt{u\odot u + \frac{4}{\beta} \tstep (1+\tstep \lambda))}}{2(1+\tstep\lambda)}, \qquad P_{1,v}(x) = \frac{v}{(1+\tstep\lambda)}.
$$
Note that $ P_0(x) \sim\Nn\pa{x - \tstep \,g(x) , (2\,\tstep/\beta)\,\Id_{2d}  }$  is Gaussian with transition density
$$
f_0(z) \propto \exp\pa{ -\beta\frac{(z-(x-\tstep\, g(x)))^2}{4\,\tstep} }
$$
To work out the density of $u = P_{1,u}(z)$ with $z=P_0(x)$, write the  cumulative distribution function $F_{1,u}(t)$ as
\begin{align*}
    F_{1,u}(t) &= \PP(u\leq t) = \PP\pa{\frac{z + \sqrt{z\odot z + \frac{4}{\beta}\tstep\, (1+\tstep \,\lambda))}}{2(1+\tstep\,\lambda)} \leq t }\\
    &= \PP\pa{   z   \leq (1+\tstep\,\lambda) t - \frac{\tstep }{t\,\beta }}.
\end{align*}
It follows that the density  of $Y_u$ is
$$
f_{1,u}(t) = \frac{d}{dt}F_{1,u}(t) \propto f_0\pa{ (1+\tstep\,\lambda)\, t - \frac{\tstep}{\beta \,t} }  \pa{(1+\tstep\,\lambda) +  \frac{\tstep}{\beta \,t^2} }.
$$
The density of $P_{1,v}(v)$ is
$f_{1,v}(t)\propto f_0( (1+\tstep\,\lambda)\,t)$.
\end{proof}

\subsection{Proof of drift condition of order 4}\label{supp_drift}
\begin{lemma}\label{lem:drift_4th}
    Let $V_4((u,v)) = 1+ \norm{(u,v)}^4$. There exists $C,c,\nu>0$ such that for all $\deltat<c\lambda$,
    $$
    \EE[V_4((u,v)^{k+1})\mid (u,v)^k] \leq (1-\nu\,\tstep)\, V_4((u,v)^k) + C\, \deltat.
    $$    
\end{lemma}
\begin{proof}[Proof of Lemma \ref{lem:drift_4th}]
    Since
\begin{align}
    (1+\lambda\,\deltat )u^{k+1} - \frac{\deltat}{\beta\, u^{k+1}} = u^k - v^k \odot \nabla G(u^k \odot v^k) \deltat + \sqrt{\frac{2}{\beta}}\Delta W^1_k,
\end{align}
taking the square norm yields
\begin{align}
    (1+\lambda\deltat )^2\snorm{u^{k+1}}^2 - \frac{2\,d\,\deltat\,(1+\lambda\,\deltat)}{\beta }& \leq  \norm{u^k - v^k \odot \nabla G(u^k \odot v^k) \deltat}^2 + \frac{2}{\beta}\,\snorm{\Delta W^1_k}^2 \\
    &\qquad+ \sqrt{\frac{4}{\beta}}\dotp{\Delta W^1_k}{u^k - v^k \odot \nabla G(u^k \odot v^k) \deltat}.
\end{align}
Squaring this again and taking expectation conditional on $u^k,v^k$ gives
\begin{align*}
    &(1+\lambda\deltat )^4\EE[\snorm{u^{k+1}}^4\mid (u,v)^k] - \frac{4d\deltat(1+\lambda\deltat)^3}{\beta } \EE[\snorm{u^{k+1}}^2\mid(u,v)^k]\\
    & \leq  \norm{u^k - v^k \odot \nabla G(u^k \odot v^k) \deltat}^4 + \frac{4}{\beta^2}\EE\snorm{\Delta W^1_k}^4
    +\frac{4}{\beta} \snorm{u^k - v^k \odot \nabla G(u^k \odot v^k) \deltat}^2  \EE\snorm{\Delta W^1_k}^2 
    \\
    &\qquad+ \frac{4}{\beta}\EE\snorm{\Delta W^1_k}^2\norm{u^k - v^k \odot \nabla G(u^k \odot v^k) \deltat}^2.
\end{align*}
We now use the fact that $\EE_k \norm{\Delta W_k^1}^2 = d \,\tstep$ and $\EE_k \norm{\Delta W_k^1}^4 = (d^2+2d)\,\tstep^2$ to obtain
\begin{align*}
    &(1+\lambda\deltat )^4\EE[\snorm{u^{k+1}}^4\mid (u,v)^k] - \frac{4d\deltat(1+\lambda\deltat)^3}{\beta } \EE[\snorm{u^{k+1}}^2\mid (u,v)^k]\\
    & \leq  \norm{u^k - v^k \odot \nabla G(u^k \odot v^k) \deltat}^4 + \frac{4 (d^2+2d)\deltat^2}{\beta^2}
    +\frac{8d\deltat}{\beta} \norm{u^k - v^k \odot \nabla G(u^k \odot v^k) \deltat}^2 .
\end{align*}
Applying Young's inequality for products gives that, for any $\alpha_1,\alpha_2>0$,
\begin{align*}
    &\pa{(1+\lambda\deltat )^4-\alpha_1 \deltat}\EE[\snorm{u^{k+1}}^4\mid(u,v)^k] \\
    &\leq (1+\alpha_2\deltat) \norm{u^k - v^k \odot \nabla G(u^k \odot v^k) \deltat}^4 + \frac{4 (d^2+2d)\deltat^2}{\beta^2}
    + \frac{64 d^2\deltat}{\beta^2 \alpha_2} + \frac{4 d^2\deltat(1+\lambda\deltat)^6}{\alpha_1\beta^2 }.
\end{align*}

As in the proof of \cref{lem:2ndmoments_discrete}, we have
\begin{align*}
    \norm{u^k - v^k \odot \nabla G(u^k \odot v^k) \deltat}^2 \leq \snorm{u^k}^2 + \deltat^2 B^2 \snorm{v^k}^2 + 2K \deltat
\end{align*}
and squaring this gives
\begin{align*}
    &\norm{u^k - v^k \odot \nabla G(u^k \odot v^k) \deltat}^4 \\
    &\leq  \snorm{u^k}^4 + \deltat^4 B^4\snorm{v^k}^4 +2\snorm{u^k}^2  \deltat^2 B^2 \snorm{v^k}^2+  4K \deltat \snorm{u^k}^2 +4K \deltat^3 B^2 \snorm{v^k}^2 + 4K^2 \deltat^2\\
    &\leq
    (1+\deltat^2 B^2)\snorm{u^k}^4 + (B^2\deltat^2 + \deltat^4 B^4)\snorm{v^k}^4 
    \\
    &\qquad+\alpha_3\deltat \snorm{u^k}^4 + \frac{4K^2\deltat}{\alpha_3 }+\alpha_3\deltat^2\snorm{v^k}^4 +\frac{4K^2\deltat^4B^4}{\alpha_3} +4K^2\deltat^2\\
    &=
    (1+\alpha_3\deltat +\deltat^2 B^2)\snorm{u^k}^4 + \deltat^2(\alpha_3 + B^2 + \deltat^2 B^4)\snorm{v^k}^4 +4K^2\deltat(\alpha_3^{-1} + \deltat^3B^4\alpha_3^{-1}+\deltat).
\end{align*}
It follows that
\begin{align}
   \pa{(1+\lambda\deltat )^4-\alpha_1 \deltat} \EE\bp{\snorm{u^{k+1}}^4\mid (u,v)^k} \leq 
    c_1\snorm{u^k}^4 + c_2\deltat^2 \snorm{v^k}^4 +c_3\deltat,
\end{align}
where
$$
c_1 = (1+\alpha_2 \deltat)(1+\alpha_3\deltat +\deltat^2 B^2),\qquad
c_2 = (1+\alpha_2 \deltat)(\alpha_3 + B^2 + \deltat^2 B^4),
$$
and
$$
c_3 =
(1+\alpha_2 \deltat) 4K^2(\alpha_3^{-1} + \deltat^3B^4\alpha_3^{-1}+\deltat) 
+\frac{64 d^2}{\beta^2 \alpha_2} + \frac{4 d^2(1+\lambda\deltat)^6}{\alpha_1\beta^2 }.
$$
One can carry out a similar computation for $v^{k+1}$ to obtain
$$
(1+\lambda\deltat)^4 \mathbb{E}[\snorm{v^{k+1}}^4 \mid (u,v)^k]\leq  c_1 \snorm{v^k}^4 +  c_2 \deltat^2 \snorm{u^k}^4 + \tilde c_3\deltat,
$$
with
$$
\tilde c_3 =
(1+\alpha_2 \deltat) 4K^2(\alpha_3^{-1} + \deltat^3B^4\alpha_3^{-1}+\deltat) 
+\frac{64 d^2}{\beta^2 \alpha_2}.
$$
It follows that 
\begin{align}
   ((1+\lambda\deltat)^4-\alpha_1\deltat)\, \EE[\snorm{(u,v)^{k+1}}^4 \mid (u,v)^k]\leq (c_1 +c_2\deltat^2) \snorm{(u,v)^{k}}^4 + 2c_3\deltat.
\end{align}
Finally, define
\begin{align}
   \alpha\eqdef  \frac{c_1+c_2\deltat^2}{(1+\lambda\deltat)^4-\alpha_1\deltat}.
\end{align}
Then, $\alpha<1$ provided that
$$
\alpha_1 \deltat + (1+\alpha_2 \deltat)(1+\alpha_3\deltat +\deltat^2 B^2)
+\deltat^2 (1+\alpha_2 \deltat)(\alpha_3 + B^2 + \deltat^2 B^4) < (1+\lambda \deltat)^4 .
$$
The leading exponent to $\deltat^0$ on both sides is 1, and the leading exponent to $\deltat$ is $4\lambda$ and $\alpha_1 + \alpha_2+\alpha_3$ on the right and left and side respectively. By choosing
$\alpha_1,\alpha_2,\alpha_3\lesssim \lambda $ and $\deltat\lesssim \lambda$, we have $\alpha<1$. In other words, we may choose $\nu>0$ and write $\alpha \le 1-\nu \tstep$ for $\tstep$ sufficiently small.
\end{proof}

\section{Numerical experiments}\label{app-numerics}

The code for reproducing our numerical experiments can be found online\footnote{ \url{https://hadamardlangevin.readthedocs.io/}}.
Throughout, we consider the setting where $G(x) = \|Ax-y\|^2/2$. The stepsize and Moreau-envelope regularization choice for Prox-l1 are taken to be $\gamma=1/(KL)$ for some $K\geq 1$ and $\deltat = \gamma/(5(\gamma L +1))$, where $L= \norm{A}^2$ is the Lipschitz constant of the gradient of $G$. This is consistent with the parameter choices recommended in \cite{durmus2018efficient}.

\subsection{Synthetic  experiment}
We consider the case of $d=20$, and $A\in\RR^{40\times 20}$ is a random Gaussian matrix of mean zero and variance $\frac{1}{16m}$ for $m=40$. We let $\lambda = \frac12 \norm{A^\top y}_\infty$ and $y=A x_0$, where $x_0$ has exactly 2 non-zero entries.  For the Prox-l1 and Hadamard--Langevin samplers, we run $10^4$ burn-in iterations before recording $10^5$  samples,  while for the Gibbs sampler (due to its slower running time and faster convergence), we record $10^4$ samples after $10$ burn-in samples. To assess the convergence of each scheme, we evaluate the effective sample size (ESS), which is computed using the package arviz \citep{arviz_2019}.   The sample means for each of the 20 dimensions are shown in   \cref{fig:means-dim20}, which also shows the ESS in each dimension. The Gibbs sampler has the highest ESS, followed by Hadamard--Langevin. In general, for these low-dimensional settings, the  Gibbs sampler displays fast mean convergence behavior, and the Hadamard--Langevin sampler also converges well (although less sharp than the Gibbs sampler).

\begin{figure}[!htp]
    \centering
    \begin{tabular}{c@{}c@{}c@{}c}
    Prox-l1&Hadamard&Gibbs& ESS\\
\includegraphics[width=0.24\linewidth]{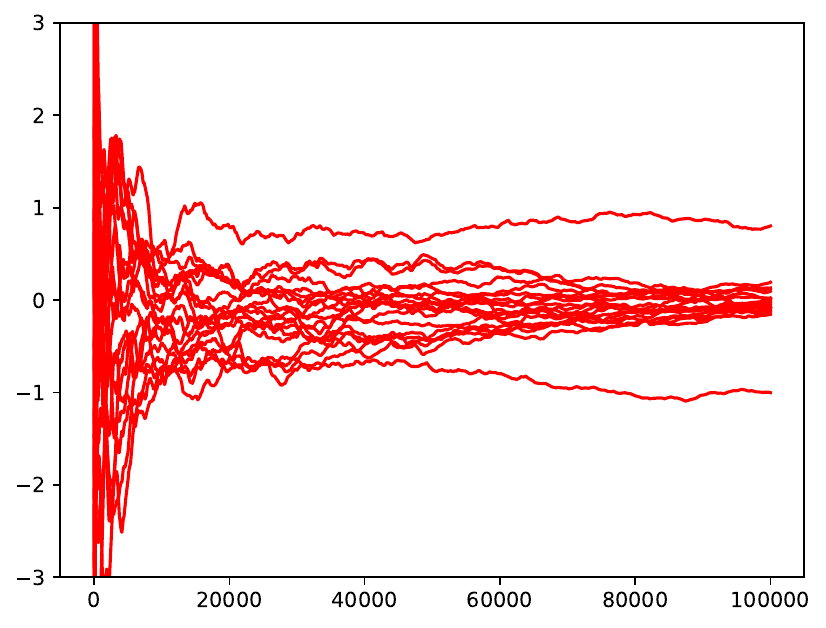}
&
\includegraphics[width=0.24\linewidth]{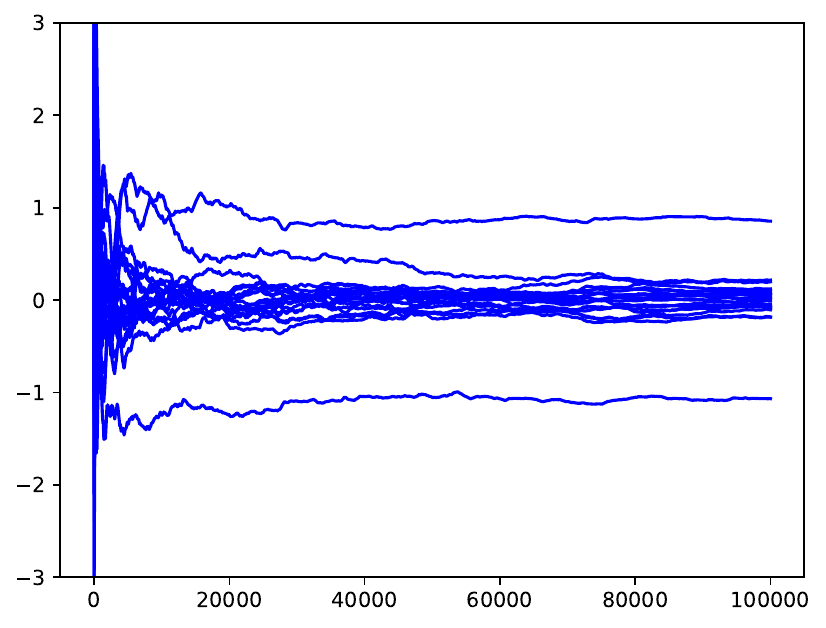}
&
\includegraphics[width=0.24\linewidth]{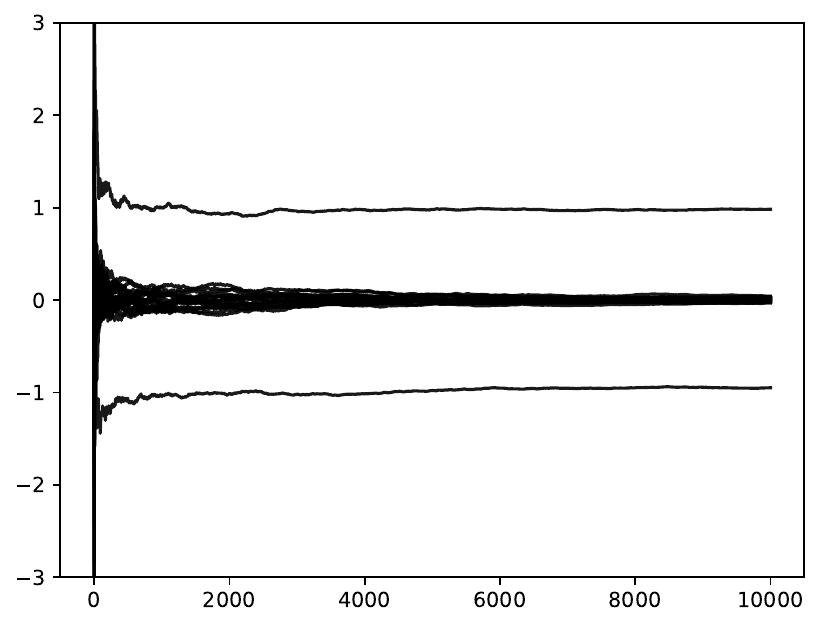}
&\includegraphics[width=0.24\linewidth]{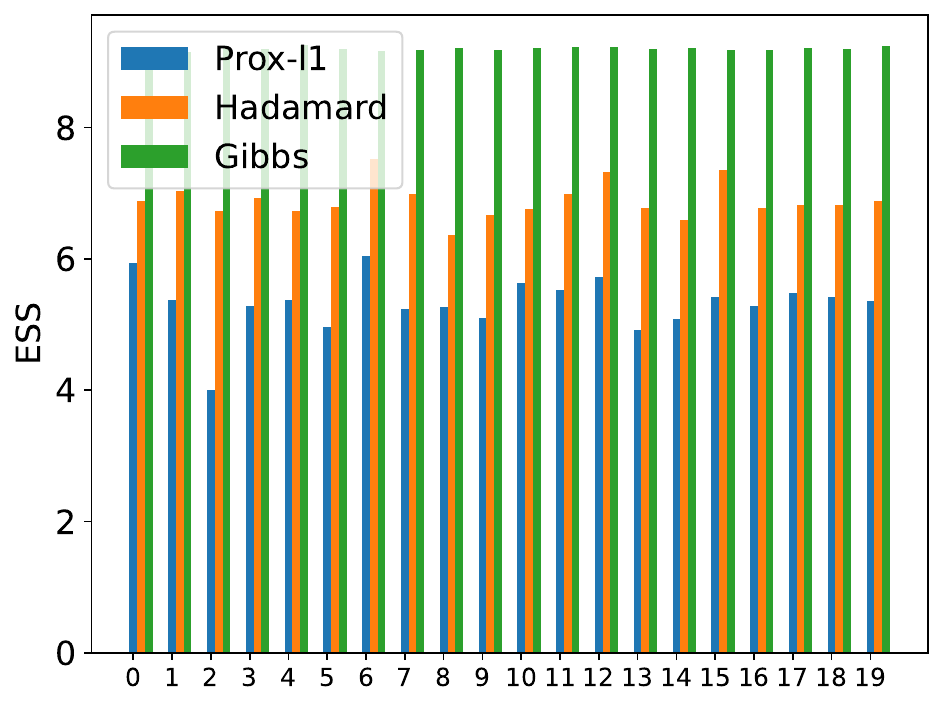}
    \end{tabular}
    \caption{Plots of sample means against iteration. The right figure shows the ESS in log-scale for each of the 20 dimensions. The minimum ESS (in log scale) for Prox-l1 is 4.0, for Hadamard is 6.4 and for Gibbs is 9.2.}
    \label{fig:means-dim20}
\end{figure}

\subsection{Deconvolution with Haar wavelets}
We consider Gaussian deconvolution of a  piecewise-constant signal (length $d=1024$). The matrix $A = U W^\top$ where $U$ is a convolution operator and $W$ is an orthogonal Haar transform. The regularization parameter is $\lambda =1$. We run Proximal-Langevin and Hadamard--Langevin samplers at stepsize $\tau=0.01$,  with $10^2$ burn-in iterations and record $10^5$ samples. The sample means   and ESS scores are shown in Figure \ref{fig:haar}. The ground truth is displayed as x0. In log scale, we also plot the difference between the  95th and 5th quantiles. Notice that both methods have a higher difference around the jumps, which naturally aligns with the fact that reconstruction of the precise jump locations is more unstable.  

\subsection{Image inpainting}
We perform  inpainting: let $A = UW^\top$, where $U$ is a masking operator and $W$ is the wavelet transform with the Haar wavelet, with $\beta=5,100,500$. For both Prox-l1 and Hadamard--Langevin samplers, we run 10,000 burn-in iterations, then record every 20 samples out of 6000 samples. On the $300$ recorded samples, we compute the mean images and quantile differences.   The corrupted input as well as the mode image (i.e., the MAP estimate)  are shown in \cref{fig:inpainting1} in the case of $\beta=5$. The mean images and difference in 95th and 5th quantiles are shown in \cref{fig:inpainting2}. For small $\beta$, the largest variations in the quantiles for both methods occur where the obscured pixels are. By increasing the inverse temperature $\beta$,  the distribution concentrates more around the mode. In this case, we observe that the largest variation occurs around the image edges, which suggests that the uncertainty as $\beta$ increases is more around the location of the image edges.
The observations of this latter experiment are in line with the observations of \cite{durmus2018efficient}.

\begin{figure}
    \centering
\begin{tabular}{c@{}c@{}c}
(a)&(b)&(c)\\
     \includegraphics[width=0.32\linewidth]{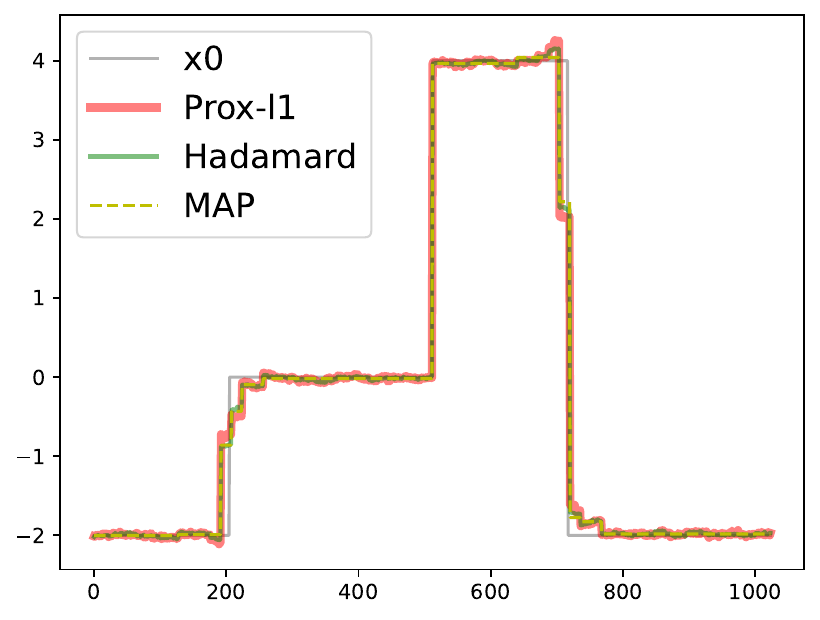}
&\includegraphics[width=0.32\linewidth]{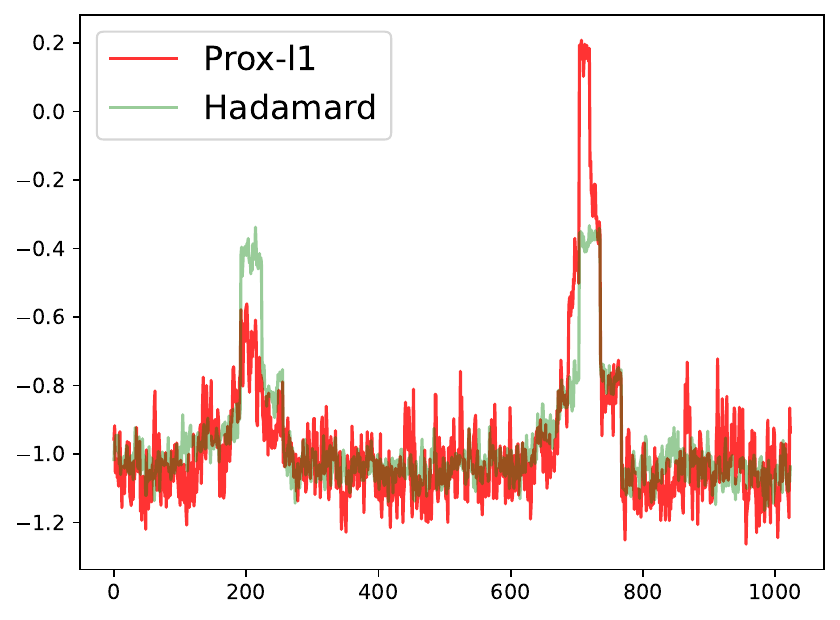}
&
\includegraphics[width=0.32\linewidth]{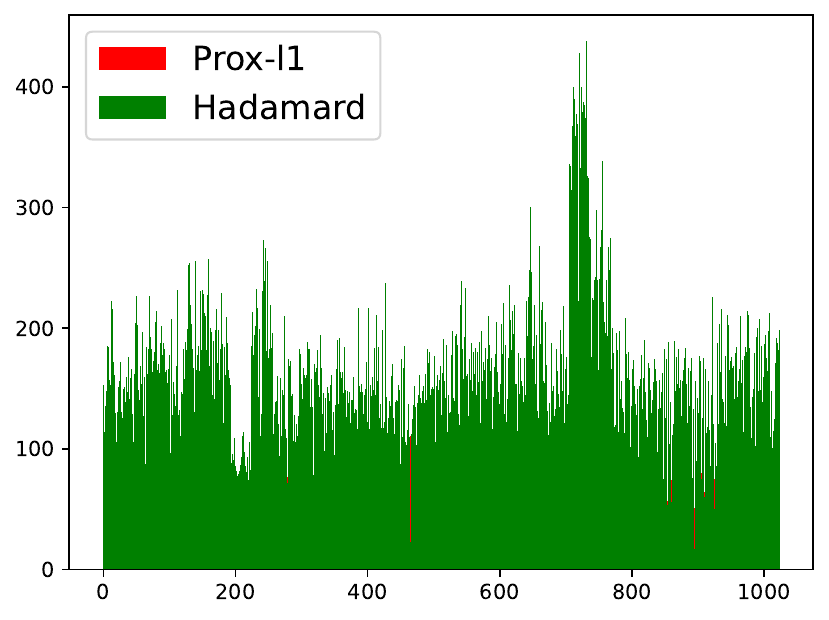}
\end{tabular}
    \caption{1d Haar wavelet deconvolution: (a) the mode and mean signals, (b)  the difference between the 95th  and 5th quantiles, and (c)  the  ESS at each pixel. The  ESS across all pixels is higher for Hadamard--Langevin dynamics than Prox-l1. }
    \label{fig:haar}
\end{figure}

\begin{figure}
\centering
\begin{tabular}{c@{}c@{}}
Observation&Mode\\
\includegraphics[width=0.3\linewidth]{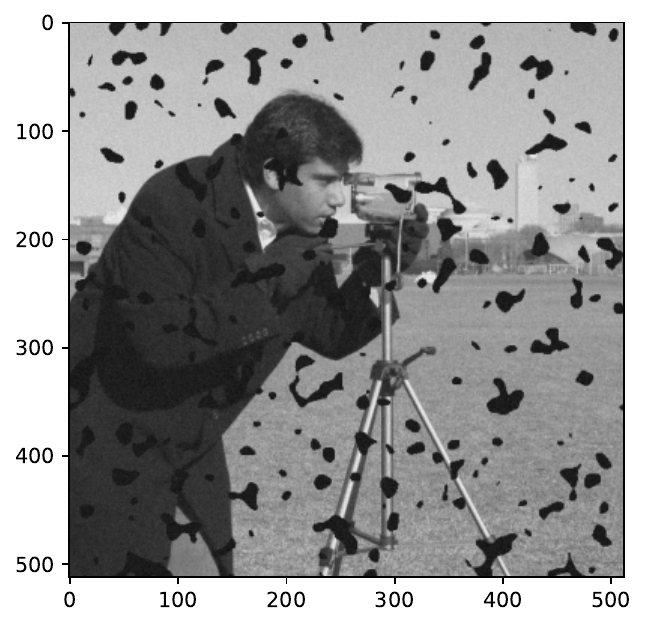}
&
\includegraphics[width=0.3\linewidth]{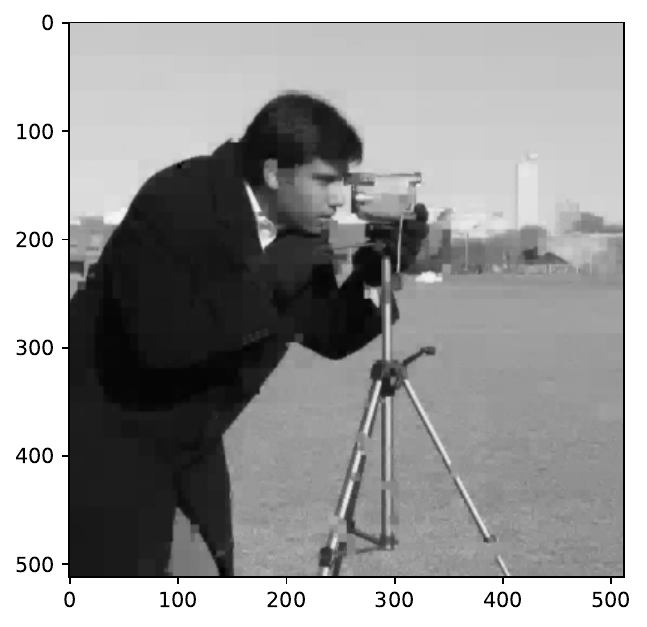}
     \end{tabular}
         \caption{Observation and mode image.}
\label{fig:inpainting1}
\end{figure}

\begin{figure}[!htp]
    \centering

\begin{tabular}{c@{}c@{}c@{}c@{}c}
&Hadamard&Hadamard&Prox-l1&Prox-l1\\    
&(mean)&(percentile)&(mean)&(percentile)\\
\rotatebox{90}{\hspace{2em}$\beta=5$}&
\includegraphics[width=0.21\linewidth]{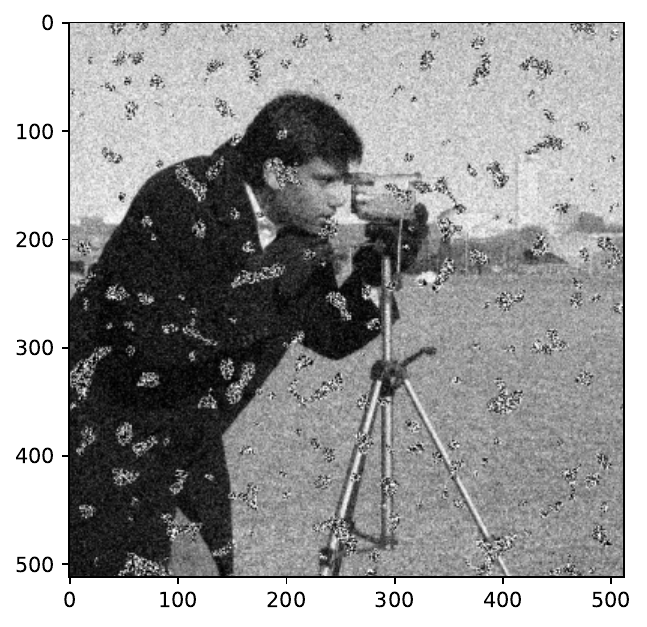}
&\includegraphics[width=0.25\linewidth]{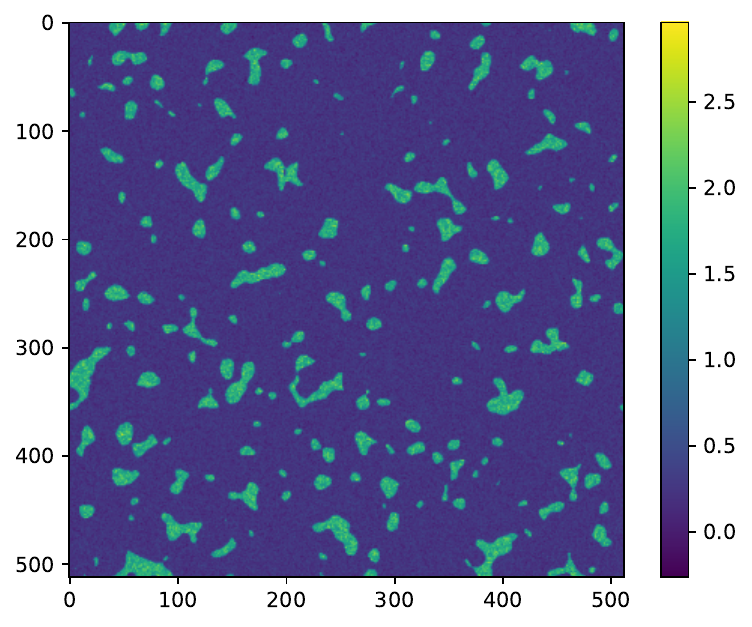}
&
\includegraphics[width=0.21\linewidth]{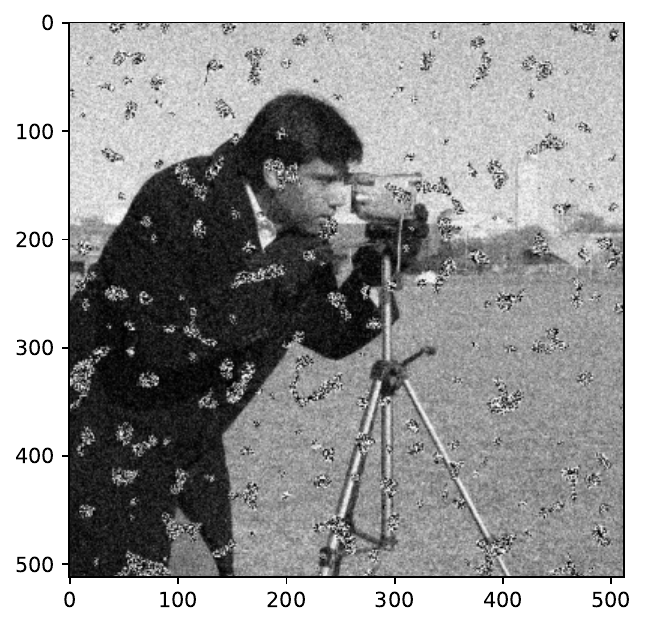}
&
\includegraphics[width=0.25\linewidth]{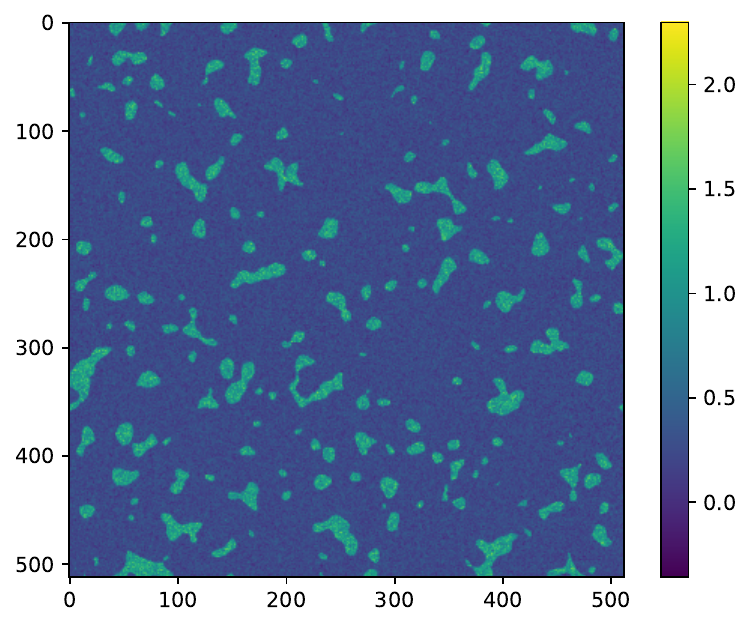}
\\
\rotatebox{90}{\hspace{2em}$\beta=100$}&
\includegraphics[width=0.21\linewidth]{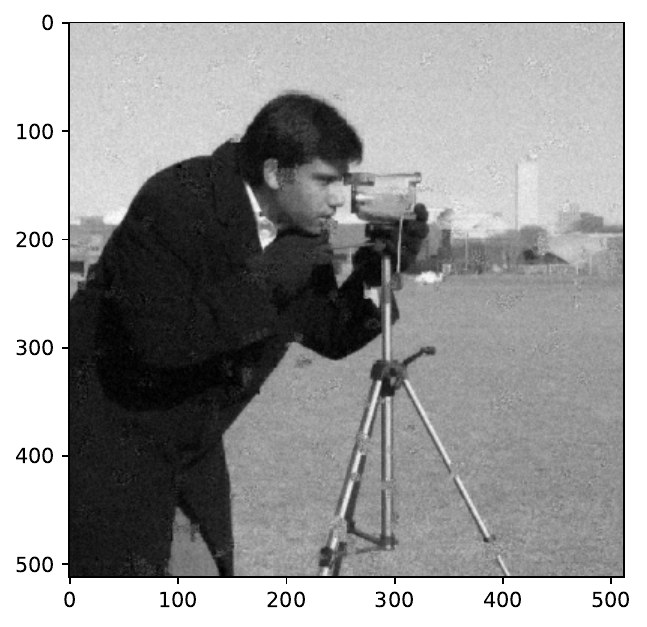}
&\includegraphics[width=0.25\linewidth]{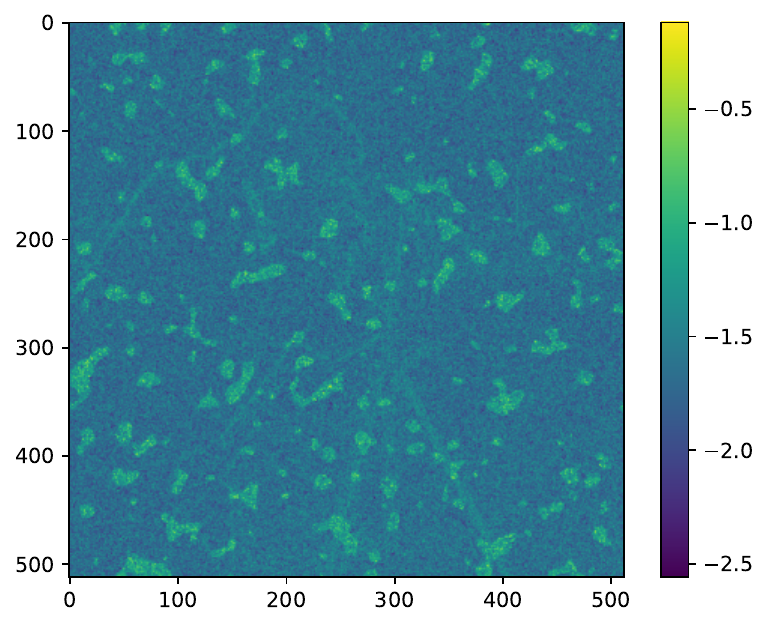}
&
\includegraphics[width=0.22\linewidth]{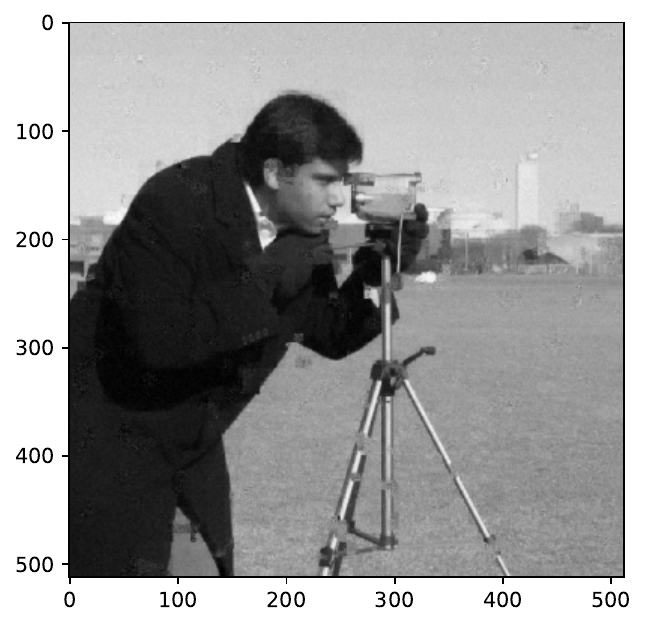}
&
\includegraphics[width=0.25\linewidth]{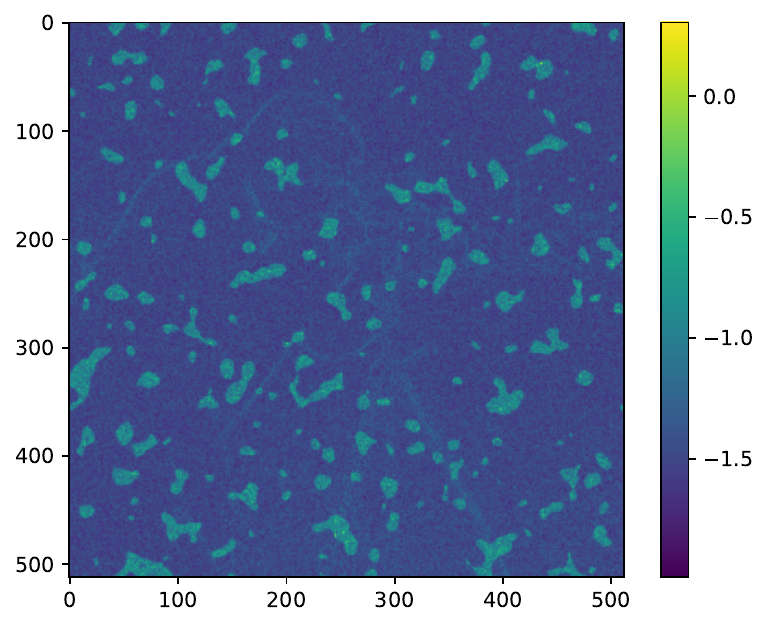}
\\
\rotatebox{90}{\hspace{2em}$\beta=500$}&
\includegraphics[width=0.21\linewidth]{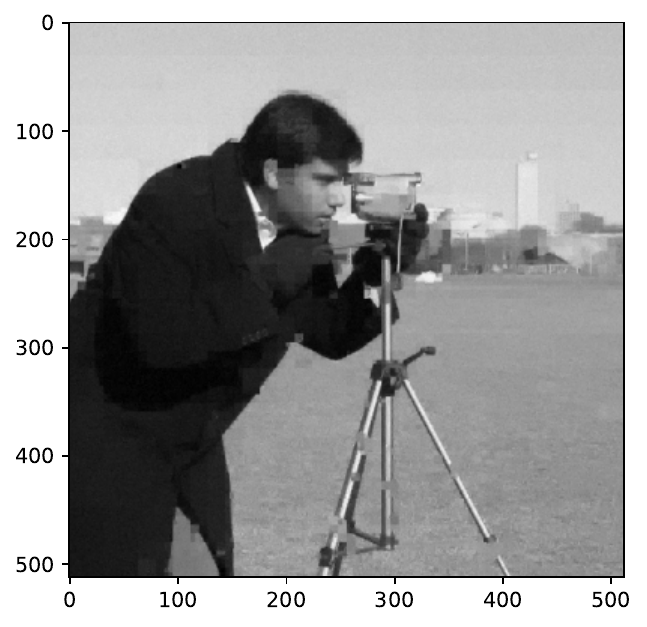}
&\includegraphics[width=0.25\linewidth]{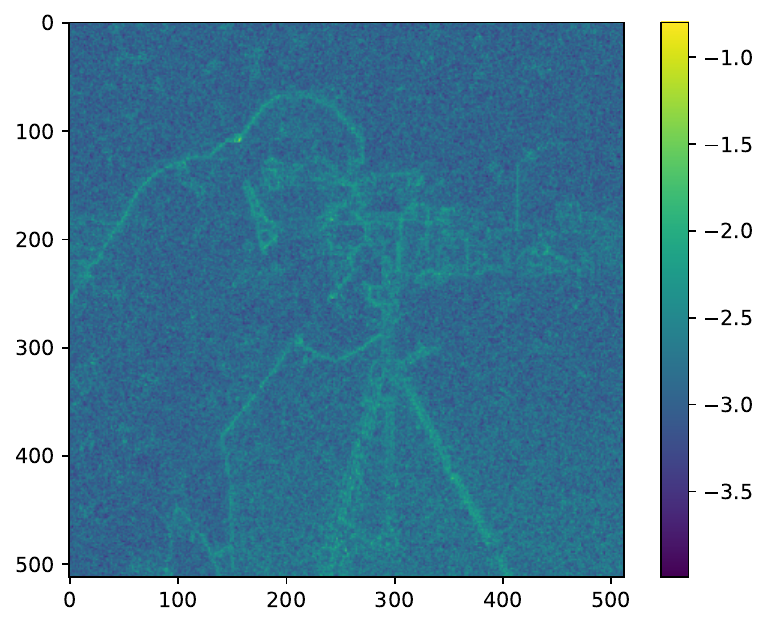}
&
\includegraphics[width=0.21\linewidth]{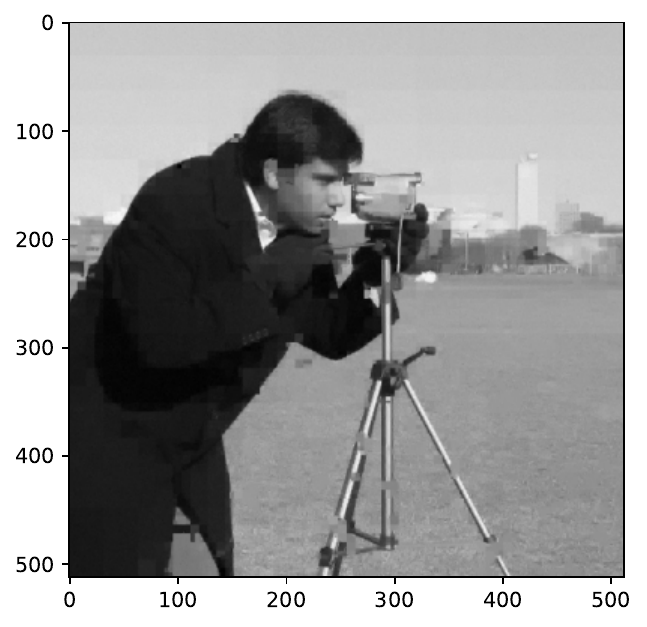}
&
\includegraphics[width=0.25\linewidth]{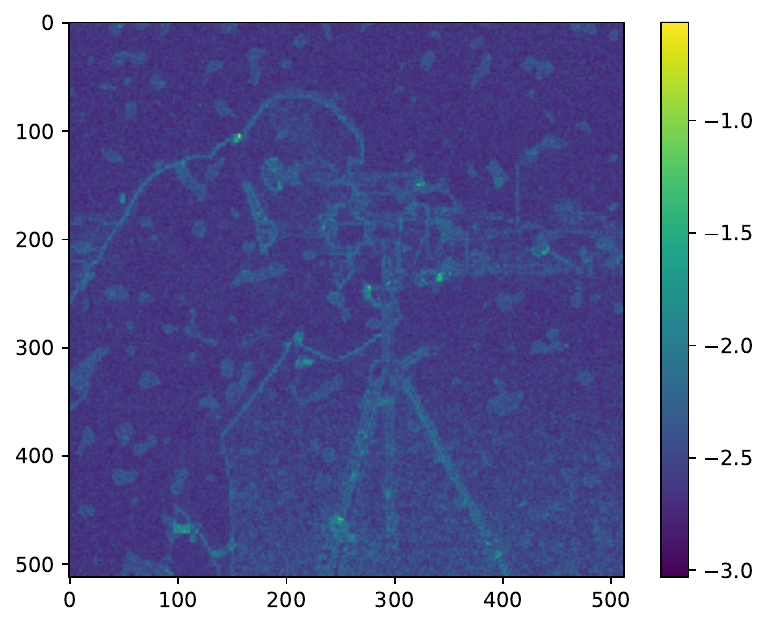}
\end{tabular}    
    \caption{Mean images and  log differences between 95th and 5th quantiles for different choices of $\beta$.}
    \label{fig:inpainting2}
\end{figure}

\end{document}